\documentclass[11pt, reqno]{amsart}

\usepackage{amsfonts}
\usepackage{amsmath}
\usepackage[utf8]{inputenc}
\usepackage[english]{babel}
\usepackage{amsthm}
\usepackage{graphicx}
\usepackage{subfigure}
\usepackage{color}
\usepackage{todonotes}
\usepackage{soul}

\usepackage{enumitem}
\usepackage{amssymb}
\usepackage{amsopn}
\usepackage{comment}
\usepackage{wrapfig}

\usepackage{mathtools}
\usepackage[normalem]{ulem}

\usepackage{pifont,bbding}
\usepackage{comment}
\usepackage{mathrsfs}

\usetikzlibrary{decorations.pathreplacing,arrows.meta,calc,positioning,fit}
\usetikzlibrary{backgrounds} 
\tikzset{%
    >=stealth, 
    nodelayer/.style={},
    edgelayer/.style={},
}

\usepackage{hyperref}

\usepackage{autonum}
\newcommand{\N}{\mathbb{N}}
\newcommand{\R}{\mathbb{R}}

\newcommand{\eps}{\varepsilon}
\newcommand{\oo}{\infty}

\newcommand{\bH}{\mathbb{H}}

\newcommand{\grad}{\nabla}
\newcommand{\hess}{\nabla\nabla}
\newcommand{\gradeps}{\nabla_\varepsilon}

\newcommand{\hesseps}{\nabla_\varepsilon\nabla_\varepsilon}

\newenvironment{bvprob}
  {\left\{\begin{aligned}
  }
  {\end{aligned}\right.}

\newcommand{\cond}[2]{&\quad \text{#1 } #2}

\usepackage{xcolor}
\usepackage{marginfix}
\usepackage{ragged2e}
\usepackage{ifoddpage}

\definecolor{vale}{rgb}{0,0,1}
\definecolor{mattia}{rgb}{0.5, 0.0, 0.5}
\definecolor{gaia}{rgb}{1.0, 0.31, 0.0}

\newcommand{\MarginNote}[3]{%
  \marginpar{%
  \begin{minipage}{.9\marginparwidth}
    \color{#1}%
    \tiny
    \checkoddpage
    \ifoddpage
      \RaggedRight
      $\longleftarrow$ #2:\par
    \else
      \RaggedLeft
      #2: $\longrightarrow$\par
    \fi
    \vspace{1pt}%
    \hspace{0pt}#3%
    \end{minipage}
  }%
}

\newcommand{\noteV}[1]{\MarginNote{vale}{Vale}{#1}}

\theoremstyle{plain}
\newtheorem{thm}{Theorem}[section]
\newtheorem{prop}[thm]{Proposition}
\newtheorem{cor}[thm]{Corollary}
\newtheorem{lem}[thm]{Lemma}
\theoremstyle{definition}
\newtheorem{defn}[thm]{Definition}

\theoremstyle{remark}
\newtheorem{rem}[thm]{Remark}

\numberwithin{equation}{section}

\newlist{checklist}{itemize}{1}
\setlist[checklist]{label=$\square$, leftmargin=*}

\title{Mean-convex Horizontal mean curvature flow in the Heisenberg group}
\author[G.~Bombardieri]{Gaia Bombardieri}
\address[G.~Bombardieri]{Dipartimento di Matematica ``Tullio Levi Civita'', Universit\`a di Padova, via Trieste 63, 35121 Padova (PD), Italy}
\email{gaia.bombardieri@studenti.unipd.it}

\author[M.~Fogagnolo]{Mattia Fogagnolo}
\address[M.~Fogagnolo]{Dipartimento di Matematica ``Tullio Levi Civita'', Universit\`a di Padova, via Trieste 63, 35121 Padova (PD), Italy}
\email{mattia.fogagnolo@unipd.it}

\author[V.~Franceschi]{Valentina Franceschi}
\address[V.~Franceschi]{Dipartimento di Matematica ``Tullio Levi Civita'', Universit\`a di Padova, via Trieste 63, 35121 Padova (PD), Italy}
\email{valentina.franceschi@unipd.it}

\begin{document}

\begin{abstract}
    We study the arrival time equation of the mean-convex horizontal mean curvature flow in the Heisenberg group. We prove existence of 
    viscosity solutions when the initial set is horizontally mean-convex and either non-characteristic, or axisymmetric and with horizontal mean curvature extending continuously across the characteristic set. A key ingredient is an approximation scheme by sets with prescribed Riemannian mean curvature, which preserves the mean-curvature bounds up to an arbitrarily small error. We further show that, when the evolving level sets form a generalized \(C^2\)-foliation, the horizontal mean curvature blows up around characteristic points. This provides an obstruction to self-similar evolution in the axisymmetric class considered above and, in particular, shows that the Pansu sphere cannot evolve self-similarly.
\end{abstract}

\maketitle


\section{Introduction}
Let $\mathbb H$ be the Heisenberg group, identified with $\R^3$ with coordinates $p=(x,y,z)$ and equipped with the \emph{horizontal distribution}
\begin{equation}\label{eq:campi}
    \mathcal{D}=\operatorname{span}\{X,Y\},
    \qquad 
    X = \partial_x - \frac{y}{2}\,\partial_z, 
    \quad 
    Y = \partial_y + \frac{x}{2}\,\partial_z,
\end{equation}
and the sub-Riemannian structure defined as the one making $\{X,Y\}$ an orthonormal frame for $\mathcal D$. In particular, a horizontal vector $\xi=\xi_X X+\xi_Y Y\in\mathcal D$ has norm $|\xi|=\sqrt{\xi_X^2+\xi_Y^2}$.

In this paper, we study mean curvature flow evolutions of surfaces in $\bH$, governed by the horizontal geometry induced throughout the evolution. A key feature of the induced geometry of a surface is its set of characteristic points: if $\Omega\subset\mathbb R^3$ is a bounded open set with $C^2$ boundary, we call \emph{characteristic points} of $\partial\Omega$ those at which the surface is tangent to the distribution, and we set
\begin{equation}
    \mathcal C(\partial\Omega)=\{p\in \partial \Omega : \mathcal D_p\subset T_p\partial\Omega\}.
\end{equation}
At non-characteristic points, the Euclidean outer unit normal admits a non-trivial horizontal projection. After normalization, this projection defines the \emph{horizontal outer unit normal}
$\nu$, a unit vector field in $\mathcal{D}$. The \emph{horizontal mean curvature} of $\partial\Omega$ is then defined as the divergence of this vector field,
\begin{equation}\label{eq:H-intro}
H=\operatorname{div}\nu
\qquad\text{on } \partial\Omega\setminus\mathcal C(\partial\Omega).    
\end{equation}

In the Euclidean setting, the mean curvature flow evolves a smooth hypersurface in its normal direction with normal velocity equal to its mean curvature. A formal horizontal analogue of the classical mean curvature flow is obtained by prescribing that a family of hypersurfaces $\{\partial\Omega_t\}_{t\in[0,T)}$ with $T>0$, parametrized on a reference surface $\Gamma$ by
\begin{equation}
    F:\Gamma\times[0,T)\longrightarrow\mathbb R^3,
    \quad
    \partial\Omega_t=F(\Gamma,t),
\end{equation}
 evolves according to
\begin{equation}\label{eq:HMCF_formal}
\partial_tF(q,t)
=
-H(F(q,t))\nu(F(q,t)),
\end{equation}
with initial datum $\partial\Omega_0=\partial\Omega$. 
This evolution law is not defined at characteristic points, where the horizontal normal is not well defined.
For this reason, weak notions of horizontal mean curvature flow have been developed, for instance via viscosity solutions in \cite{capognacitti09,Ferrariliumanfredi_2014,DirrDragoniVonRenne10,dirrdragonigrande20}, in analogy with the classical level-set formulation developed in~\cite{evansspruck91,ChenGigaGoto1991} or via variational techniques in the recent preprint~\cite{CNP26}, in analogy with the classical minimizing movements scheme developed in~\cite{ATW93,LS95}.

A major difficulty in weak formulations is that fine geometric information along the flow is hard to retain. In this paper, we impose some structure on the initial sets, aiming to obtain more geometric information along the flow. More precisely, we assume that the horizontal mean curvature of the initial set $\Omega$ is  positive and bounded away from zero on the non-characteristic set $\partial \Omega\setminus\mathcal C(\partial \Omega)$. We refer to sets satisfying this condition as \emph{(strictly) horizontally mean convex}.

The appropriate formulation in this context is given in terms of the \emph{arrival time function}, introduced in~\cite{evansspruck91}. Suppose formally that there exists a function $u \in C^2(\mathbb{R}^3)$ such that $\Omega_t=\{ p\in \R^3 : u(p)> t \}$ for $t>0$. 
Thus, $u(p)$ represents the time at which the evolving boundary reaches the point $p$. 
At a regular point of a level set $\partial\Omega_t$ the horizontal outer unit normal is
\begin{equation}
    \nu=-\frac{\nabla u}{|\nabla u|},
\end{equation}
where $\nabla u$ denotes the horizontal gradient, see~\eqref{eq:grad}. Differentiating the identity
$u(F(q,t))=t$ and using \eqref{eq:HMCF_formal}, we formally obtain
\begin{equation}
\label{eq:formal}
1 
= \partial_t\, u(F(q,t))
=  \grad u \cdot (- H(F(q,t)) \, \nu(F(q,t)))
= H (F(q,t))\, |\grad u(F(q,t))|,
\end{equation}
so that, using the explicit expression \eqref{eq:H} for $H$ in terms of $u$, the arrival time function formally satisfies 
\begin{equation}\label{eq:HMCFeq}
   -\Delta u+\frac{\grad\grad u(\grad u,\grad u)}{|\grad u|^2}=1.
\end{equation}
Here $\Delta$ and $\nabla\nabla$ denote, respectively, the horizontal sub-Laplacian and the horizontal Hessian recalled in Section~\ref{ss:Hoperators}. We are therefore led to the boundary value problem
\begin{equation}
    \label{HMCFprobl}\tag{MCF}
    \begin{bvprob}
   -\Delta u+\frac{\grad\grad u(\grad u,\grad u)}{|\grad u|^2}=1 \cond{in}{\Omega},\\
   u=0 \cond{on}{\partial\Omega},
\end{bvprob}
\end{equation}
which is referred to as the \emph{level-set formulation of the horizontal mean curvature flow with initial datum $\Omega$}, understood in the viscosity sense at points where the horizontal gradient vanishes.

Identity \eqref{eq:formal} also explains the role of horizontal mean convexity: wherever the arrival time is smooth and non-characteristic,
\(H={|\nabla u|^{-1}}>0\).
Thus, solving \eqref{HMCFprobl} provides a natural weak notion of mean-convex horizontal mean curvature flow.

\medskip 

Our first result establishes the existence of a mean-convex horizontal mean curvature flow under two alternative geometric assumptions: either the boundary is non-characteristic and has strictly positive horizontal mean curvature, or the domain is axisymmetric with sphere-like boundary and its horizontal mean curvature admits a strictly positive continuous extension across characteristic points; see Section~\ref{s:axisym}.

The existence statement below involves the right-invariant distance $\tilde d$, introduced in Section~\ref{ss:rightinv}, which is the sub-Riemannian distance associated with the right-invariant distribution
\begin{equation}\label{eq:campi_right}
\mathcal{\tilde D}=\operatorname{span}{\tilde X,\tilde Y},
\qquad
\tilde X = \partial_x + \frac{y}{2}\partial_z,
\quad
\tilde Y = \partial_y - \frac{x}{2},\partial_z.
\end{equation}

\begin{thm}
    \label{thm:existence}
    Let $\Omega \subset \R^3$ be a bounded open set with $C^2$ boundary.
    Assume that $\Omega$ satisfies one of the following geometric conditions:
    \begin{itemize}
        \item[(A)] $\Omega$ is axisymmetric, $\partial \Omega$ is homeomorphic
        to a sphere, and the horizontal mean curvature of $\partial\Omega$
        extends continuously to $\partial\Omega$ and is strictly positive
        there.
        \item[(B)] $\partial \Omega$ is non-characteristic and has strictly
        positive horizontal mean curvature.
    \end{itemize}
    Then $\Omega$ admits a mean-convex horizontal mean curvature flow, i.e.,
    there exists a viscosity solution $u \in C^0(\overline{\Omega})$ to
    \eqref{HMCFprobl}, which is right-Lipschitz, i.e., Lipschitz with respect to the right-invariant distance $\tilde d$. 
\end{thm}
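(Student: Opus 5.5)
The plan is to follow the classical strategy of Evans--Spruck for the mean-convex arrival time, via elliptic regularization, adapted to the Heisenberg setting through a Riemannian approximation.

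\textbf{Step 1: approximate the sub-Riemannian operator.} For $\varepsilon>0$, consider the Riemannian metric that makes $\{X,Y,\varepsilon Z\}$ orthonormal, with $Z=\partial_z$. Let $\nabla_\varepsilon$ and $\nabla_\varepsilon\nabla_\varepsilon$ denote its gradient and Hessian. We then solve the regularized problem
\begin{equation}
-\Delta_\varepsilon u+\frac{\nabla_\varepsilon\nabla_\varepsilon u(\nabla_\varepsilon u,\nabla_\varepsilon u)}{\delta^2+|\nabla_\varepsilon u|^2}=1 \ \text{in } \Omega,\qquad u=0 \ \text{on }\partial\Omega,
\end{equation}
which is uniformly elliptic for fixed $\varepsilon,\delta>0$. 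The approximation scheme announced in the abstract is what makes this usable: the scheme produces domains $\Omega_\varepsilon\to\Omega$ whose Riemannian mean curvature is prescribed so that it is at least $c_0-\eta(\varepsilon)$, where $c_0>0$ is the lower bound on $H$. In case (B) one perturbs the boundary near nothing characteristic, so $H_\varepsilon\to H$ uniformly. In case (A) one uses axisymmetry and the continuous positive extension of $H$ at the two poles. On these domains the standard Leray--Schauder argument gives smooth solutions $u^{\varepsilon,\delta}$, since mean convexity provides the boundary barrier.

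\textbf{Step 2: uniform estimates.} The $C^0$ bound comes from comparison with a large Korányi-type or quadratic supersolution. The boundary gradient bound comes from the barrier $\psi=\lambda\,\mathrm{dist}$, which works because the mean curvature of $\partial\Omega_\varepsilon$ is bounded below uniformly. The key interior estimate is a Lipschitz bound with respect to the \emph{right-invariant} distance. Right-invariant vector fields $\tilde X,\tilde Y,Z$ commute with the left-invariant $X,Y$, so they generate left translations' counterpart symmetries of the equation. Hence $w=\tilde Xu$ (and similarly $\tilde Yu$, $Zu$) satisfies the linearized equation, which has no zeroth-order term. The maximum principle then yields $\sup_\Omega|\tilde X u|\le \sup_{\partial\Omega}|\tilde Xu|$, and the boundary values are controlled by the gradient bound of the barrier. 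This gives a uniform right-Lipschitz bound, independent of $\varepsilon$ and $\delta$. Because $\tilde X,\tilde Y$ bracket-generate, right-Lipschitz estimates also give H\"older equicontinuity in the Euclidean sense.

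\textbf{Step 3: pass to the limit.} By Arzelà--Ascoli, letting $\delta,\varepsilon\to0$ along a diagonal sequence gives $u\in C^0(\overline\Omega)$, right-Lipschitz with $u=0$ on $\partial\Omega$. Stability of viscosity solutions, in the form with semicontinuous envelopes handling the singularity at $\nabla u=0$ as in \cite{capognacitti09}, shows that $u$ solves \eqref{HMCFprobl}.

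\textbf{Main obstacle.} I expect the hardest step to be the boundary gradient estimate uniform in $\varepsilon$. It is needed both for the barrier and for the boundary values of $\tilde X u$. In case (A) the Riemannian mean curvature at characteristic points must stay bounded below even as $\varepsilon\to0$. There I would use the axisymmetric profile and the continuous extension of $H$ to build the prescribed-curvature approximants explicitly near the poles.
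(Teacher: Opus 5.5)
\textbf{Gap: converting a boundary gradient bound into a bound on $\tilde X u,\tilde Y u$.}
Your interior step is sound and is a legitimate variant of the paper's. Since $\tilde X,\tilde Y$ commute with $X,Y,Z$, differentiating the regularized equation shows that $\cos\theta\,\tilde Xu+\sin\theta\,\tilde Yu$ solves a linear elliptic equation with no zeroth-order term. Hence $\sup_{\Omega_\varepsilon}|\tilde\nabla u|\le\sup_{\partial\Omega_\varepsilon}|\tilde\nabla u|$, which is what the paper gets from the right Bochner identity (Proposition~\ref{prop:bochnermia}).

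The gap is the sentence ``the boundary values are controlled by the gradient bound of the barrier.'' A uniform bound $H_\varepsilon\ge c_0-\eta$ controls at best the \emph{left} Riemannian gradient $|\nabla_\varepsilon u|$ on $\partial\Omega_\varepsilon$; in the limit $\delta\to0$ one has $|\nabla_\varepsilon u_\varepsilon|=1/H_\varepsilon$ there. It does not control $\tilde Xu,\tilde Yu$. To see this, recall $\tilde X=X+y\partial_z$ and $\tilde Y=Y-x\partial_z$. Since $u$ is constant on the boundary, $du=\mu\,dg$ there for a defining function $g$. At a characteristic point $p_0$ one gets $|\nabla_\varepsilon u|=\varepsilon|\mu||Zg|$ but $|\tilde\nabla u|=r(p_0)|\mu||Zg|$. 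That is, $|\tilde\nabla u(p_0)|=r(p_0)/(\varepsilon H_\varepsilon(p_0))$, which is unbounded as $\varepsilon\to0$ unless $p_0$ lies on the axis. Your Step~2 never uses axisymmetry or non-characteristicity, so as written it would prove a uniform right-Lipschitz bound for any characteristic domain with mean-convex approximants, which this computation rules out.

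\textbf{The barrier does not repair this in case (A).} A barrier built on the Euclidean distance $d_E$ involves $|\nabla_\varepsilon d_E|\,H_\varepsilon$ rather than $H_\varepsilon$. At the poles $|\nabla_\varepsilon d_E|=\varepsilon$, so the resulting bound $\lambda g'(0)$ on $\partial_{\nu_E}u$ is of order $1/\varepsilon$. This is not a weakness of the barrier: the Euclidean normal derivative there really is $1/(\varepsilon H_\varepsilon)$.

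\textbf{The missing case-specific conversion}, which the paper performs:
\begin{itemize}
\item In case (A), $u^\delta_\varepsilon$ is rotation invariant, by uniqueness together with rotation invariance of the operator and of $\Omega_\varepsilon$. Hence $|\tilde\nabla_\varepsilon u|=|\nabla_\varepsilon u|$ pointwise (Remark~\ref{rem:rotinvariancegrad}), and the boundary value becomes $1/H_\varepsilon\le 1/(c_0-\eta)$. This is also why the prescribed-curvature approximants must be axisymmetric, not merely $g_\varepsilon$-mean convex.
\item In case (B), the barrier $\lambda g(d_E)$ is uniform in $\varepsilon$ precisely because non-characteristicity gives $|\nabla_\varepsilon d_E|\ge C^{-1}$ near $\partial\Omega$. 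It then bounds $\partial_{\nu_E}u$, and $\tilde\nabla_\varepsilon u=(\partial_{\nu_E}u)\,\tilde\nabla_\varepsilon d_E$ is bounded on $\partial\Omega$.
\end{itemize}
Without this case split, the uniform right-Lipschitz bound, and hence the compactness in Step~3, does not follow.
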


We note that in case $(A)$, right-Lipschitz continuity is equivalent to left-Lipschitz continuity, namely Lipschitz continuity with respect to the sub-Riemannian distance $d$; see Remark~\ref{rem:rotinvariancegrad}. 

The two assumptions in Theorem~\ref{thm:existence} are independent. The Pansu sphere, described below, satisfies assumption $(A)$ and has two characteristic points.
Conversely, assumption $(B)$ allows for different topologies and does not require axisymmetry. To illustrate this, consider axisymmetic tori, which always have empty characteristic sets but need not to be horizontally mean convex. For instance, 
the tori 
${T} = \big\{(x,y,z) : (r(x,y)-r_0)^2+z^2=a^2\big\}$, $0<a<r_0$ 
are horizontally mean convex if the ratio $a/r_0$ is sufficiently small.
Moreover, sufficiently small $C^2$ perturbations of such tori still satisfy assumption $(B)$, providing a rich family of non-axisymmetric tori to which the theorem applies.

Besides establishing existence for this variety of initial sets, Theorem~\ref{thm:existence} yields an evolution that is \emph{non-fattening}, in the sense that
$\operatorname{int}\{u = t\} = \emptyset$ for every $t > 0$.
Indeed, \eqref{eq:HMCFeq} is manifestly incompatible with $u$ being constant on an open set, and
 in fact, testing the supersolution condition~\eqref{eq:supersol_eta}
esasily shows it.

\medskip 

Our proof of existence for~\eqref{HMCFprobl} relies on an approximation procedure.
A key technical step, which may be of independent interest, consists in approximating $\Omega$ by $C^2$ sets whose mean-curvature bounds are preserved, up to an arbitrarily small error, in a family of approximating Riemannian manifolds. These sets will serve as initial data for Riemannian strictly mean-convex mean curvature flows approximating the sub-Riemannian evolution, see Section~\ref{ss:approxsol}.

For $\varepsilon>0$, let $g_\varepsilon$ be the Riemannian metric making
\begin{equation}
\label{eq:campi_eps}
X_\varepsilon=X,
\qquad
Y_\varepsilon=Y,
\qquad
Z_\varepsilon=\varepsilon\partial_z
\end{equation}
an orthonormal frame, and set $\mathbb H_\varepsilon=(\R^3,g_\varepsilon)$. As $\varepsilon\to0^+$, the $g_\varepsilon$-mean curvature $H_\varepsilon$ of $\partial\Omega$ converges to the horizontal mean curvature $H$ away from the characteristic set $\mathcal C(\partial\Omega)$. Thus, under assumption $(B)$, $H_\varepsilon$ is already close to $H$ for sufficiently small $\varepsilon$. 

In case $(A)$, instead, horizontal mean convexity does not in general imply strict mean convexity in $\mathbb H_\varepsilon$. This obstruction already appears for the most natural model surface: the model constant mean curvature sphere $\bH$, known as the \emph{Pansu sphere}~\cite{Pansu82}. For $R>0$, the Pansu sphere of radius $R$ bounds the set
\begin{equation}
\label{eq:pansub}
\mathcal P_R
=
\left\{
p=(x,y,z)\in\R^3:
|z|<f_R(r(p)),\  r(p)<R
\right\},
\end{equation}
where
\begin{equation}
f_R(r)
=
\frac{1}{4}\left(
R^2\arccos\left(\frac{r}{R}\right)
+
r\sqrt{R^2-r^2}
\right)
.
\end{equation}
Its boundary has two characteristic points, corresponding to $r=0$, and a direct computation gives
\begin{equation}
\label{eq:H_Pansu}
H=\frac{2}{R}
\qquad
\text{on }
\partial\mathcal P_R
\setminus
\mathcal C(\partial\mathcal P_R).
\end{equation}
Nevertheless, its $g_\varepsilon$-mean curvature is
\begin{equation}
\label{eq:Heps_Pansu}
H_\varepsilon
=
\frac{
2r\bigl(6R^2\varepsilon^2+r^2(R^2-4\varepsilon^2)\bigr)
}{
\bigl(4R^2\varepsilon^2+r^2(R^2-4\varepsilon^2)\bigr)^{3/2}
}.
\end{equation}
In particular, although $\partial\mathcal P_R$ has constant positive horizontal mean curvature away from its characteristic points, its $g_\varepsilon$-mean curvature vanishes at those points for every $\varepsilon>0$. 

We therefore construct a suitable perturbation $\Omega_\varepsilon$ by prescribing its $g_\varepsilon$-mean curvature in terms of the horizontal mean curvature of $\partial\Omega$. We refer to this procedure as \emph{approximation by prescribed mean curvature}.
\begin{thm}
    \label{thm:approx}
Let $\Omega$ be a bounded open $C^2$-axisymmetric set, homeomorphic to a sphere, such that $H(\partial\Omega)$ extends continuously to $\mathcal C(\partial\Omega)$.
Let $c_1, c_2 \in \R$ be such that the continuous horizontal mean curvature $H$ of $\partial \Omega$ satisfies $c_1 \leq H \leq c_2$ on $\partial \Omega$. Then, for any $\varepsilon>0$, there exists a bounded open axisymmetric set $\Omega_\varepsilon \subset \R^3$ with $C^2$-boundary such that $\partial \Omega_\varepsilon \to \partial \Omega$ in Hausdorff distance as $\varepsilon\to 0$.
Moreover, for any $\eta>0$ there exists $\bar \varepsilon>0$ such that the $g_\varepsilon$-mean curvature of $\partial\Omega_\varepsilon$ satisfies $c_1 - \eta \leq H_\varepsilon \leq c_2 + \eta$ for $\varepsilon<\bar \varepsilon$. 
\end{thm}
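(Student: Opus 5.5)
We reduce everything to the generating curve in the half-plane $\{(r,z): r\ge 0\}$ and build $\partial\Omega_\varepsilon$ by solving a prescribed-curvature ODE. Since $\Omega$ is axisymmetric and $\partial\Omega$ is a $C^2$ sphere, $\partial\Omega$ is obtained by rotating a $C^2$ curve $\gamma$ that meets the $z$-axis exactly at the two characteristic points (the poles). Rotations about the $z$-axis are isometries of both the sub-Riemannian structure and of $g_\varepsilon$. Hence, for a rotational surface generated by a curve parametrized by Euclidean arclength $s\mapsto (r(s),z(s))$ with angle $\theta(s)$, $r'=\cos\theta$, $z'=\sin\theta$, both $H$ and $H_\varepsilon$ are explicit functions of $(r,\theta,\theta')$.

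We first compute these two curvatures for a rotational surface. The formula for $H$ will be an expression of the form $H = A(r,\theta)\,\theta' + B(r,\theta)$. Near the axis, the surface is $z=\phi(r)$, and we write the corresponding formula through $\phi',\phi''$. Likewise,
\[
H_\varepsilon = A_\varepsilon(r,\theta)\,\theta' + B_\varepsilon(r,\theta),
\]
with $A_\varepsilon\to A$ and $B_\varepsilon\to B$ locally uniformly away from characteristic directions. Formula \eqref{eq:Heps_Pansu} for the Pansu sphere serves as a consistency check.

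The core step is to define $\gamma_\varepsilon$ as the solution of the ODE $H_\varepsilon(\gamma_\varepsilon)=h(\gamma_\varepsilon)$, where $h$ is a continuous extension of $H$ (defined on $\partial\Omega$) to a neighbourhood, for instance constant along normal segments, so that $c_1\le h\le c_2$ near $\partial\Omega$. Solving $\theta'=(h-B_\varepsilon)/A_\varepsilon$ starting from the lower pole with horizontal tangent ($\theta=0$ at $r=0$), we obtain a curve that meets the axis orthogonally. By Cauchy–Peano, together with a symmetric singular-ODE argument at $r=0$ of the type used for rotational CMC surfaces, this curve is $C^2$. Along $\gamma_\varepsilon$ we then have exactly $H_\varepsilon=h\in[c_1-\eta,c_2+\eta]$.

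The main obstacle is closing the curve. The shooting from the lower pole must reach the axis again, at the upper pole, orthogonally, and must stay Hausdorff-close to $\gamma$. We would prove that $\gamma_\varepsilon\to\gamma$ uniformly as $\varepsilon\to0$ by a stability argument split into two regions.

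On compact pieces away from the poles, $A_\varepsilon$ and $B_\varepsilon$ converge in $C^0$, and $\gamma$ itself solves the limiting ODE $H=h$. Continuous dependence of ODE solutions then gives convergence there. Uniqueness of the limiting solution may fail because $h$ is only continuous; we would handle this either by taking $h$ Lipschitz, after mollifying $H$ and absorbing the error into $\eta$, or by a compactness argument.

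Near the poles, where the characteristic behaviour makes $A_\varepsilon$ degenerate, we use the graph form $z=\phi_\varepsilon(r)$. The equation becomes a first-order ODE for $\phi_\varepsilon'$ in divergence form, $(r\,G_\varepsilon(r,\phi'))'=r\,h$. Integrating this gives an explicit bound $|\phi_\varepsilon'|\lesssim r$ uniformly in $\varepsilon$, which shows closeness to $\gamma$ near the axis.

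To close exactly at the upper pole, we would shoot from both poles and match the two curves at the equator, using a one-parameter adjustment: a small vertical translation together with a local modification of $h$ by at most $\eta$ near the equator to fix the angle. This gives a closed $C^2$ curve. Rotating it produces $\Omega_\varepsilon$, which is axisymmetric, satisfies the curvature bounds $c_1-\eta\le H_\varepsilon\le c_2+\eta$, and has $\partial\Omega_\varepsilon\to\partial\Omega$ in Hausdorff distance.
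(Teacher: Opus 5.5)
Your route (arclength shooting with a position-dependent datum $h$) differs from the paper's, and as written it has two genuine gaps.

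\emph{Convergence to $\gamma$.} With $h=h(r,z)$ only continuous, the limit problem $H=h$ issued from the pole need not have $\gamma$ as its only solution. Compactness then gives convergence of $\gamma_\varepsilon$ to \emph{some} solution, not necessarily to $\gamma$, so the Hausdorff convergence to $\partial\Omega$ is not established. Mollifying does not cure this. For Lipschitz $h_\mu$ you get $\gamma_\varepsilon\to\gamma^{h_\mu}\neq\gamma$, and as $\mu\to0$ you face the same non-uniqueness. A Gronwall estimate only gives $\|h-h_\mu\|_\infty e^{L_\mu T}$, which need not tend to $0$.

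\emph{Closing.} A $C^2$ junction of the two shots requires matching both coordinates and the tangent angle, and then also the curvatures.
\begin{itemize}
\item A vertical translation fixes only $z$.
\item The radial mismatch (e.g.\ between the radii at which the two shots become vertical) is addressed by neither of your adjustments.
\item Since $h$ depends on $z$, the translated shot satisfies $H_\varepsilon=h(r,z-c)$, so the curvature of the glued profile generally jumps at the junction.
\end{itemize}
Both gaps are repairable: prescribe the curvature as a function of the parameter along $\gamma$ rather than of position, close with a multi-parameter family of curvature bumps of size $o(1)$, and choose diagonally so that $\Omega_\varepsilon$ does not depend on $\eta$. But that is a substantial argument you have not supplied.

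The paper avoids all of this through its choice of datum. It cuts $\partial\Omega$ at its vertical points into radial graphs $z=f_j(r)$; a polynomial approximation of $\gamma_1$ away from the poles makes these finitely many. On each piece it prescribes $H_\varepsilon$ at radius $r$ to equal $\varphi_j(r)$, the horizontal curvature of the same piece at the same radius. Both equations are then in divergence form with the same right-hand side $-\sigma_j r\varphi_j(r)$. This is exactly the structure you noticed near the pole, here used globally. They therefore share the first integral
\[
\frac{rf_\varepsilon'}{\sqrt{f_\varepsilon'^2+\frac{r^2}{4}+\varepsilon^2}}=\frac{rf_j'}{\sqrt{f_j'^2+\frac{r^2}{4}}},
\qquad\text{i.e.}\qquad
f_\varepsilon'=f_j'\sqrt{1+\tfrac{4\varepsilon^2}{r^2}}.
\]
Consequences:
\begin{itemize}
\item $f_\varepsilon'$ vanishes at the axis and blows up at exactly the same junction radii as $f_j'$, so the pieces glue by choosing additive constants. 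There is no shooting and no closing problem.
\item $C^2$ regularity at the poles and junctions is checked directly.
\item Uniform convergence follows by dominated convergence.
\item $H_\varepsilon$ takes exactly the curvature values of the approximating set $\Omega^n$, which lie in $[c_1-\frac1n,c_2+\frac1n]$; a diagonal choice $\varepsilon_n\to0$ finishes the proof.
\end{itemize}
If completed, your approach would avoid needing finitely many vertical points. The paper's approach buys an explicit solution with no closing problem at all.
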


Theorem~\ref{thm:approx} covers assumption $(A)$ of Theorem~\ref{thm:existence}.
Axisymmetry of $\Omega$ is used here to reduce the prescribed \(g_\varepsilon\)-mean curvature equation to an ODE. Solving this equation yields the generating profiles of the approximating sets. This construction is carried out in Section~\ref{s:axisym}.

We are finally interested in understanding whether, in the distinguished setting given by assumption $(A)$ of Theorem~\ref{thm:existence}, the horizontal mean curvature flow admits self-similar solutions. 
Here self-similarity is naturally understood with respect to the anisotropic dilations $\{\delta_\lambda\}_{\lambda>0}$ defined as 
\begin{equation}
\label{eq:dilations}
    \delta_\lambda(x,y,z)=(\lambda x,\lambda y,\lambda^2 z),\quad (x,y,z)\in\R^3,\quad \lambda>0.
\end{equation} 

\begin{defn}[Self-similar solution]\label{defn:ss}
Let $\Omega\subset\R^3$ be a bounded open set with $C^2$ boundary, and let $u\in C(\overline{\Omega})$ be a viscosity solution to~\eqref{HMCFprobl} with initial datum $\Omega$. Let $T^\ast=\max_{\bar{\Omega}} u>0$ denote its extinction time. We say that $u$ is a \emph{self-similar solution} if there exists a continuous function
\begin{equation}\label{eq:selfsimilar-scaling-function}
    \lambda:[0,T^\ast)\longrightarrow(0,1],
    \qquad
    \lambda(0)=1, \qquad \lambda(t)\xrightarrow[t\to T^\ast]{}0
\end{equation}
such that for any $t\in[0,T^*)$, the level set 
\begin{equation}
    \label{eq:Gammat}
    \Gamma_t=\bigl\{
        p\in\overline{\Omega}:u(p)=t
    \bigr\},
\end{equation}
satisfies
\begin{equation}\label{eq:defselfsimlambda}
    \Gamma_t
    =
    \delta_{\lambda(t)}(\partial\Omega).
\end{equation}
\end{defn}

In the Euclidean setting, the round sphere is simultaneously an isoperimetric set and a self-similarly shrinking solution to mean curvature flow. More generally, mean curvature flow is well known to have strong connections with the isoperimetry problem; see, for instance~\cite{RitoreSinestrari10} for an account, and \cite{huisken_isoperimetricconceptmassgeneral_2006, jauregui_lowersemicontinuitymass0_2019, agostiniani_sharpgeometricinequalitiesclosed_2020} for other applications of such connection. It is then natural to ask whether a similar picture holds in the Heisenberg group.
The Pansu sphere is the natural candidate for such a role due to its isoperimetric properties~\cite{RitoreRosales08,Monti08,CKL26-H1}. In fact, characteristic points obstruct self-similar evolution through any right-Lipschitz solution. 
\begin{thm}\label{cor:noselfsim}
Let $\Omega\subset\R^3$ be a bounded open set with $C^2$ boundary satisfying assumption $(A)$ of Theorem~\ref{thm:existence}. Then $\Omega$ cannot evolve self-similarly through a right-Lipschitz (or equivalently left-Lipschitz) solution of the mean-convex horizontal mean curvature flow. In particular, the Pansu sphere does not evolve self-similarly.
\end{thm}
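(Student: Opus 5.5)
We argue by contradiction: suppose $u\in C^0(\overline\Omega)$ is a right-Lipschitz viscosity solution of \eqref{HMCFprobl} and that $\lambda$ satisfies \eqref{eq:selfsimilar-scaling-function}--\eqref{eq:defselfsimlambda}. The plan is to show that self-similarity forces $u$ to be, in a suitable sense, a $C^2$ function near the characteristic points whose level sets foliate $\overline\Omega$ minus the origin. We then invoke the blow-up result announced in the abstract: along a generalized $C^2$-foliation by level sets, the horizontal mean curvature blows up near characteristic points. Under assumption (A), however, the level sets $\delta_{\lambda(t)}(\partial\Omega)$ have horizontal mean curvature $H_t=\lambda(t)^{-1}H\circ\delta_{\lambda(t)}^{-1}$. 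This follows because $X,Y$ are homogeneous of degree $1$ under $\delta_\lambda$, so $H$ scales like an inverse length. Consequently $H_t$ extends continuously and stays bounded on each $\Gamma_t$ for $t<T^\ast$, which will give the contradiction.

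The first step is to extract the structure of $u$. Since the sets $\Gamma_t$ are pairwise disjoint and $\Gamma_t=\delta_{\lambda(t)}(\partial\Omega)$, the map $\lambda$ must be injective. Being continuous, it is therefore strictly decreasing. Moreover $\Omega$ must be star-shaped with respect to $\delta$, so that $\{\delta_\mu(\partial\Omega)\}_{\mu\in(0,1]}$ foliates $\overline\Omega\setminus\{0\}$. By axisymmetry the two characteristic points lie on the $z$-axis, and they are mapped to characteristic points of $\Gamma_t$. We then have $u(\delta_\mu q)=\lambda^{-1}(\mu)$ for $q\in\partial\Omega$. The right-Lipschitz bound, equivalently left-Lipschitz by Remark~\ref{rem:rotinvariancegrad}, then gives a quantitative control on $\lambda^{-1}$. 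Comparing two nearby leaves, whose sub-Riemannian distance is comparable to $|\mu-\mu'|$ away from the origin, shows that $\lambda^{-1}$ is Lipschitz.

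The second step is to use the equation on the smooth region to determine $\lambda$. At non-characteristic points where $|\nabla u|\ne0$, the viscosity equation holds classically, so \eqref{eq:formal} gives $H_t|\nabla u|=1$. This means that the normal speed of the leaves equals $H_t=\lambda^{-1}H\circ\delta_\lambda^{-1}$. The normal velocity of the family $\delta_{\lambda(t)}(\partial\Omega)$ is $\lambda'(t)$ times the horizontal-normal component of the generator of dilations $\partial_\mu\delta_\mu$, which is homogeneous of degree $1$. Equating the two expressions yields $-\lambda\lambda'\,\langle x\partial_x+y\partial_y+2z\partial_z,\nu\rangle\circ\delta_\lambda=H$ on $\partial\Omega\setminus\mathcal C$. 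Hence $\lambda\lambda'$ is a negative constant, so $\lambda(t)=\sqrt{1-t/T^\ast}$, and $\partial\Omega$ satisfies a self-shrinker equation $c\,\langle \mathcal{Z},\nu\rangle=H$, where $\mathcal Z$ denotes the dilation generator.

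The third step, which I expect to be the main obstacle, is to reach a contradiction at the characteristic points. The horizontal projection of $\mathcal Z$ is $x\partial_x+y\partial_y+2z\partial_z$ paired with $\nu$. At a characteristic point $p=(0,0,\pm z_0)$ with $z_0\neq0$, the horizontal component of $\mathcal Z$ is $xX+yY$ plus a $z$-contribution that does not vanish. Near $p$, the horizontal normal rotates through all directions, because axisymmetry makes $\nu$ asymptotically radial. So $\langle\mathcal Z,\nu\rangle$ near $p$ behaves like $\pm z_0$ times the horizontal normal of the $\partial_z$-direction, roughly $-\tfrac{z_0}{2}(x\nu_Y-y\nu_X)\cdot$ (singular factor). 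One must check, using the axisymmetric profile $z=f(r)$ with $f'(0)=0$ and continuity of $H$ (which forces $f''$ to behave like that of the Pansu profile), that this quantity either degenerates to $0$ or is inconsistent with $H(p)>0$. If this direct computation is delicate, I would instead use $|\nabla u|=1/H_t$: it is bounded away from zero and infinity near $p$ on the leaves. On the other hand, the spacing between the leaves $\delta_\mu(\partial\Omega)$ at $p$ is purely vertical, of order $|\mu^2-\mu'^2|z_0$. This forces $|\nabla u|\to\infty$ or $H_t\to\infty$ approaching $p$, which contradicts the boundedness of $H_t$. This is exactly the blow-up phenomenon for $C^2$-foliations. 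For the Pansu sphere, condition (A) holds with $H=2/R$, so the general statement applies.
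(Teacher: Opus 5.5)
Your top-level plan (self-similarity $\Rightarrow$ foliation by dilates of $\partial\Omega$ $\Rightarrow$ blow-up of $H$ near characteristic points, against the scaling bound $H_t\le c/\lambda(t)$) matches the paper's. The trouble is in Steps 2 and 3, where you try to do the work by hand. There is a genuine gap there. You assert that at non-characteristic points ``the viscosity equation holds classically''. From this you derive $H_t|\nabla u|=1$, the ODE $\lambda\lambda'=\mathrm{const}$, and, in your fallback, the bound on $|\nabla u|$ near the characteristic point. But $u=\lambda^{-1}\circ R$, where $R$ is $C^2$ only where dilations are transversal. Moreover, $\lambda^{-1}$ is only known to be continuous, strictly decreasing and (by your Step 1) Lipschitz. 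A continuous viscosity solution satisfies the equation pointwise only where it is twice differentiable. Your identities therefore require second-order information on $\lambda^{-1}$ that nothing in the hypotheses provides. Step 3 is also left undecided. In your first route the relevant quantity is $\langle D,dg\rangle/|\nabla g|=2z\,Zg/|\nabla g|$, and it blows up at a characteristic point with $z\neq0$ rather than degenerating. So if $\lambda^{-1}$ were $C^2$, your self-shrinker equation would contradict continuity of $H$ at once. Your second route is heuristic and again uses the classical identity. Note also that near the characteristic point one gets $|\nabla u|\to0$, not $\to\infty$.

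The missing idea is to use only one-sided viscosity inequalities for $u=f\circ R$ with $f=\lambda^{-1}$. Inf-convolution of $f$ gives $C^2$ functions $\psi_n$ touching $f$ from below at levels $r_n\to r_0$, with $0<|\psi_n'(r_n)|\le L$. The derivative is nonzero because the inf-convolution of a strictly monotone function is never constant on an interval. The upper bound $L$ is the Lipschitz constant of $f$, and this is exactly where right-Lipschitz continuity of $u$ enters. Next, since the characteristic points of $\partial\Omega$ lie off the origin, $\langle D,\nabla_{\mathrm{eucl}}g\rangle=2z\,g_z\neq0$ there. Hence $R$ is $C^2$ with $\nabla_{\mathrm{eucl}}R\neq0$ near $p_{\mathrm{char}}\in\mathcal C(\Gamma_t)$, and $\nabla R(p_{\mathrm{char}})=0$. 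Testing the supersolution property with $\psi_n\circ R$ at non-characteristic $p_n\in\Sigma_{r_n}$ with $p_n\to p_{\mathrm{char}}$ gives $|H(p_n)|\ge 1/(L|\nabla R(p_n)|)\to\infty$. This contradicts the bound $H\le c/\lambda_{\min}$ coming from scaling. This argument is the content of the paper's blow-up theorem for generalized $C^2$-foliations. If you cite that theorem, as your first paragraph suggests, you still must check that the characteristic points lie in its regular set $A$, which is exactly the transversality computation above. Determining $\lambda$ is not needed.
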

The failure of the Pansu sphere in fulfilling natural isoperimetric-related properties was recently observed also in \cite{fogagnolo-pinamonti-verzellesi}.

At the core of Theorem~\ref{cor:noselfsim} lies the following key observation: if $u\in C^0(\bar \Omega)$ is a solution to \eqref{HMCFprobl} whose level sets $\Gamma_t$ are regular enough, then  near every characteristic point belonging to a positive-time level, the horizontal mean curvature becomes unbounded along a sequence of regular points. We call this phenomenon \emph{instantaneous curvature blow-up} and make it rigorous in Theorem~\ref{thm:hmcfoliation}.

This blow-up can also be explicitly observed along the evolution of the Kor\'anyi sphere.  
This is the boundary of the set $K_R=\{p\in\R^3:k_R(p)<0\}$, $R>0$ where
\begin{equation}\label{eq:koranyi-ball-intro}
    k_R(x,y,z)
    =
    (x^2+y^2)^2+16z^2-R^4.
\end{equation}
In terms of the radial distance $r$, its 
horizontal mean curvature is given by 
\begin{equation}\label{eq:H_Korany}
H_{\partial K_R}
=
\frac{3r}{R^2},
\end{equation}
away from the characteristic set, which consists of the two poles. 
Although $H_{\partial K_R}> 0$ at every non-characteristic point, Theorem~\ref{thm:existence} does not apply to $\Omega= K_R$, since the curvature tends to zero when approaching the characteristic points. 

Nevertheless, in \cite{Ferrariliumanfredi_2014}, the authors show that the unique axisymmetric solution to 
\begin{equation}\label{eq:HMCF-evo-gen}
    \begin{bvprob}
        \partial_t w =\Delta w -\frac{\grad\grad w(\grad w,\grad w)}{|\grad w|^2}
        \cond{in}{\R^3\times(0,+\infty)},
        \\
        w(\cdot,0)=k_R
        \cond{in}{\R^3},
    \end{bvprob}
\end{equation}
understood in the viscosity sense at points where $\grad w=0$, is given by
\begin{equation}\label{eq:koranyi-evo}
    w(p,t)
    =
    (x^2+y^2)^2
    +
    12t(x^2+y^2)
    +
    16z^2
    +
    12t^2
    -
    R^4.
\end{equation}
The horizontal mean curvature of the corresponding evolved level sets $\Sigma_t=\{w(\cdot,t)=0\}$ for $0\leq t<T_R^*$ and $T_R^*=\frac{R^2}{\sqrt{12}}$ is given by 
\begin{equation}\label{eq:koranyi-evo_H}
    H_{\Sigma_t}
    =
    \frac{\partial_t w}{|\grad w_t|}
    =
    \frac{12(x^2+y^2)+24t}{|\grad w_t|}.
\end{equation}
As a point of $\Sigma_t$ approaches one of the characteristic poles, its horizontal gradient vanishes, making $H_t$ blow up for any $t>0$, unlike the case $t=0$. 
Thus, although the horizontal mean curvature of the initial Korányi sphere vanishes at its characteristic points, the evolved hypersurfaces immediately develop curvature blow-up near the corresponding characteristic poles. We point out that \eqref{eq:HMCF-evo-gen} is the general parabolic formulation of the horizontal mean curvature flow~\cite{capognacitti09,Ferrariliumanfredi_2014}, when the initial set is the Kor\'anyi sphere. 

The explicit formula~\eqref{eq:koranyi-evo} reveals another interesting geometric aspect of this blow up. Indeed, setting \(\lambda_t = \sqrt{R^4-12t^2}\),
one obtains
\begin{equation}\label{eq:koranyi-cylas}
    \delta_{1/\lambda_t}(\Sigma_t)
    \longrightarrow
    \mathrm{Cyl}_R
    =
    \left\{
        p\in\R^3:
        r(p)^2=\frac{1}{R^2\sqrt{12}}
    \right\}    \qquad
    \text{as }t\to T^*_R
\end{equation}
locally in Hausdorff distance.
Thus, under this anisotropic rescaling, the asymptotic profile of the Korányi evolution is a vertical cylinder.

The instantaneous curvature blow-up, together with the cylindrical asymptotics of the Korányi evolution and the non-existence of compact right-Lipschitz self-similar solutions established above, leads us to conjecture that  horizontal mean curvature flows starting from a suitable class of compact horizontally mean-convex hypersurfaces and becoming extinct at a single point do not approach a compact self-similar profile. Instead, their asymptotic geometry should be cylindrical. 
In this sense, vertical cylinders, which are shown to be self-similar for the general evolution in~\cite{capognacitti09}, would provide the natural asymptotic models for compact mean-convex horizontal mean curvature flow evolutions in $\bH$.

\subsection{Structure of the paper}
Section~\ref{s:heis} introduces the geometric analysis of the Heisenberg group needed in the paper: the horizontal notation for vectors, the right- and left-invariant sub-Riemannian distances, the horizontal gradient and the Laplacian of a function, and the curvature of hypersurfaces, together with some of their properties. It also collects the definitions of the Riemannian approximants and of viscosity solutions.

Section~\ref{s:axisym} contains the proof of Theorem~\ref{thm:approx}. Its main building block is Proposition~\ref{thm:prescrmcanalit}, which drives the approximation by prescribed mean curvature; this is then combined with a polynomial approximation, carried out in Section~\ref{ss:analyticapprox}.

The proof of Theorem~\ref{thm:existence} is given in Section~\ref{s:existence}. It is based on the Riemannian approximation discussed above, at least in the axisymmetric case (Theorem~\ref{thm:approx}), and relies on two further ingredients: stable gradient estimates for the solutions of the approximating Riemannian problems (Propositions~\ref{stimagradeps} and~\ref{thm:gradest_ednoncar}), and the extraction of a limit viscosity solution (Proposition~\ref{prop:limitsueps}).

Section~\ref{s:blowup} shows that, whenever a horizontal mean curvature flow forms a foliation that is (slightly less than) $C^2$, the horizontal mean curvature of the evolving sets blows up at characteristic points (Theorem~\ref{thm:hmcfoliation}). This result leads to the proof of Theorem~\ref{cor:noselfsim}.

\subsection{Acknowledgments} The authors are members of INdAM--GNAMPA. The first and second authors are supported by the INdAM--GNAMPA projects ``ANGELS'' (CUP code E53C25002010001) and ``GIANTS'' (CUP code E53C23001740001).
The second author is also supported by the STARS project ``DEFORM'' of the University of Padova (CUP code C93C25004860003).
The third author is also affiliated with the Padova Neuroscience Center (PNC), University of Padova, and is supported by the French National Research Agency (ANR) through the project ``SCOPE''.

The authors are grateful to Luca Capogna for his interest in this work and for helpful discussions, to Pierre Pansu for an inspiring discussion about the Koranyi evolution, and to Simone Verzellesi for pointing out an error in an argument in an earlier version of the paper.

\subsection{AI Statement}
The present paper is part of the Ph.D. project of the first author. All arguments presented herein were conceived and developed entirely by the authors. Large language models (ChatGPT and Claude) were used solely to assist with proof verification, the generation of TikZ figures, and linguistic editing.

\section{The Heisenberg group}\label{s:heis}
\subsection{Horizontal distribution}\label{ss:horizdistr}
The \emph{horizontal distribution} $\mathcal{D}$ defined in~\eqref{eq:campi} is bracket-generating of step $2$, i.e., the whole $\R^3$ is generated by commutators of $\mathcal D$ of length one and two, as the commutation relation
$[X,Y] = Z$ holds, where $Z = \partial_z$. Moreover, this is the only non-trivial commutation relation.

\begin{rem}[Properties of the horizontal distribution]\label{rem:propertiesofD}
    Horizontal vector fields are
    \begin{enumerate}[label=\roman*.]
        \item {\em left-translations invariant} with respect to the Lie group operation
        \begin{equation}
        \label{eq:oper}
        (x,y,z)*(x',y',z')=\left(x+x',y+y',z+z'+\frac{1}{2}(xy'-x'y)\right).
        \end{equation}
        That is, $V(f(p*q))=(Vf)(p*q)$ for any $V\in\mathcal D$ and $p,q\in\R^3$.
        We call \emph{vertical translation} any map $q\in\mathbb R^3\mapsto (0,0,z)*q$ for $z\in \R$.
        \item {\em $1$-homogeneous} with respect to the family of anisotropic  dilations defined in formula~\eqref{eq:dilations}
        That is, $V(u\circ\delta_\lambda)=\lambda (Vu)\circ \delta_\lambda$  for any $u\in C^1(\R^3)$.
    \end{enumerate}
    We explicit how horizontal vector fields change under rotations. For $\theta\in\mathbb S^1$ let
        \begin{equation}\label{eq:rotations}
            \mathcal{R}_\theta(x,y,z) = (x\cos\theta - y\sin\theta, x\sin\theta + y\cos\theta, z)
        \end{equation}
        denote the rotation about the $z$-axis of angle $\theta$. Given any $u\in C^1(\R^3)$ and $\theta\in\mathbb S^1$, we have
        \begin{align}
            X(u\circ \mathcal{R}_\theta) = \cos\theta\,(Xu)\circ \mathcal{R}_\theta+\sin\theta\,(Yu)\circ \mathcal{R}_\theta,\\
            Y(u\circ \mathcal{R}_\theta) = -\sin\theta\,(Xu)\circ \mathcal{R}_\theta+\cos\theta\,(Yu)\circ \mathcal{R}_\theta.
        \end{align}
        Moreover, the vertical field $Z=\partial_z$ satisfies $Z(u\circ \mathcal{R}_\theta)=(Zu)\circ \mathcal{R}_\theta$ (i.e., it is \emph{rotation invariant}).
\end{rem}

\subsection{Sub-Riemannian distance} 
A curve $\gamma:[0,1]\to\R^3$ is said to be {\em horizontal} if it is absolutely continuous and $\dot\gamma(t)\in\mathcal D_{\gamma(t)}$ for a.e.\ $t\in [0,1]$. Since being horizontal is preserved under monotone reparametrization, this normalization to $[0,1]$ entails no loss of generality.
The {\em sub-Riemannian distance} from $p\in\R^3$ to $q\in\R^3$ is defined as
\begin{equation}
    \label{eq:CCdist}
    d(p,q)=\inf\left\{\int_0^1 |\dot\gamma(t)|\,dt \;\middle|\; 
    \gamma:[0,1]\to\R^3 \text{ is horizontal, } \gamma(0)=p,\ \gamma(1)=q \right\}.
\end{equation}
Since $\mathcal D$ is bracket generating, the Chow--Rashevskii theorem implies that 
$d:\R^3\times\R^3\to [0,+\infty]$ is finite and induces the Euclidean topology on $\R^3$ 
(see, e.g., \cite[Theorem~3.31]{Agrachev_Barilari_Boscain_2019}). 
By Remark~\ref{rem:propertiesofD}, the distance $d$ is
\begin{enumerate}[label=\roman*.]
    \item left-translations invariant with respect to the group operation~\eqref{eq:oper}, i.e., $d(q*p,q*p')=d(p,p')$ for all $p,p'\in\R^3$; 
    \item $1$-homogeneous with respect to the family of  anisotropic  dilations $\{\delta_\lambda\}_{\lambda>0}$, i.e., $d(\delta_\lambda p, \delta_\lambda p')=\lambda d(p,p')$ for all $\lambda>0$, $p,p'\in\R^3$.
\end{enumerate}

\begin{rem}
The sub-Riemannian distance is not equivalent to the Euclidean one. The following estimates hold: for every compact set $K\subset\R^3$ there exists a constant $C_K>0$ such that
\begin{equation}
    C_K^{-1}|p-q|_{\R^3}\le d(p,q)\le C_K |p-q|_{\R^3}^{\frac{1}{2}},\quad \forall p,q\in K,
\end{equation}
see, e.g., \cite[Proposition~5.15.1]{BLU07}.
\end{rem}

\subsection{Horizontal gradient, Horizontal Hessian and Heisenberg sub-Laplacian}\label{ss:Hoperators} Let $u\in C^1(\R^3)$. We call {\em horizontal gradient} of $u$ the vector field 
\begin{equation}
\label{eq:grad}
    \grad u=(Xu) X+(Yu) Y\in \mathcal D.    
\end{equation}
If $u\in C^2$, the {\em horizontal Hessian of $u$} is the quadratic form on $\mathcal D\times\mathcal D$ represented, with respect to the basis $\{X,Y\}$ by the matrix
\begin{equation}
    \label{eq:hessian}
    \hess u=\begin{pmatrix}
        XXu & XYu\\ YXu &YYu
    \end{pmatrix}.
\end{equation}
The {\em Heisenberg sub-Laplacian} of $u$ is defined as the trace of the horizontal Hessian 
\begin{equation}
    \Delta u=\operatorname{tr}(\hess u)=X^2u+Y^2u,
\end{equation}
and it is related to the horizontal gradient via the identity $\Delta u=\operatorname{div}(\grad u)$, where the divergence is computed with respect to the Lebesgue measure.
\begin{rem}\label{rem:invariance}
By Remark~\ref{rem:propertiesofD}, the horizontal gradient, the horizontal Hessian, and the Heisenberg sub-Laplacian are left-invariant with respect to the group law~\eqref{eq:oper} and also invariant under rotations about the $z$-axis. 
Moreover, the vertical vector field $Z=\partial_z$ is also left-invariant and invariant under rotations about the $z$-axis.
\end{rem}

\subsection{Right-invariant structure}\label{ss:rightinv}
In this paper, {\em right-invariant structure} will also come into play. The Lie algebra of right-invariant vector fields with respect to the group law~\eqref{eq:oper} is generated by $\tilde X$ and $\tilde Y$ defined in~\eqref{eq:campi_right} and by $Z=\partial_z$, with the only nontrivial commutation relation given by $[\tilde X,\tilde Y]=-Z$. Observe that left-invariant and right-invariant vector fields commute. In particular,
\begin{equation}\label{eq:vfcomm}
    [X,\tilde{X}] = [X,\tilde{Y}] = [Y,\tilde{X}] = [Y,\tilde{Y}] = 0.
\end{equation}

We define the right-invariant distance $\tilde d$, by substituting horizontal curves in~\eqref{eq:CCdist} with curves that are a.e.\ tangent to $\tilde{\mathcal D} $. {We similarly put a $\ \tilde{}\ $ on any other quantity associated to the right invariant distribution, in full analogy with the previous subsections.} 

For a regular function $u$, we introduce the notation
\begin{equation}
    \tilde \grad u=\tilde X u\, X+\tilde Yu\, Y.
\end{equation}

\subsection{Hypersurfaces in the Heisenberg group and mean curvature} \label{sec:hypersurfaces}
The {\em set of characteristic points} of a $C^1$-regular surface $\mathcal S$ is 
$\mathcal C(\mathcal S)=\{p\in\mathcal S : \mathcal D_p\subset T_p\mathcal S\}$.
If $\mathcal S$ is given as the zero level set of $u \in C^k(\mathbb{R}^3)$, $k\ge 1$, i.e., $\mathcal S=\{p\in\R^3 : u(p)=0\}$ with $\grad_{E} u\neq 0$, (here the gradient is computed with respect to the Euclidean structure), then we have
\begin{equation}
\label{eq:char_u}
\mathcal{C}(\mathcal S) = \{p \in \mathcal S \mid X u(p) = Y u(p) = 0\}.
\end{equation}

\begin{rem}
\label{rem:frobeniuschar}
Since the horizontal distribution $\mathcal D$ is bracket generating, the characteristic locus $\mathcal C(\Sigma)$ of any $C^2$-hypersurface $\Sigma$ has empty interior in $\Sigma$, so that non-characteristic points are dense in $\Sigma$; see~\cite[p.\,65]{Balogh2003}. The same holds for the right-invariant distribution $\tilde{\mathcal D}$, which is also bracket generating.
\end{rem}

Let $\Omega=\{p\in\R^3 \mid u(p)>0\}$ with $u\in C^k(\R^3)$, $k\ge 1$ and $\grad_{E} u(p)\neq 0$ for any $p \in \partial \Omega$. In this case we say that $\Omega$ is given as the (regular) zero super-level set of $u$ and we define the {\em horizontal outer unit normal $\nu$ to $\Omega$} as
\begin{equation}
\label{eq:nu_u}
\nu = -\frac{\grad u}{\lvert \grad u \rvert}
\quad \text{on } \partial\Omega \setminus \mathcal{C}(\partial\Omega).
\end{equation}
 One can check that this definition does not depend on the choice of $u$, as one can infer  e.g. from~\cite[Theorem~1.1]{ArcozziFerrari07}).

If $\Omega$ is the zero super-level set of $u\in C^2(\R^3)$, we define the {\em horizontal mean curvature of $\partial\Omega$} at noncharacteristic points $p \in \partial\Omega \setminus \mathcal{C}(\partial \Omega)$ as
\begin{equation}
\label{eq:H}H=\operatorname{div}(\nu)
    =-\frac{1}{|\grad u|}\left(\Delta u
    -\frac{\hess u\left(\grad u,\grad u\right)}{|\grad u|^2}\right)
    \quad \text{at } p \in \partial\Omega \setminus \mathcal{C}(\partial\Omega).
\end{equation}
\begin{defn}[Horizontal mean convexity and continuity of $H$]
\label{def:hormeanconve}
Let $\Omega$ be a bounded open set with $C^2$ boundary, and let $H$ be defined as in \eqref{eq:H} at noncharacteristic point.
We say that $\partial \Omega$ has \emph{strictly positive horizontal mean curvature} if $H > c >0$ at any $p \in \partial\Omega \setminus \mathcal{C}(\partial \Omega)$. In this case we say that $\partial \Omega$ is \emph{horizontally mean convex}.

Moreover, we say that $H \in C^0(\partial \Omega)$ if $H$ extends to a continuous function on the whole $\partial \Omega$.
If $\partial \Omega$ happens to be noncharacteristic, then  $H\in C^0{(\partial \Omega)}$.
\end{defn}


\subsection{Riemannian approximation} \label{ss:Riemapprox}

For $\varepsilon>0$, consider the Riemannian manifold $\bH_\varepsilon=(\R^3, g_\eps)$, where $g_\eps$ is the unique Riemannian metric that makes the vector fields~\eqref{eq:campi_eps} orthonormal.
{We denote by $d_\varepsilon$ distance induced by $g_\eps$. We record the following well-known observation. 


\begin{prop}\label{prop:deps-d}
    Let $\varepsilon>0$, and $p,q\in\R^3$. Then $d_\varepsilon(p,q)\le d(p,q)$. Similarly, $\tilde{d}_\varepsilon(p,q)\le \tilde{d}(p,q)$. 
\end{prop}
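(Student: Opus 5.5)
The plan is to compare admissible curves: every horizontal curve is admissible for $d_\varepsilon$, and its $g_\varepsilon$-length coincides with its sub-Riemannian length. Taking the infimum over a larger class of curves then gives a smaller (or equal) value.

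First I would recall that $d_\varepsilon(p,q)$ is the infimum of $\int_0^1 |\dot\gamma(t)|_{g_\varepsilon}\,dt$ over all absolutely continuous curves $\gamma:[0,1]\to\R^3$ with $\gamma(0)=p$ and $\gamma(1)=q$. Next, let $\gamma$ be any horizontal curve from $p$ to $q$. For a.e.\ $t$ we can write $\dot\gamma(t)=a(t)X+b(t)Y$, with no $Z$-component in the frame $\{X,Y,Z\}$. Since $\{X_\varepsilon,Y_\varepsilon,Z_\varepsilon\}=\{X,Y,\varepsilon\partial_z\}$ is $g_\varepsilon$-orthonormal, we get $|\dot\gamma(t)|_{g_\varepsilon}=\sqrt{a^2+b^2}=|\dot\gamma(t)|$, which is exactly the sub-Riemannian norm on $\mathcal D$. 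Hence the two lengths of $\gamma$ agree. Taking the infimum over horizontal curves and using that they form a subclass of the curves admissible for $d_\varepsilon$ yields $d_\varepsilon(p,q)\le d(p,q)$.

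For the right-invariant statement, I read $\tilde d_\varepsilon$ as the distance of the Riemannian metric $\tilde g_\varepsilon$ making $\tilde X,\tilde Y,\varepsilon\partial_z$ orthonormal, following the paper's convention of putting a tilde on the right-invariant analogues. The argument is identical with $\tilde X,\tilde Y$ in place of $X,Y$: curves a.e.\ tangent to $\tilde{\mathcal D}$ have $\tilde g_\varepsilon$-length equal to their $\tilde d$-length. The only point that needs any care is that finiteness of $d$ (by Chow--Rashevskii) is not even needed, since the inequality is trivial if the right-hand side were infinite. I therefore expect no real obstacle here.
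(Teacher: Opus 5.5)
Your proposal is correct and is essentially the paper's argument: horizontal curves are admissible for $d_\varepsilon$, and because $X,Y$ are $g_\varepsilon$-orthonormal their $g_\varepsilon$-length equals their sub-Riemannian length, so the infimum over the larger class is no bigger. The paper writes the same length identity in coordinates via the horizontality constraint on $\dot\gamma_z$, and the right-invariant case goes through verbatim with $\tilde X,\tilde Y,\tilde g_\varepsilon$, as you say.
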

\begin{proof}
Recall that an absolutely continuous curve $\gamma=(\gamma_x,\gamma_y,\gamma_z)$ on $[0,1]$ is horizontal if and only if $\dot \gamma_z=\frac12(\gamma_x\dot \gamma_y-\gamma_y\dot \gamma_x)$ a.e.\ on $[0,1]$. If $\gamma$ is horizontal, then
\begin{equation}
    |\dot\gamma|_\varepsilon
    =\left|\left(\dot\gamma_x,\dot\gamma_y,0\right)\right|_{\R^3}
    =|(\dot\gamma_x,\dot\gamma_y)|_{\R^2}=|\dot\gamma|,
\end{equation}
and the conclusion follows from the definition of the sub-Riemannian distance~\eqref{eq:CCdist}.
\end{proof}}

We use the subscript $\varepsilon$ to denote the Riemannian quantities norms and tensors induced by $g_\eps$. In particular, $\gradeps$ denotes the gradient, $\hesseps$ the Hessian and $\Delta_\eps$ the Laplace-Beltrami operator. When considering the norm of a $g_\eps$ gradient vector field, we agree to denote $|\nabla_\eps f|^2 = g_\eps(\nabla_\eps f, \nabla_\eps f)$, that is explicitly given by $(Xf)^2 + (Yf)^2 + \eps^2(Zf)^2$.
Accordingly, the $g_\eps$-mean curvature of $\partial \Omega$, with $\Omega$ given as the zero super-level set of $u\in C^2$ with $|\nabla_\eps u| \neq 0$ on $\partial \Omega$ is given by 
\begin{equation}
    \label{eq:Heps}
    H_\varepsilon
=-\operatorname{div}\left(\frac{\gradeps u}{|\gradeps u |_\eps}\right)
    =-\frac{1}{|\gradeps u|_\eps}\left(\Delta_\varepsilon u-\frac{\hesseps u\left(\gradeps u,\gradeps u\right)}{|\gradeps u|_\eps^2}\right) \quad \text{at} \,\, p  \in \partial \Omega.
\end{equation}
One can finally check that, if $f_\eps \to f$ in $C^2$, then $\nabla_\eps f_\eps \to \nabla f$ and $\nabla_\eps\nabla_\eps f_\eps(\nabla_\eps f_\eps, \nabla_\eps f_\eps) \to \nabla\nabla f(\nabla f, \nabla f)$ uniformly. In particular, one deduces that 
\[
\operatorname{div}\left(\frac{\gradeps f_\eps}{|\gradeps f_\eps |}\right) \to \operatorname{div}\left(\frac{\nabla f}{|\nabla f|}\right)
\]
locally uniformly in $\{|\nabla f| \neq 0\}$. As a consequence, given a bounded domain $\Omega$ with $C^2$ boundary, we have that the mean curvatures satisfy 
\begin{equation}\label{eq:HepstoH}
    H_\eps \to H \text{ as } \eps \to 0 \text{ locally uniformly in } \partial \Omega \setminus \mathcal{C}(\partial \Omega).
\end{equation} 
This last observation is also recorded in~\cite[Corollary~4.21]{capdanieltys2007}. We are repeatedly using these simple observations without explicitly mentioning it.
In full analogy one can consider Riemannian approximations for the sub-Riemannian structure given by the right-invariant distribution described above. Namely, one can define $\tilde{g}_\eps$ as the unique Riemannian metric on $\R^3$ such that the vector fields $\tilde X, \tilde Y$ and $\eps Z$ are orthonormal. We define accordingly $\tilde\nabla_\eps f$ the $\tilde g_\eps$-gradient of a function $f$ and agree to denote $|\tilde\nabla_\eps f|^2 = \tilde g_\eps(\tilde\nabla_\eps f, \tilde\nabla_\eps f)$.
 The following  basic  observation will crucially allow us to handle axisymmetric sets in the a priori estimates below, thereby allowing us to establish Theorem \ref{thm:existence} under these assumptions. 

 \begin{rem}\label{rem:rotinvariancegrad}
Let $u\in C^1(\R^3)$ be invariant under rotations, i.e., $u\circ R_\theta=u$ for all $\theta\in\mathbb S^1$, where $R_\theta$ is defined in~\eqref{eq:rotations}. Then, we have
\begin{equation}\label{eq:rotinvar}
     |\nabla u|=|\tilde\nabla u|,    
\end{equation}
and consequently, one also gets
\begin{equation}\label{rem:rotinvariancegradeps}
    |\gradeps u|=|\tilde\nabla_\eps u|.
\end{equation}
\end{rem}
\subsection{Viscosity solutions to~\eqref{eq:HMCFeq} and \eqref{HMCFprobl}}

The formal derivation described in the introduction leads us to the level-set formulation of the horizontal mean-convex mean curvature flow equation~\eqref{eq:HMCFeq}.
To include characteristic initial data in the study of horizontal mean curvature flow evolutions we will study viscosity solutions to \eqref{HMCFprobl}.
\begin{defn}\label{dfn:viscsol}
    A function $u\in C^0(\Omega)$ is a \textit{viscosity subsolution} of \eqref{eq:HMCFeq} in $\Omega\subset\R^3$ if, for any $p\in\Omega$ and any $\Phi\in C^2(\Omega)$ such that $u-\Phi$ attains a local maximum at $p$, then
    \begin{equation}\label{eq:subsol}
    \begin{cases}
        1\ge-\Delta \Phi+\frac{\hess \Phi(\grad \Phi, \grad \Phi)}{|\grad \Phi|^2}\ \ \ \ \mbox{at } p\\
        \mbox{if } \ \grad \Phi(p)\not =0
    \end{cases}
    \end{equation}
    and 
    \begin{equation}\label{eq:subsol_eta}
    \begin{cases}
        1\ge-\Delta \Phi+\hess \Phi(\eta, \eta)\ \ \ \ \mbox{at } p\\
        \mbox{for some } \eta\in\mathcal D\mbox{ with } |\eta|\le1, \mbox{ if } \grad \Phi(p) =0.
    \end{cases}
    \end{equation}
    A function $u\in C^0(\Omega)$ is a \textit{viscosity supersolution} of \eqref{eq:HMCFeq} in $\Omega\subset\R^3$ if, for any $p_0\in\Omega$ and any function $\Phi\in C^2(\Omega)$ such that $u-\Phi$ attains a local minimum at $p$ then
    \begin{equation}\label{eq:supersol}
    \begin{cases}
        1\le-\Delta \Phi+\frac{\hess \Phi(\grad \Phi, \grad \Phi)}{|\grad \Phi|^2}\ \ \ \ \mbox{at } p\\
        \mbox{if } \ \grad \Phi(p)\not =0
    \end{cases}
    \end{equation}
    and 
    \begin{equation}\label{eq:supersol_eta}
    \begin{cases}
        1\le-\Delta \Phi+\hess \Phi(\eta, \eta)\ \ \ \ \mbox{at } p\\
        \mbox{for some } \eta\in\mathcal D \mbox{ with } |\eta|\le1, \mbox{ if } \grad \Phi(p) =0.
    \end{cases}
    \end{equation}
    A function $u \in C^0({\Omega})$ is a \textit{viscosity solution} to equation~\eqref{eq:HMCFeq} if it is both a viscosity supersolution and a viscosity subsolution. A function $u \in C^0(\bar{\Omega})$ is a {\em viscosity solution} to \eqref{HMCFprobl} if it is a viscosity solution of \eqref{eq:HMCFeq} in $\Omega$, and it satisfies the boundary condition continuously:
    \begin{equation}
        \lim_{x \to p} u(x) = 0 \quad \forall p \in \partial\Omega.
    \end{equation}
\end{defn}

\subsubsection{Approximating solutions}\label{ss:approxsol} Our strategy to prove the existence of solutions to~\eqref{HMCFprobl} (see Theorem~\ref{thm:existence}) is based on an approximation procedure. 
We first apply Theorem~\ref{thm:approx} to approximate $\Omega$ by a family of sets $\{\Omega_\varepsilon\}_{\varepsilon>0}$, each of which has strictly mean-convex boundary in $\bH_\varepsilon$. 
The sets $\Omega_\varepsilon$ are then used as initial domains for a family of Riemannian mean curvature flows with mean-convex boundary, which approximate~\eqref{HMCFprobl}. More precisely, the prescribed mean curvature equation~\eqref{eq:HMCFeq} is approximated by the corresponding equation in $\bH_\varepsilon$:
\begin{equation}
    \label{RepsMCFeq}
       -\Delta_\eps u_\eps + \frac{\grad_\eps \grad_\eps u_\eps(\grad_\eps u_\eps, \grad_\eps u_\eps)}{|\grad_\eps u_\eps|^2} = 1.
\end{equation}
The resulting approximating problems correspond to the level set formulation for the mean curvature flow on $\Omega_\varepsilon$ whose solutions in a viscosity sense are denoted by $u_\varepsilon\in C^0(\overline \Omega_\varepsilon)$:
\begin{equation}\label{RepsMCFprobl}\tag{MCF$_\eps$}
\begin{bvprob}
   -\Delta_\eps u_\eps + \frac{\grad_\eps \grad_\eps u_\eps(\grad_\eps u_\eps, \grad_\eps u_\eps)}{|\grad_\eps u_\eps|^2} = 1 \cond{in }{\Omega_\varepsilon}, \\
   u_\eps = 0 \cond{on}{\partial \Omega_\varepsilon.}
   \end{bvprob}
\end{equation}
 Viscosity solutions to~\eqref{RepsMCFprobl} are obtained as in \cite{Schulze2008} as the limit as $\delta\to 0^+$ of the unique solution $u_\varepsilon^\delta$ of the following elliptic regularized problem:
\begin{equation}\label{RepsMCFproblreg}\tag{MCF$_\eps^\delta$}
    \begin{bvprob}
  -\Delta_\eps u_\eps^\delta + \frac{\grad_\eps \grad_\eps u_\eps^\delta(\grad_\eps u_\eps^\delta, \grad_\eps u_\eps^\delta)}{|\grad_\eps u_\eps^\delta|^2+\delta^2} = 1 \cond{in}{\Omega_\varepsilon}, \\
   u_\eps^\delta = 0 \cond{on}{\partial \Omega_\varepsilon}.
\end{bvprob}
\end{equation}

To construct a solution of~\eqref{HMCFprobl}, we first obtain a family
of solutions \(\{u_\varepsilon^\delta\}_{\varepsilon,\delta>0}\) to
the elliptic problems~\eqref{RepsMCFproblreg}, and then pass to the
limits as \(\delta\to0\) and \(\varepsilon\to0\). To this end, we
establish suitable gradient bounds for \(u_\varepsilon^\delta\), which
ensure the uniform boundedness and equicontinuity of the family in
\(C^0(\bar{\Omega})\).

This is the main technical step in which our assumptions are used. 
First, a Bochner-type identity based on \cite[Proposition~3.4]{Garofalo23} allows to obtain \emph{right-invariant gradient estimates} for \(u_\varepsilon^\delta\). Such identity is based on the commutation relations between the left-invariant distribution \(\mathcal D\) and the right-invariant
distribution \(\tilde{\mathcal D}\): a similar behavior is not expected to hold for the left-invariant gradient $\nabla$ and explains the role of $\tilde d$. 

Axisymmetry in the characteristic case is crucial for converting the right-invariant gradient estimate into a left-invariant one by Remark~\ref{rem:rotinvariancegrad}, allowing to exploit the boundary information $H_\eps |\nabla_\eps u_\eps| = 1$ on $\partial \Omega_\varepsilon$ coupled with the $g_\eps$-mean convexity of $\partial \Omega_\varepsilon$ to get a gradient bound on $\overline{\Omega_\varepsilon}$ that is uniform in $\eps$.

In the non-characteristic case  we obtain an upper bound on the norm of right-invariant gradient, arguing similarly to the Euclidean case treated in \cite{evansspruck91}. Indeed, comparing $u_\varepsilon^\delta$ with an explicit barrier built from the Euclidean distance $d_E$ to $\partial\Omega_\varepsilon$, via the comparison principle for the regularized operator acting on it from~\cite[Proposition~4.3]{FernandezRealRosOton22}, one obtains a bound on $\|\tilde\nabla_\varepsilon u_\varepsilon^\delta\|_{L^\infty(\partial\Omega_\varepsilon)}$ uniform in $\varepsilon$ and $\delta$. Here the left-invariant conversion step is unnecessary: it is the right-invariant distance $\tilde d$, rather than $d$, that governs the compactness argument in this case.

\begin{rem}
By Proposition~\ref{lem:topologiaaxsymm}, regular axisymmetric sets are topologically either spheres or tori. Theorem~\ref{thm:approx} deals only with the spherical case, since axisymmetric tori are non-characteristic by Corollary~\ref{cor:axisimcarpoint}. 

Consequently, by \eqref{eq:HepstoH}, $\partial\Omega$ is $g_\eps$-strictly mean convex for $\varepsilon>0$ sufficiently small, and thus serves as admissible initial data for the mean-convex mean curvature flow~\eqref{RepsMCFprobl} (cf.~\cite{Schulze2008}).
\end{rem}

\section{Approximation by prescribed mean curvature and proof of theorem \ref{thm:approx}}\label{s:axisym}
In this section we collect some facts on axisymmetric sets and prove Theorem~\ref{thm:approx}.

\subsection{$C^2$-regular connected axisymmetric sets} 
Axisymmetric connected sets are those generated by rotation of a $C^k$-curve around the $z$-axis.

\begin{defn}\label{defn:gencurve}
Let $\gamma=(\gamma_1,\gamma_2)\in C^k([0,1],\R^2_+)$, $k\in\mathbb N$ be a regular, simple curve.
Recalling the rotation $R_\theta$ defined in~\eqref{eq:rotations} for $\theta\in\mathbb S^1$, we define the rotation map $\Phi_\gamma:[0,1]\times\mathbb S^1\to\R^3$ as 
\begin{equation}\label{eq:mappasupedirot}
    \Phi_\gamma(\tau,\theta) = R_\theta(\gamma_1(\tau),0,\gamma_2(\tau)),
\end{equation}

We say that $\gamma$ is a {\em generating profile} for a bounded, open, connected set  $\Omega \subset \R^3$ with $C^k$-regular boundary, $k\ge 1$ if  $\partial \Omega=\Phi_\gamma([0,1]\times\mathbb S^1)$.
In this case, $\Omega$ is called \emph{$C^k$-regular axisymmetric}.
\end{defn}

\begin{rem}
    Every $C^k$-regular axisymmetric set admits a $C^k$ generating profile, obtained by intersecting its boundary with the half-plane $\{y=0, \ x\ge 0\}$. While the profile of the curve can be re-parametrized, its regularity class is an {intrinsic property}.
\end{rem}

In the following proposition we discuss the topology of $C^k$-regular axisymmetric sets.
\begin{prop}\label{lem:topologiaaxsymm}
Let $\Omega\subset\R^3$ be a $C^1$-regular open connected and bounded set which is axisymmetric.
Then, if $\partial\Omega$ intersects the $z$-axis, it is homeomorphic to a sphere, otherwise it is homeomorphic to a torus.
\end{prop}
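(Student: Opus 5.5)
Work in the half-plane $\Pi=\{y=0,\ x\ge 0\}$, which meets every rotation orbit exactly once. Put $\sigma=\partial\Omega\cap\Pi$. Axisymmetry gives $\partial\Omega=\{R_\theta(q): q\in\sigma,\ \theta\in\mathbb S^1\}$. The plan is to show that $\sigma$ is either a simple arc with both endpoints on the $z$-axis or a simple closed curve disjoint from the axis, and then to read off the topology.

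First I use that $\partial\Omega$ is a compact connected $C^1$ surface. Connectedness holds because $\Omega$ is connected and bounded and its boundary is a regular hypersurface; if this needs justification, I would work with the boundary component relevant to the setting of Definition~\ref{defn:gencurve}. Away from the axis, the rotation action is free and transversal to $\Pi$. Hence, near a point $q\in\sigma$ with $x>0$, the set $\sigma$ is a $C^1$ embedded curve: the tangent plane of $\partial\Omega$ contains the rotation direction $\partial_\theta$, which is transversal to $\Pi$, so $\partial\Omega$ meets $\Pi$ transversally there. At a point $q=(0,0,z_0)\in\partial\Omega$ on the axis, rotation invariance forces $T_q\partial\Omega$ to be rotation invariant, so $T_q\partial\Omega=\{z=0\}$. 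Near $q$ the surface is therefore a graph $z=h(x,y)$ with $h$ radial, and $\sigma$ near $q$ is the half-curve $\{(x,0,h(x,0)):x\ge0\}$, a $C^1$ arc ending transversally on the axis. Consequently $\sigma$ is a compact $1$-manifold with boundary, with boundary points exactly $\sigma\cap\{x=0\}$. Since $\partial\Omega$ is connected, so is $\sigma$, because $\partial\Omega$ is the continuous image of $\sigma\times\mathbb S^1$ and conversely $\sigma$ is the quotient of $\partial\Omega$ by the rotations. By the classification of compact connected $1$-manifolds, $\sigma$ is either a closed arc with endpoints on the axis and interior in $\{x>0\}$, or a circle in $\{x>0\}$.

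The axis-free case should give a torus. If $\partial\Omega$ does not meet the axis, then $\sigma$ is a circle, and the map $\Phi:\sigma\times\mathbb S^1\to\partial\Omega$, $(q,\theta)\mapsto R_\theta q$, is a continuous bijection, injective since $x>0$ on $\sigma$. Because $\sigma\times\mathbb S^1$ is compact, $\Phi$ is a homeomorphism onto a torus.

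The axis case should give a sphere. If $\partial\Omega$ meets the axis, then $\sigma$ cannot be a circle, since a circle would contain no boundary points of the $1$-manifold $\sigma$ on the axis. So $\sigma$ is an arc $\gamma:[0,1]\to\Pi$ with $\gamma(0)$ and $\gamma(1)$ on the axis and $\gamma(\tau)$ off the axis for $0<\tau<1$. The map $\Phi_\gamma$ of \eqref{eq:mappasupedirot} is then a continuous surjection from $[0,1]\times\mathbb S^1$ that collapses exactly $\{0\}\times\mathbb S^1$ and $\{1\}\times\mathbb S^1$ to points. It therefore descends to a continuous bijection from the quotient, which is $S^2$ (the suspension of $\mathbb S^1$), and compactness again makes it a homeomorphism.

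I expect the main obstacle to be the local analysis at axis points, namely showing that $\sigma$ reaches the axis as a single arc and not tangentially or with several branches. The step that $T_q\partial\Omega$ must be horizontal uses only rotation invariance of a $2$-plane that contains the $z$-direction or not: a rotation-invariant plane through $q$ is either horizontal or would have to contain all horizontal directions, and the latter is impossible for a $2$-plane. This plane is therefore the key point, and it yields the graph description and the endpoint structure.
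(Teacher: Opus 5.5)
Your argument is correct in substance and follows the paper's route. You take the profile in the half-plane $\{y=0,\ x\ge 0\}$, show that it is either a circle off the axis or an arc meeting the axis exactly at its endpoints, and identify $\partial\Omega$ with $\mathbb S^1\times\mathbb S^1$ or with $[0,1]\times\mathbb S^1$ with both boundary circles collapsed. The paper reduces the key step to one sentence (``regularity of $\partial\Omega$ implies that the profile meets the axis only at its two endpoints''). Your local analysis is what justifies that sentence: transversality of $\partial\Omega$ to $\Pi$ off the axis because $\partial_\theta\in T\partial\Omega$, horizontality of $T_q\partial\Omega$ and the radial graph at axis points, the classification of compact $1$-manifolds, and the compact-to-Hausdorff bijection.

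One step is wrong as written. Connectedness of $\partial\Omega$ does \emph{not} follow from $\Omega$ being connected, bounded and with regular boundary. The spherical shell $\{1<|p|<2\}$ is axisymmetric, connected, bounded and smooth, and its boundary meets the $z$-axis, yet that boundary is two disjoint spheres. ``Working with one boundary component'' would also not prove a statement about all of $\partial\Omega$. Connectedness has to come from Definition~\ref{defn:gencurve}, as the paper implicitly uses. A $C^1$-regular axisymmetric set satisfies $\partial\Omega=\Phi_\gamma([0,1]\times\mathbb S^1)$ for a single simple curve $\gamma$, so both $\partial\Omega$ and $\sigma=\gamma([0,1])$ are connected. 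With this fix the proof is complete.

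A minor point: in your last paragraph, the claim that containing all horizontal directions is ``impossible for a $2$-plane'' is false, since the horizontal plane does. The clean reason is the one you use in your second paragraph: the normal line of a rotation-invariant plane is itself rotation invariant, hence vertical.
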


\begin{rem} If $\partial\Omega$ is  not of class $C^1$, different topologies may appear. This is the case of a \textit{pinched torus}, which is given by the rotation around the $z$-axis of a circle touching the origin. This is the boundary of a compact axisymmetric set, but it has a singularity on the rotation axis and fails to be homeomorphic to a sphere or to a torus. 
\end{rem}
\begin{proof}
Let $\gamma\in C^1([0,1])$ be a generating profile for $\Omega$. 
If $\gamma$ does not meet the $z$-axis, then $\gamma$ is a closed simple curve. Hence it is homeomorphic to $\mathbb S^1$ so that $\partial \Omega$ is homeomorphic to $\mathbb S^1\times\mathbb S^1$.

If $\gamma$ meets the $z$-axis, regularity of $\partial\Omega$ implies that the profile meets the axis only at its two endpoints. Therefore $\partial\Omega$ is obtained from $[0,1]\times\mathbb S^1$ by collapsing each boundary circle $\{0\}\times\mathbb S^1$ and $\{1\}\times\mathbb S^1$ to a point. This quotient is therefore homeomorphic to $\mathbb S^2$. 
\end{proof}

\begin{cor}\label{cor:axisimcarpoint}
    Let $\Omega \subset \R^3$ be an open, $C^1$-regular, axisymmetric set. Then
    \begin{enumerate}[label=\roman*.]
        \item if $\partial\Omega$ is homeomorphic to a sphere, then $\mathcal C(\partial\Omega)$ consists of exactly two points where the surface meets the $z$-axis;
        \item if $\partial\Omega$ is homeomorphic to a torus, then $\mathcal C(\partial\Omega)=\emptyset$.
    \end{enumerate}
\end{cor}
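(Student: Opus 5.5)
The plan is to reduce the characteristic condition to a pointwise computation on the rotation map $\Phi_\gamma$ and to use the rotational symmetry of both $\partial\Omega$ and $\mathcal D$. As a preliminary observation, the distribution at a point $p=(x,y,z)$ is $\mathcal D_p=\ker\theta_p$, where $\theta = dz - \tfrac12(x\,dy - y\,dx)$. In cylindrical coordinates $(r,\varphi,z)$ this form reads $\theta = dz - \tfrac{r^2}{2}\,d\varphi$. It follows that $\mathcal D_p$ always contains the radial vector $\partial_r$, and that $\mathcal D_p$ is the horizontal plane $\{dz=0\}$ exactly when $r=0$, i.e.\ on the $z$-axis.

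First I treat points off the axis. Let $p=\Phi_\gamma(\tau,\theta)$ with $\gamma_1(\tau)>0$. The tangent plane $T_p\partial\Omega$ is spanned by the rotational field $\partial_\varphi = r\,\partial_{\mathrm{ang}}$ and by the rotated profile velocity $\dot\gamma_1\partial_r+\dot\gamma_2\partial_z$. Evaluating $\theta$ on these vectors gives $\theta(\partial_\varphi) = -r^2/2 \neq 0$, so $\partial_\varphi\notin\mathcal D_p$. Hence $T_p\partial\Omega\neq\mathcal D_p$, and $p$ is non-characteristic. Two distinct planes cannot satisfy $\mathcal D_p\subset T_p\partial\Omega$, so every characteristic point must lie on the $z$-axis.

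Next I treat points on the axis. In the torus case, Proposition~\ref{lem:topologiaaxsymm} (more precisely, its proof) shows that $\partial\Omega$ does not meet the $z$-axis, so $\mathcal C(\partial\Omega)=\emptyset$. In the sphere case, the proof of Proposition~\ref{lem:topologiaaxsymm} shows that $\partial\Omega$ meets the axis exactly at the two endpoints $\Phi_\gamma(0,\cdot)$ and $\Phi_\gamma(1,\cdot)$, and these are two distinct points because $\gamma$ is simple. It remains to show that both are characteristic. At such a point $p$, the surface is $C^1$ and invariant under all rotations $R_\theta$ about the $z$-axis, which fix $p$. Therefore $T_p\partial\Omega$ is a plane through the origin (in vector form) invariant under every rotation about the vertical axis. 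The only such planes are the horizontal plane $\{dz=0\}$ and vertical planes containing $\partial_z$; a vertical plane is not rotation invariant. So $T_p\partial\Omega=\{dz=0\}=\mathcal D_p$, and $p$ is characteristic.

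I expect the main care to be needed in the sphere case, specifically in justifying that the profile meets the axis only at its endpoints. That is already asserted in the proof of Proposition~\ref{lem:topologiaaxsymm}, so I will quote it. The invariance argument above then handles the tangent planes without any further differentiability assumption on $\gamma$ at the endpoints.
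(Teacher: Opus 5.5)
Your proof is correct, and it takes a somewhat different route from the paper.

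\textbf{The paper's route.} At a characteristic point, the paper writes $\partial\Omega$ locally as a radial graph $z=f(r)$ and computes
\[
|\nabla(z-f(r))|^2=f'(r)^2+\tfrac{r^2}{4},
\]
which vanishes if and only if $r=0$ and $f'(0)=0$. The converse inclusion, that the axis points really are characteristic, is only implicit there. It amounts to knowing that near an axis point the surface is a $C^1$ radial graph with $f'(0)=0$.

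\textbf{Your route.} You work with the contact form $\theta=dz-\frac{r^2}{2}\,d\varphi$. Off the axis, the rotation field $\partial_\varphi$ is tangent to $\partial\Omega$ but $\theta(\partial_\varphi)=-\frac{r^2}{2}\neq0$. On the axis, the tangent plane is invariant under all rotations about the $z$-axis, hence equals $\{dz=0\}=\mathcal D_p$.

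\textbf{Comparison.} The underlying fact is the same: the term $\frac{r^2}{4}$ in the paper's formula is exactly $(\theta(\partial_\varphi)/r)^2$, the contribution of the unit angular direction. Your version needs no graph representation and makes both inclusions explicit. In the off-axis step you only need $\partial_\varphi\in T_p\partial\Omega$, not the full spanning claim. In exchange, the paper's computation yields an explicit formula for the horizontal gradient of axisymmetric $z$-graphs, which is reused in the curvature formulas later on.

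\textbf{One sentence to fix.} As written, ``the only such planes are the horizontal plane and vertical planes containing $\partial_z$; a vertical plane is not rotation invariant'' contradicts itself. Argue instead as follows. If a plane is invariant under every rotation about the $z$-axis, then so is its orthogonal line, since rotations are orthogonal. The only line mapped to itself by a rotation by an angle $\alpha\notin\{0,\pi\}$ about the $z$-axis is the $z$-axis itself. Hence the plane is horizontal.
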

\begin{proof}
By Proposition~\ref{lem:topologiaaxsymm}, the statement follows once we prove that 
    \begin{equation}
        \mathcal C(\partial \Omega) = \partial \Omega \cap \{(x,y,z) \in \mathbb{R}^3 \mid x = 0,\, y = 0 \}.
    \end{equation}
In fact, if $p=(x,y,z)\in\partial\Omega$ satisfies $\mathcal D\subset T_p\partial\Omega$, locally around $p$, the boundary $\partial \Omega$  can be represented as the graph of a function $z=f(x,y)$. Moreover, being $\Omega$ axisymmetric, $f$ only depends on $r=r(x,y)$. 
Characteristic points are therefore those at which $|\grad (z-f(r(x,y)))|^2=f'(r)^2 + \frac{r^2}{4}=0$. This occurs if and only if $r=0$ and $f'(0)=0$.
\end{proof} 


\begin{rem}[Curvatures of axisymmetric $z$-graphs]
    Let \(f\in C^2(I,\R)\), with \(I\subset [0,\infty)\) an open interval. 
If \(\partial\Omega\) is the zero level-set of the function 
\[
u=z-f\circ r \in C^2(\mathbb R^3),
\]
then a direct computation shows that the horizontal mean curvature and the $g_\varepsilon$-mean curvature of \(\partial\Omega\) are respectively given by \(H=\varphi\circ r\), and \(H_\varepsilon=\varphi_\varepsilon\circ r\), where
\begin{align}
    \label{eq:MCHeis}
    \varphi(r)&=-\frac{\sigma}{r}\,\partial_r\left(\frac{r f'(r)}{\sqrt{f'(r)^2+\frac{r^2}{4}}}\right),\\
    \label{eq:MCHeis-eps}
    \varphi_\varepsilon(r)&=-
    \frac{\sigma}{r} \, \partial_r\left( \frac{r f'(r)}{\sqrt{f'(r)^2 + \frac{r^2}{4} + \varepsilon^2}} \right).
\end{align}
Here $\sigma=1$ if $\Omega=\{u<0\}$ and $\sigma=-1$ if $\Omega=\{u>0\}$.
While $\varphi$ is not defined at $r=0$ by Corollary~\ref{cor:axisimcarpoint}, in the definition of $\varphi_\varepsilon$ the value at $r=0$ is intended in the limit as $r\to 0^+$.
\end{rem}


\subsection{Approximation by prescribed mean curvature.}
Throughout this section, let $\Omega\subset\mathbb R^3$ be a $C^2$-axisymmetric set, homeomorphic to a sphere, such that $H(\partial\Omega)$ extends continuously to $\mathcal C(\partial\Omega)$. By Proposition~\ref{lem:topologiaaxsymm} and Corollary~\ref{cor:axisimcarpoint}, there exists a continuous function $\varphi$ on $(0,R]$ such that $H=\varphi\circ r$ on $\partial\Omega\setminus\mathcal C(\partial\Omega)$. The continuity of $H$ across $\mathcal C(\partial\Omega)$ allows us to extend $\varphi$ continuously to $[0,R]$, with the same notation. Henceforth, we write 
\begin{equation}
\label{eq:notationphi}
    H=\varphi\circ r\text{ on }\partial\Omega.
\end{equation}
The arguments of this section do not require any mean convexity assumption of $\Omega$.

\subsubsection{The case of a finite vertical set of the generating profile.} 
We first carry out the construction under the assumption that the
generating profile has \emph{finitely many vertical points}.
\begin{defn}
    Given a $C^1$ curve $\gamma=(\gamma_1,\gamma_2):[0,1]\to \mathbb{R}^2_+$, we define the \emph{$\mathcal{V}$-set} as
    \[
    \mathcal{V}(\gamma)= \left\{ \tau\in [0,1] \mid \gamma_1'(\tau)=0 \right\}.
    \]
    Accordingly, the \emph{vertical part} of the boundary of the $C^1$-regular axisymmetric set $\Omega$ generated by $\gamma$ is defined as 
    \[
      \mathcal{V}(\partial\Omega) = \left\{ \Phi_\gamma(\tau,\theta) \mid \tau\in \mathcal{V}(\gamma),\ \theta\in\mathbb S^1 \right\},
    \]
    where $\Phi_\gamma$ is the parametrization given in \eqref{eq:mappasupedirot}.
\end{defn}

\begin{rem}
    The vertical part $\mathcal{V}(\partial\Omega)$ is independent of the parametrization of the generating profile, since regular reparametrizations multiply $\gamma_1'$ by a non-zero factor. Geometrically, $\mathcal{V}(\partial\Omega)$ is the set of points where the tangent plane to $\partial \Omega$ is parallel to the vertical axis.
\end{rem}

\begin{figure}
    \centering
    \begin{tikzpicture}[scale=.9]

\draw[->] (-0.5,0) -- (6,0);
\draw[->] (0,-4) -- (0,5);

\coordinate (ga0) at (0,4);
\coordinate (ga1) at (3.15,2.75);
\coordinate (ga2) at (2.1,1.9);
\coordinate (R)   at (5.5,0);
\coordinate (c41) at (4.15,-1.25);
\coordinate (c42) at (2.85,-0.85);
\coordinate (ga4) at (1.5,-1.5);
\coordinate (ga5) at (3.5,-2.7);
\coordinate (ga6) at (0,-3.5);

\draw[thick]
(ga0)
.. controls (1.2,4) and (3.15,3.3) .. (ga1)

.. controls (3.15,2.2) and (2.1,2.45) .. (ga2)

.. controls (2.1,1.35) and (5.5,0.7) .. (R)

.. controls (5.5,-0.7) and (4.7,-1.25) .. (c41)
.. controls (3.6,-1.25) and (3.35,-0.85) .. (c42)
.. controls (2.35,-0.85) and (1.5,-1.0) .. (ga4)

.. controls (1.5,-2.0) and (3.5,-2.2) .. (ga5)

.. controls (3.5,-3.2) and (1.2,-3.5) .. (ga6);

\fill (ga0) circle (2pt);
\fill (ga1) circle (2pt);
\fill (ga2) circle (2pt);
\fill (R)   circle (2pt);
\fill (ga4) circle (2pt);
\fill (ga5) circle (2pt);
\fill (ga6) circle (2pt);

\fill (c41) circle (1.6pt);
\fill (c42) circle (1.6pt);

\draw[dashed] (3.15,0) -- (ga1);
\draw[dashed] (2.1,0)  -- (ga2);
\draw[dashed] (1.5,0)  -- (ga4);
\draw[dashed] (3.5,0)  -- (ga5);

\node[below right] at (c41) {$\gamma_2'=0$};
\node[above left]  at (c42) {$\gamma_1'=0$};

\node[left]  at (ga0) {$\gamma(\alpha_0)$};
\node[right] at (ga1) {$\gamma(\alpha_1)$};
\node[left]  at (ga2) {$\gamma(\alpha_2)$};
\node[below] at (R)   {$a_3=a_{j_R}=R$};
\node[left]  at (ga4) {$\gamma(\alpha_4)$};
\node[right] at (ga5) {$\gamma(\alpha_5)$};
\node[left]  at (ga6) {$\gamma(\alpha_6)$};

\node[below] at (0,0)    {$a_0=0$};
\node[below] at (3.15,0) {$a_1$};
\node[below] at (2.1,0)  {$a_2$};
\node[below] at (1.5,0)  {$a_4$};
\node[below] at (3.5,0)  {$a_5$};

\node[above] at (2.3,3.6)    {$\Sigma_1$};
\node[above] at (2.9,1.7)    {$\Sigma_2$};
\node[above] at (4,1)        {$\Sigma_3=\Sigma_{j_R}$};
\node[below] at (3.8,-1.35)  {$\Sigma_4$};
\node[above] at (2.9,-1.9)   {$\Sigma_5$};
\node[above] at (1.5,-4)     {$\Sigma_6$};

    \end{tikzpicture}
    \caption{Decomposition of the generating profile into the graphical
    pieces \(\Sigma_j\).}
    \label{fig:decomposition}
\end{figure}

The construction of \(\{\Omega_\varepsilon\}_{\varepsilon>0}\) begins
by decomposing \(\partial\Omega\) at the vertical points of its generating profile, as illustrated
in Figure~\ref{fig:decomposition}. Each resulting portion is a radial
graph, whose horizontal mean curvature is given by~\eqref{eq:MCHeis},
with prescribed curvature \(\varphi\) as in~\eqref{eq:notationphi} and
a sign determined by the local graphical parametrization. These pieces
will serve as local building blocks for the approximating boundaries,
obtained by solving the corresponding prescribed \(g_\varepsilon\)-mean
curvature equation~\eqref{eq:MCHeis-eps} with
\(\varphi_\varepsilon=\varphi\). The following lemma makes this
decomposition precise.

\begin{lem}[Radial decomposition of the boundary]
\label{lem:decomposition}
Let \(\Omega\) be a bounded, open, \(C^2\), axisymmetric set,
homeomorphic to a sphere, such that \(H(\partial\Omega)\) extends
continuously to \(\mathcal C(\partial\Omega)\). Let
\(\gamma\in C^2([0,1],\R^2_+)\) be a generating profile for
\(\partial\Omega\), with \(\#\mathcal V(\gamma)<\infty\), and assume that
\begin{equation}
\label{eq:ipgamma}
    \gamma_1(0)=\gamma_1(1)=0,
    \qquad
    \gamma_1(\tau)>0
    \quad\text{for every }\tau\in(0,1),
    \qquad
    \gamma_2(0)>\gamma_2(1).
\end{equation}
Let $0=\alpha_0<\alpha_1<\cdots<\alpha_N=1$ be the elements of \(\mathcal V(\gamma)\cup\{0,1\}\), and set
\[
    \tilde I_j=(\alpha_{j-1},\alpha_j),
    \qquad
    a_j=\gamma_1(\alpha_j),
    \qquad
    j=0,\ldots,N.
\]
Then the following properties hold.

\begin{enumerate}
\item[\emph{(i)}] \emph{Height functions.}
For every \(j=1,\ldots,N\), the restriction
\[
    \gamma_1|_{\tilde I_j}\colon \tilde I_j\longrightarrow I_j
\]
is a \(C^2\)-diffeomorphism onto
\begin{equation}
\label{eq:Ij}
    I_j
    =
    \gamma_1(\tilde I_j)
    =
    \bigl(
        \min\{a_{j-1},a_j\},
        \max\{a_{j-1},a_j\}
    \bigr).
\end{equation}
Letting 
\(\tau_j=(\gamma_1|_{\tilde I_j})^{-1}\),
the functions 
\begin{equation}
\label{eq:height}
    f_j(r)=\gamma_2(\tau_j(r)),
    \qquad r\in I_j
\end{equation}
are \(C^2(I_j)\), and the sets
\begin{equation}
\label{eq:graphical-pieces}
    \Sigma_j
    =
    \Phi_\gamma(\tilde I_j\times\mathbb S^1)
    =
    \bigl\{
        (x,y,z)\in\R^3:
        r(x,y)\in I_j,\ 
        z=f_j(r(x,y))
    \bigr\}
\end{equation}
are pairwise disjoint radial graphs satisfying
\begin{equation}
\label{eq:boundary-decomposition}
    \partial\Omega
    =
    \bigcup_{j=1}^N\overline{\Sigma_j}.
\end{equation}
Moreover, consecutive pieces meet along the corresponding vertical
circle:
\begin{equation}
\label{eq:adjacent-piece-intersection}
    \overline{\Sigma_j}\cap\overline{\Sigma_{j+1}}
    =
    \Phi_\gamma(\{\alpha_j\}\times\mathbb S^1),
    \qquad
    j=1,\ldots,N-1.
\end{equation}

\item[\emph{(ii)}] \emph{Poles and junctions.}
At the poles,
\[
    f_1\in C^2([0,a_1)),
    \qquad
    f_N\in C^2([0,a_{N-1})),
\]
and
\begin{equation}
\label{eq:bc_0}
    \lim_{r\to0^+}f_1'(r)
    =
    \lim_{r\to0^+}f_N'(r)
    =
    0.
\end{equation}
For \(j=1,\ldots,N\), set
\begin{equation}
\label{eq:sigmaj}
    \sigma_j
    =
    \operatorname{sgn}
    \bigl(\gamma_1'|_{\tilde I_j}\bigr)
    \in\{-1,1\}.
\end{equation}
At every junction point \(a_j\), \(j=1,\ldots,N-1\), we have
\begin{equation}
\label{eq:bc_int}
    \lim_{r\to a_j^{-\sigma_j}}|f_j'(r)|
    =
    \lim_{r\to a_j^{\sigma_{j+1}}}|f_{j+1}'(r)|
    =
    +\infty,
\end{equation}
where \(r\to a^\sigma\) denotes the one-sided limit at \(a\) from the
side specified by \(\sigma\).

For every nonzero endpoint \(a\in\partial I_j\), there exist
\(\rho>0\) and \(\delta_j(a)\in\{-1,1\}\) such that
\begin{equation}
\label{eq:monotonaj}
    \delta_j(a)f_j'(r)>0
    \qquad
    \text{for every }
    r\in I_j\cap(a-\rho,a+\rho).
\end{equation}
At a common endpoint \(a_j\) of two consecutive pieces, these signs
satisfy
\begin{equation}
\label{eq:junc_deltasigma}
    \sigma_j\delta_j(a_j)
    =
    \sigma_{j+1}\delta_{j+1}(a_j),
    \qquad
    j=1,\ldots,N-1.
\end{equation}

\item[\emph{(iii)}] \emph{Equators.}
Let
\[
    R=\max_{\tau\in[0,1]}\gamma_1(\tau)
\]
and choose \(\tau_R\in(0,1)\) such that \(\gamma_1(\tau_R)=R\).
Then \(\tau_R\in\mathcal V(\gamma)\). Denote by
\(j_R\in\{1,\ldots,N-1\}\) the index such that
\(\tau_R=\alpha_{j_R}\). Then
\begin{equation}
\label{eq:atjR}
    a_{j_R}=R,
    \qquad
    \sigma_{j_R}=1,
    \qquad
    \sigma_{j_R+1}=-1,
\end{equation}
and
\begin{equation}
\label{eq:bcbR}
    \lim_{r\to R^-}f_{j_R}'(r)=-\lim_{r\to R^-}f_{j_R+1}'(r)=-\infty.
\end{equation}

\item[\emph{(iv)}] \emph{Curvatures.}
For \(j=1,\ldots,N\), define
\begin{equation}
\label{eq:varphij}
    \varphi_j(r)
    =
    H\bigl(r,0,f_j(r)\bigr),
    \qquad r\in I_j.
\end{equation}
Then \(\varphi_j\) extends continuously to \(\overline I_j\), and
\begin{equation}
\label{eq:Hvarphij}
    H(p)=\varphi_j(r(p)),
    \qquad p\in\Sigma_j.
\end{equation}
On \(I_j\), the function \(\varphi_j\) satisfies equation~\eqref{eq:MCHeis}
with \(\sigma=\sigma_j\). Moreover, the continuity of \(H\) at each
junction gives
\begin{equation}
\label{eq:curvature-junction-compatibility}
    \lim_{r\to a_j^{-\sigma_j}}\varphi_j(r)
    =
    \lim_{r\to a_j^{\sigma_{j+1}}}\varphi_{j+1}(r),
    \qquad
    j=1,\ldots,N-1.
\end{equation}
\end{enumerate}
\end{lem}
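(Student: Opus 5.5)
The proof is essentially one-dimensional bookkeeping on the generating profile, carried out item by item in the order of the statement.

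\emph{Item (i).} On each open interval \(\tilde I_j\) there are no points of \(\mathcal V(\gamma)\), so \(\gamma_1'\) does not vanish there. By continuity of \(\gamma_1'\) it has a constant sign \(\sigma_j\). Hence \(\gamma_1|_{\tilde I_j}\) is strictly monotone and \(C^2\) with nonvanishing derivative, and the inverse function theorem makes it a \(C^2\)-diffeomorphism onto the interval with endpoints \(a_{j-1},a_j\); this is \eqref{eq:Ij}. Then \(f_j=\gamma_2\circ\tau_j\) is \(C^2\), and \eqref{eq:graphical-pieces} follows by rotating via \(\Phi_\gamma\). Disjointness of the \(\Sigma_j\) and \eqref{eq:adjacent-piece-intersection} come from \(\gamma\) being simple and \(\Phi_\gamma\) being injective on \((0,1)\times\mathbb S^1\), since \(\gamma_1>0\) there. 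Finally, \eqref{eq:boundary-decomposition} holds because the closures of the \(\tilde I_j\) cover \([0,1]\).

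\emph{Item (ii).} At the poles I use Corollary~\ref{cor:axisimcarpoint}: the points \(\gamma(0),\gamma(1)\) are characteristic, so the tangent plane there is horizontal. Hence near each pole \(\partial\Omega\) is a \(C^2\) graph \(z=F(x,y)\), rotation invariant, with \(\nabla_E F=0\) at the pole. Writing \(F=f_1\circ r\) shows \(f_1\in C^2([0,a_1))\) with \(f_1'(0)=0\), since \(f_1(r)=F(r,0)\) and \(F\) is \(C^2\); the same argument applies to \(f_N\).

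At a junction \(a_j\), \(j\le N-1\), we have \(\gamma_1'(\alpha_j)=0\) while \(\gamma'\neq0\), so \(\gamma_2'(\alpha_j)\neq0\). Therefore \(f_j'=\gamma_2'/\gamma_1'\circ\tau_j\) blows up in absolute value as \(r\) approaches \(a_j\) from inside \(I_j\). The inside of \(I_j\) is the side \(-\sigma_j\), and the inside of \(I_{j+1}\) is the side \(\sigma_{j+1}\); this gives \eqref{eq:bc_int}. The sign statement \eqref{eq:monotonaj} holds because near a nonzero endpoint \(\gamma_2'\) has a fixed sign (nonzero at \(\alpha_j\)) and so does \(\gamma_1'\); thus \(\delta_j(a)=\operatorname{sgn}(\gamma_2'(\alpha))\,\sigma_j\). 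Since \(\gamma_2'(\alpha_j)\) is shared by both pieces, \(\sigma_j\delta_j(a_j)=\operatorname{sgn}\gamma_2'(\alpha_j)=\sigma_{j+1}\delta_{j+1}(a_j)\), using \(\sigma^2=1\). This is \eqref{eq:junc_deltasigma}.

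\emph{Item (iii).} An interior maximum of \(\gamma_1\) forces \(\gamma_1'(\tau_R)=0\), so \(\tau_R\in\mathcal V(\gamma)\). Since \(\gamma_1\) increases up to its maximum and decreases after it, \(\sigma_{j_R}=1\) and \(\sigma_{j_R+1}=-1\). The sign of the limits in \eqref{eq:bcbR} follows from \(\gamma_2'(\tau_R)\).

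This sign determination is the step I expect to be the main obstacle. I would argue that \(\gamma_2'(\tau_R)<0\) using orientation. The profile runs from the top pole to the bottom pole, since \(\gamma_2(0)>\gamma_2(1)\), and it is simple. So the region enclosed by \(\gamma\) and the axis segment lies to one side of the curve, and at the outermost point the curve must be traversed downward. More precisely, I would use a Jordan curve argument: the point \(\gamma(\tau_R)\) lies on the boundary of the planar bounded region, and that region is contained in \(\{r\le R\}\). Then \(f_{j_R}'=\gamma_2'/\gamma_1'\to-\infty\) because \(\gamma_1'\to0^+\), while \(f_{j_R+1}'\to+\infty\) because \(\gamma_1'\to0^-\).

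\emph{Item (iv).} The extension \(\varphi_j\) is the restriction of the continuous extension of \(H\) to \(\partial\Omega\), so it extends continuously to \(\overline I_j\). Identity \eqref{eq:Hvarphij} follows from rotation invariance of \(H\) (Remark~\ref{rem:invariance}). The equation \eqref{eq:MCHeis} with \(\sigma=\sigma_j\) is the preceding curvature remark applied to \(u=z-f_j\circ r\). Here one checks that \(\Omega\) lies locally on the side \(\{u<0\}\) or \(\{u>0\}\) according to \(\sigma_j\), again from the orientation fixed in item (iii) and propagated across junctions by \eqref{eq:junc_deltasigma}. Finally, \eqref{eq:curvature-junction-compatibility} holds because both one-sided limits equal the value of the continuous \(H\) on the circle \(\Phi_\gamma(\{\alpha_j\}\times\mathbb S^1)\).
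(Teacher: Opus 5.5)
Your proposal is correct and follows the paper's proof essentially step by step. It uses the same quotient $f_j'=(\gamma_2'/\gamma_1')\circ\tau_j$ for the blow-up at junctions, the same identity $\sigma_j\delta_j(a_j)=\operatorname{sgn}\gamma_2'(\alpha_j)$, and the same orientation argument at the equator. Your Jordan-curve justification of $\gamma_2'(\tau_R)<0$, and your check that the side of $\Omega$ relative to each graph matches $\sigma_j$ in (iv), spell out what the paper leaves to ``the chosen orientation.'' One small correction: $\sigma_{j_R}=1$ and $\sigma_{j_R+1}=-1$ follow from maximality together with strict monotonicity of $\gamma_1$ on the two adjacent intervals $\tilde I_{j_R},\tilde I_{j_R+1}$, not from $\gamma_1$ being globally increasing before $\tau_R$ and decreasing after it, which is false in general.
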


\begin{proof}
Most of the assertions follow directly from the construction. We
provide details only where needed. Up to reparametrization,
condition~\eqref{eq:ipgamma} follows from the fact that
\(\partial\Omega\) is homeomorphic to a sphere.

\begin{enumerate}
\item[\emph{(i)}]
For every \(j=1,\ldots,N\), the derivative \(\gamma_1'\) has constant
sign on \(\tilde I_j\). Hence
\(\gamma_1|_{\tilde I_j}\) is a \(C^2\)-diffeomorphism onto \(I_j\).
The remaining assertions follow from the definitions and the regularity
of \(\partial\Omega\).

\item[\emph{(ii)}]
The assertions at the poles follow from the \(C^2\)-regularity and
axisymmetry of \(\partial\Omega\). Notice that \(a_0=a_N=0\).

By~\eqref{eq:height}, for every \(j=1,\ldots,N\),
\begin{equation}
\label{eq:fprimegamma}
    f_j'(r)
    =
    \frac{\gamma_2'(\tau_j(r))}
         {\gamma_1'(\tau_j(r))},
    \qquad r\in I_j.
\end{equation}
Since \(\gamma_1'=0\) on \(\mathcal V(\gamma)\), the regularity of
\(\gamma\) implies that \(\gamma_2'\neq0\) there.
Formula~\eqref{eq:bc_int} follows.

In particular, \(f_j'\) does not vanish in a sufficiently small
one-sided neighborhood of each nonzero endpoint of \(I_j\). Therefore, \(f_j'\) has constant sign in
such a neighborhood, proving~\eqref{eq:monotonaj}. Finally,
at the junction point corresponding to \(\alpha_j\), we observe that 
\begin{equation}
    \sigma_j\delta_j(a_j)
    =
    \operatorname{sgn}\bigl(\gamma_2'(\alpha_j)\bigr)
    =
    \sigma_{j+1}\delta_{j+1}(a_j),
\end{equation}
which proves~\eqref{eq:junc_deltasigma}.

\item[\emph{(iii)}]
Since \(\gamma_1(0)=\gamma_1(1)=0<R\), we have
\(\tau_R\in(0,1)\). As \(\tau_R\) is a maximum point of \(\gamma_1\),
it belongs to \(\mathcal V(\gamma)\). By maximality and the chosen
orientation,
\[
    \sigma_{j_R}=1,
    \qquad
    \sigma_{j_R+1}=-1.
\]
Moreover, regularity gives
\(\gamma_2'(\alpha_{j_R})\neq0\), while the chosen orientation gives
\(\gamma_2'(\alpha_{j_R})<0\). Since
\(\tau_{j_R}(r)\to\alpha_{j_R}^-\) as \(r\to R^-\),
formula~\eqref{eq:fprimegamma} yields
\[
    \lim_{r\to R^-}f_{j_R}'(r)
    =
    \lim_{r\to R^-}
    \frac{\gamma_2'(\tau_{j_R}(r))}
         {\gamma_1'(\tau_{j_R}(r))}
    =
    -\infty.
\]
Arguing in the same way for \(f_{j_R+1}\), we obtain the second limit in~\eqref{eq:bcbR}, and the conclusion follows.

\item[\emph{(iv)}]
The representation of \(H\) on each \(\Sigma_j\) and the formula for
\(\varphi_j\) follow from~\eqref{eq:MCHeis}. Its continuous extension
to \(\overline I_j\), as well as
\eqref{eq:curvature-junction-compatibility}, follows from the assumed
continuous extension of \(H\) to \(\mathcal C(\partial\Omega)\).
\end{enumerate}
\end{proof}

The next proposition contains the solution to the \emph{prescribed $g_\varepsilon$-mean curvature equation} that is needed to build the sets $\Omega_\varepsilon$.

\begin{prop}[Prescribing $g_\varepsilon$-mean curvature]
\label{thm:prescrmcanalit}
Let $\Omega$ be a bounded open $C^2$-axisymmetric set, homeomorphic to a sphere, such that $H(\partial\Omega)$ extends continuously to $\mathcal C(\partial\Omega)$.
    Let $\gamma\in C^2([0,1],\R^2_+)$ be a generating profile for $\partial\Omega$ with $\#\mathcal V(\gamma)<\infty$, and let $\Sigma_1,\dots,\Sigma_N$ be the surfaces defined in Lemma~\ref{lem:decomposition}.
    Then there exists \(\varepsilon_0>0\) such that, for every \(\varepsilon\in(0,\varepsilon_0)\), there exists a $C^2$-regular axisymmetric set $\Omega_\varepsilon\subset\R^3$, whose boundary decomposes correspondingly into pieces $\Sigma_1^\varepsilon,\dots,\Sigma_N^\varepsilon$, such that its $g_\varepsilon$-mean curvature satisfies for each $j=1,\dots,N$
    \begin{equation}\label{eq:prescrHeps}
        H_\eps (p) = H(p') \quad\text{for}\quad p \in \Sigma_j^\varepsilon \setminus{\{r=0\}},\quad p'\in\Sigma_j \setminus{\{r=0\}}\quad \text{such that}\quad r (p) = r (p').
    \end{equation} 
    In particular, $H_\eps\in C^0(\partial \Omega_\varepsilon)$.
    
    Moreover, as $\varepsilon\to 0$, the sets $\partial\Omega_\varepsilon$ converge to $\partial\Omega$ with respect to the Hausdorff distance $d_{\mathcal H}$.
\end{prop}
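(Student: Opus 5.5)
The plan is to integrate the prescribed-curvature ODE \eqref{eq:MCHeis-eps} explicitly on each graphical piece, using the same data \(\varphi_j\) as the original surface. Then we glue the pieces at the vertical circles and pass to the limit \(\varepsilon\to0\).

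\textbf{The first integral.} Write \(\psi_j(r)=r f_j'(r)/\sqrt{f_j'(r)^2+r^2/4}\). Lemma~\ref{lem:decomposition}(iv) gives \(\partial_r\psi_j=-\sigma_j r\varphi_j\) on \(I_j\). Since \(\varphi_j\) is continuous on \(\overline I_j\), \(\psi_j\) is \(C^1\) up to the endpoints. Its boundary values are known:
\begin{itemize}
\item \(\psi_j=0\) at the poles, by \eqref{eq:bc_0};
\item \(\psi_j(a_j)=\delta_j(a_j)\,2a_j\) at a nonzero endpoint \(a_j\), since \(|f_j'|\to\infty\) there by \eqref{eq:bc_int}.
\end{itemize}
For the approximating surface \(\Sigma^\varepsilon_j=\{z=f_j^\varepsilon(r)\}\) I prescribe \(H_\varepsilon=\varphi_j\) with the same sign \(\sigma_j\). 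The first integral is then
\[
\psi^\varepsilon_j(r):=\frac{r (f_j^\varepsilon)'(r)}{\sqrt{(f_j^\varepsilon)'(r)^2+r^2/4+\varepsilon^2}}=\psi_j(r)+c_j^\varepsilon .
\]
The constant \(c_j^\varepsilon\) is chosen to enforce the analogous endpoint conditions:
\begin{itemize}
\item \(\psi_j^\varepsilon\to0\) at a pole, which forces \(c^\varepsilon=0\) on \(\Sigma_1,\Sigma_N\);
\item \(\psi^\varepsilon_j(b)=\pm b\sqrt{\cdots}\)-type vertical condition \(|\psi^\varepsilon_j(b)|=b\) at a vertical junction.
\end{itemize}

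Solving gives
\[
(f_j^\varepsilon)'=\frac{\psi^\varepsilon_j\sqrt{r^2/4+\varepsilon^2}}{\sqrt{r^2-(\psi^\varepsilon_j)^2}},
\]
which requires \(|\psi^\varepsilon_j|<r\). For \(\varepsilon=0\) this is exactly the identity \(|\psi_j|<r\) coming from \(\psi_j=rf'/\sqrt{f'^2+r^2/4}\). Because the normalization differs (\(r\) versus \(2r\), from the factor \(\sqrt{r^2/4+\varepsilon^2}\)), I would rather rescale and use the exact Heisenberg quantity \(\psi_j/(r)\in(-2,2)\) as the model. The key point is that the \(\varepsilon\)-version reaches the vertical value at a radius \(a_j^\varepsilon\) slightly different from \(a_j\).

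\textbf{Gluing at the junctions (main obstacle).} The junction radii \(a_j^\varepsilon\) must be solved for, since with the data \(\varphi_j\) fixed the endpoint \(a_j\) cannot in general be kept. I would therefore proceed in three steps.
\begin{enumerate}
\item Extend \(\varphi_j\) continuously a little beyond \(\overline I_j\).
\item Define \(a_j^\varepsilon\) as the first radius where \(|\psi_j^\varepsilon|\) hits the vertical threshold. This is an implicit-function or intermediate-value argument, using that the threshold is crossed transversally. Transversality holds because \(\partial_r(\text{threshold}-|\psi_j|)\) at \(a_j\) equals \(2\pm\sigma_j\delta_j a_j\varphi_j(a_j)\), and this is nonzero precisely when the curve is not too curved. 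If degenerate, I would instead perturb the constant \(c^\varepsilon_j\).
\item Propagate from the top pole downward, piece by piece. At each junction choose \(c_{j+1}^\varepsilon\) so that \(\Sigma_{j+1}^\varepsilon\) becomes vertical at the same radius \(a_j^\varepsilon\) with matching orientation \eqref{eq:junc_deltasigma}. Choose the vertical integration constant of \(f^\varepsilon_{j+1}\) so that heights match.
\end{enumerate}

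The propagation ends at the bottom pole. There the integrated \(\psi^\varepsilon_N\) must vanish, while we are also imposing it from the pole side. I would resolve this overdetermination by a one-parameter shooting in the \emph{total height}: integrate \(\Sigma_N^\varepsilon\) from the bottom pole too, and match both sides at the equator \(R^\varepsilon\) via \eqref{eq:bcbR}, where both pieces are vertical with opposite orientations. This matching is automatic once the radii coincide, so the remaining constraint is only that the two equatorial radii agree. That is a single scalar equation, which I would attempt to solve via continuity in \(\varepsilon\) from the \(\varepsilon=0\) solution \(R\). This step is where the restriction \(\varepsilon<\varepsilon_0\) enters.

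\textbf{Regularity and convergence.} With the first derivatives glued as vertical tangents, the profile is \(C^1\). It is \(C^2\) because the curvature of the profile is expressed through \(\varphi_j\) and \(\psi^\varepsilon_j\), both continuous across junctions by \eqref{eq:curvature-junction-compatibility}. At the poles, \(\psi^\varepsilon=O(r^2)\) gives \(f'=O(r)\), so the surface is \(C^2\) there. Property \eqref{eq:prescrHeps} then holds by construction. Finally, \(c_j^\varepsilon\to0\) and \(a_j^\varepsilon\to a_j\), and \(f_j^\varepsilon\to f_j\) uniformly, since the integrands converge in \(L^1\): the singularity \((r-a)^{-1/2}\) is uniformly integrable. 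This yields convergence of \(\partial\Omega_\varepsilon\to\partial\Omega\) in Hausdorff distance.
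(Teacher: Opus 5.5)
There is a genuine gap. Your framework (a first integral whose integration constant is fixed by endpoint conditions) is the right one, but the ``main obstacle'' you then try to overcome comes from an arithmetic slip, and the machinery you build around it does not amount to a proof.

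\textbf{The junction radii do not move.} For every $\varepsilon\ge 0$ one has $rp/\sqrt{p^2+r^2/4+\varepsilon^2}\to r\operatorname{sgn}(p)$ as $|p|\to\infty$. So $\psi_j(a_j)=\delta_j(a_j)\,a_j$, not $2a_j$. Your own formula $(f_j^\varepsilon)'=\psi_j^\varepsilon\sqrt{r^2/4+\varepsilon^2}/\sqrt{r^2-(\psi_j^\varepsilon)^2}$ shows that the vertical threshold $|\psi_j^\varepsilon|=r$ is the same for every $\varepsilon\ge0$, including $\varepsilon=0$. Hence $c_j^\varepsilon=0$ satisfies the pole condition and both junction conditions at once, on the original interval $I_j$. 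That is, $\psi_j^\varepsilon=\psi_j$, or equivalently $(f_j^\varepsilon)'=f_j'\sqrt{1+4\varepsilon^2/r^2}$, which is exactly the paper's explicit solution. There is then no overdetermination at the bottom pole, and no extension of $\varphi_j$, transversality argument or shooting is needed.

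As written, your route also leaves its decisive steps open. The equatorial matching equation is only ``attempted''. The degenerate transversality case is deferred to an unspecified perturbation of $c_j^\varepsilon$. At a shifted junction $a_j^\varepsilon\neq a_j$, the extended data no longer satisfy \eqref{eq:curvature-junction-compatibility}, so your $C^2$ argument there breaks down. Finally, \eqref{eq:prescrHeps} refers to the original radial ranges.

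\textbf{Missing ingredients even with the explicit formula.} The first is embeddedness. You never show that the glued pieces meet only along the junction circles, and this, not a shooting argument, is where $\varepsilon_0$ enters. The paper obtains it in three steps:
\begin{enumerate}
\item It proves uniform convergence $f_j^\varepsilon\to f_j$ on $\overline I_j$ by dominated convergence applied to $|f_j'|\bigl(\sqrt{1+4\varepsilon^2/r^2}-1\bigr)$.
\begin{itemize}
\item Near a pole the integrand is dominated by $2|f_j'|/r$, which stays bounded because $f_j'=O(r^2)$.
\item Near a junction it is dominated by a multiple of $|f_j'|$, which is integrable because $f_j$ is bounded and monotone there. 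Your $(r-a)^{-1/2}$ rate tacitly assumes $\gamma_1''(\alpha_j)\neq0$.
\end{itemize}
\item It uses a positive separation of the limit graphs on compact sets away from the junctions.
\item It uses the opposite monotonicity of consecutive profiles near $a_j$, given by \eqref{eq:monotonaj} and \eqref{eq:junc_deltasigma}.
\end{enumerate}

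The second is the regularity checks, which need more care than you give them.
\begin{itemize}
\item At a junction $f'$ blows up, so you must pass to the parametrization $r=h_j^\varepsilon(z)$. There you verify that the one-sided limits of $(h_j^\varepsilon)''$, namely $(a_j^2/4+\varepsilon^2)^{-1}\bigl(\sigma_j\delta_j(a_j)\varphi_j(a_j)+1/a_j\bigr)$ and its analogue with $j+1$, coincide by \eqref{eq:junc_deltasigma} and \eqref{eq:curvature-junction-compatibility}.
\item At a pole, $f'=O(r)$ does not give a $C^2$ surface of revolution. You need $(f_j^\varepsilon)'(r)=-\sigma_j\varphi(0)\varepsilon r/2+o(r)$ together with $(f_j^\varepsilon)''(r)\to-\sigma_j\varphi(0)\varepsilon/2$, both of which follow from the ODE.
\end{itemize}
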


We first prove a few auxiliary lemmas containing information on $f$ satisfying~\eqref{eq:MCHeis} for a continuous function $\varphi$, that will be used to integrate the prescribed mean curvature equation~\eqref{eq:prescrHeps}.

\begin{lem}\label{prop:proprietaperodeversgenerale}
Let $\sigma\in\{-1,1\}$. For $0\le a<b$, let $\varphi\in C([a,b])$ and $f\in C([a,b])$ be such that
    \begin{enumerate}[label=\alph*)]
    \item $f\in C^2([0,b))$ if $a=0$, and $f\in C^2((a,b))$ if $a>0$;
    \item for $r\in (a,b)$ we have
     \begin{equation}\label{eq:ODEgeneralprop}
        -\frac{\sigma}{r} \frac{\mathrm{d}}{\mathrm{d}r}\left(
        \frac{r f'(r)}{\sqrt{f'(r)^2+\frac{r^2}{4}}}
        \right) = \varphi(r);
    \end{equation}
    \item there exist $\delta_a,\delta_b\in\{-1,1\}$ such that 
    \begin{equation}
    \begin{cases}
        \displaystyle\lim_{r\to0^+}f'(r)=0,
            &\text{if }a=0,\\[1ex]
        \displaystyle\lim_{r\to a^+}\delta_a f'(r)=+\infty,
            &\text{if }a>0,
    \end{cases}
    \qquad
    \lim_{r\to b^-}\delta_b f'(r)=+\infty.
    \end{equation}
\end{enumerate}
Then the function $\psi:[a,b]\to \R$ defined by
\begin{equation}\label{eq:psi}
    \psi(r)=\int_a^r t\,\varphi(t)\,dt,
    \qquad a\le r\le b,
\end{equation}
satisfies the following:
    \begin{enumerate}[label=\roman*.]
        \item $\psi\in C^1((a,b)) \cap C^0([a,b])$;
        
        \item $r^2-(\psi(r) - a\delta_a\sigma)^2\ge 0$ in $[a,b]$ and 
        \begin{equation}\label{eq:existence_condition}
        r^2-(\psi(r) - a\delta_a\sigma)^2 = 0 \iff r\in\{a,b\};
        \end{equation}
        
        \item Let $\Psi:(a,b)\to \R$ be the function defined by 
        \begin{equation}\label{eq:Psi}
            \Psi(r)=\frac{|\psi(r)-a\delta_a\sigma|\,\frac{r}{2}}{\sqrt{r^2-(\psi(r)-a\delta_a\sigma)^2}}.
        \end{equation} 
        Then $\Psi\in L^1([a,b])$ and for $r\in (a,b)$ it satisfies
        \begin{equation}\label{eq:tesiiii}
            f'(r)= \delta(r)\Psi(r).
        \end{equation}        
        where $\delta:[a,b]\to \R$ is the function defined by
        \begin{equation}\label{eq:delta}
            \delta(r)=
            \begin{cases}
                \operatorname{sgn}(f'(r)),&\quad \text{if }f'(r)\neq 0;\\
                0,&\quad \text{if }f'(r)= 0.\\
            \end{cases} 
        \end{equation}
   \end{enumerate}
\end{lem}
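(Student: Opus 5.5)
The plan is to integrate \eqref{eq:ODEgeneralprop} once and then solve algebraically for $f'$. Set
\[
W(r)=\frac{r f'(r)}{\sqrt{f'(r)^2+\frac{r^2}{4}}},\qquad r\in(a,b).
\]
By a), $W\in C^1((a,b))$, and \eqref{eq:ODEgeneralprop} reads $W'(r)=-\sigma r\varphi(r)$. Since $\varphi\in C([a,b])$, the function $\psi$ in \eqref{eq:psi} lies in $C^1((a,b))\cap C^0([a,b])$ with $\psi'=r\varphi$; this is item i. Integrating from $a$ then gives
\[
W(r)=W(a^+)-\sigma\psi(r).
\]
Everything therefore reduces to computing the boundary value $W(a^+)$ from c).

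\textbf{The limit $W(a^+)$.} If $a=0$, the elementary bound $|W(r)|\le 2|f'(r)|$ (from $\sqrt{f'^2+r^2/4}\ge r/2$) together with $f'(r)\to0$ gives $W(0^+)=0$. If $a>0$, then $\delta_a f'\to+\infty$, so
\[
W(r)=\frac{r\operatorname{sgn}(f')}{\sqrt{1+r^2/(4f'^2)}}\to a\delta_a .
\]
In both cases $W(a^+)=a\delta_a$, with the convention that the term vanishes when $a=0$. Using $\sigma^2=1$,
\[
W(r)=-\sigma\bigl(\psi(r)-a\delta_a\sigma\bigr),\qquad |W(r)|=|\psi(r)-a\delta_a\sigma|.
\]
The same computation at $b$ gives $W(b^-)=b\delta_b$, so $|\psi(b)-a\delta_a\sigma|=b$ by continuity of $\psi$. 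At $r=a$ we have $|\psi(a)-a\delta_a\sigma|=a$ directly, since $\psi(a)=0$.

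\textbf{Items ii and iii.} For $r\in(a,b)$, $f'(r)$ is finite, so $|W(r)|=r|f'|/\sqrt{f'^2+r^2/4}<r$. This gives strict positivity of $r^2-(\psi-a\delta_a\sigma)^2$ on $(a,b)$ and equality exactly at $\{a,b\}$, which is \eqref{eq:existence_condition}. Next, squaring the definition of $W$ gives $W^2\bigl(f'^2+\tfrac{r^2}{4}\bigr)=r^2f'^2$, hence
\[
f'^2=\frac{W^2\,r^2/4}{r^2-W^2}.
\]
This is well defined by ii, so $|f'|=\Psi$ and therefore $f'=\delta\Psi$ with $\delta$ as in \eqref{eq:delta}.

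\textbf{Integrability of $\Psi$ (the expected obstacle).} The formula blows up at the endpoints, so $\Psi\in L^1$ cannot be read off from it. Instead I would use $\Psi=|f'|$. On compact subintervals of $(a,b)$, $f'$ is continuous, hence bounded. Near $b$, and near $a$ when $a>0$, c) implies $f'$ has constant sign on a one-sided neighborhood, so $f$ is monotone there. Then
\[
\int|f'|=\Bigl|\int f'\Bigr|=|f(\text{endpoint})-f(\text{interior point})|,
\]
which is finite by continuity of $f$ on $[a,b]$ (pass to the limit in the integral by monotone convergence). Near $a=0$, $f'$ extends continuously to $0$, so there is no issue. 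Gluing these pieces gives $\Psi\in L^1([a,b])$.
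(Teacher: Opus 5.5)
Your proposal is correct and follows essentially the paper's route: integrate \eqref{eq:ODEgeneralprop} once, identify the constant $a\delta_a$ from c), invert algebraically to get $|f'|=\Psi$, and obtain $\Psi\in L^1$ from $\Psi=|f'|$ together with one-sided monotonicity of $f$ near the endpoints. Two steps are slightly cleaner than the paper's. You get item ii directly from the strict bound $|W(r)|<r$ on $(a,b)$, where the paper uses the squared identity and a contradiction argument. You get $W(0^+)=0$ from $|W|\le 2|f'|$ rather than from a Taylor expansion that uses $C^2$-regularity at $0$.
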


\begin{proof}
    Since $\varphi$ is continuous, then $\psi\in C^1((a,b))\cap C^0([a,b])$.  This proves i). 

    We prove ii). 
    Integrating \eqref{eq:ODEgeneralprop} on $(a,r)$ for $r\in (a,b)$, we obtain
    \begin{equation}
        \psi(r) 
        = -\sigma \frac{r f'(r)}{\sqrt{f'(r)^2 + \frac{r^2}{4}}} +\sigma  C_a,\quad a< r < b,
    \end{equation}
    where $C_a = \lim_{r \to a^+} \frac{r f'(r)}{\sqrt{f'(r)^2 + r^2/4}}$.
    If $a=0$, assumptions a) and c) imply that
   \begin{equation}
       C_0=\lim_{r\to 0^+}\frac{r(f''(0)r+o(r))}{\sqrt{(f''(0)r+o(r))^2+\frac{r^2}{4}}} =\lim_{r\to 0^+}\frac{f''(0)r^2+o(r^2)}{r\sqrt{f''(0)^2+\frac{1}{4}+o(1)}}=0.
   \end{equation}
    If $a>0$, assumption c) implies that 
    \begin{equation}
        C_a = \lim_{r \to a^+} \frac{r \delta_a}{\sqrt{1 + \frac{r^2}{4f'(r)^2}}} = a\delta_a.
    \end{equation}
    We therefore obtain
    \begin{equation}\label{eq:psi_identity_proof}
        \psi(r) = -\sigma \frac{r f'(r)}{\sqrt{f'(r)^2 + \frac{r^2}{4}}}+\sigma a\delta_a,
    \quad r\in (a,b),
    \end{equation}
    that can be rewritten as
\begin{equation}\label{eq:fsigmaab}
    f'^2(r)\bigl[r^2-(\psi(r)-a\delta_a\sigma)^2\bigr]
    =(\psi(r)-a\delta_a\sigma)^2\frac{r^2}{4},
    \quad r\in (a,b).
\end{equation}
Since the right-hand side is nonnegative we have that
\[
    r^2-(\psi(r)-a\delta_a\sigma)^2\ge 0
    \qquad \text{for every } r\in(a,b).
\]

We show that equality holds at the endpoints. At $r=a$, this follows directly from the definition of $\psi$. At $r\to b^-$, using assumption~c) in~\eqref{eq:fsigmaab}, we get
\begin{equation}
    \lim_{r\to b^-}\bigl[r^2-(\psi(r)-a\delta_a\sigma)^2\bigr]
    =
    \lim_{r\to b^-}
    \frac{(\psi(r)-a\delta_a\sigma)^2r^2}{4f'^2(r)}
    =0,
\end{equation}
because $(\psi(r)-a\delta_a\sigma)^2r^2$ is continuous, and hence bounded, on $[a,b]$.

On the other hand, if $r\in(a,b)$ satisfies
\begin{equation}\label{eq:utile}
    r^2=(\psi(r)-a\delta_a\sigma)^2
\end{equation}
equation~\eqref{eq:fsigmaab} yields $\frac{r^2}{4}(\psi(r)-a\delta_a\sigma)^2=0$. Plugging~\eqref{eq:utile} in the latter we thus obtain $r=0$ which is absurd. This proves ii).

    We conclude by proving iii). Observe that $\Psi$ is well defined on the whole $(a,b)$ by~\eqref{eq:existence_condition}.
    By \eqref{eq:fsigmaab} and ii) we therefore deduce that $|f'(r)|=\Psi(r)$, yielding~\eqref{eq:tesiiii}.
    We deduce that for any $c,d\in(a,b)$, $c<d$, we have $\Psi\in L^1([c,d])$ being $f'\in C^0([c,d])$. 

    If $a=0$, assumption c) guarantees that $f'\in C^0([0,b))$, so that $\Psi\in L^1([0,d])$ for any $d\in (0,b)$.
    
    If $a>0$, by assumption c) there exists $r_a>a$ such that for $r\in (a,r_a)$, we have $|f'(r)|=\delta_a f'(r)$. Therefore, for any $r\in (a,r_a)$ we can compute
    \begin{equation}
        \lim_{\eta\to 0^+}\int_{a+\eta}^{r} |\Psi(s)|\,ds
        =\lim_{\eta\to 0^+}\int_{a+\eta}^{r} \delta_a f'(s)\;ds
        =\lim_{\eta\to0^+}\delta_a(f(r)-f(a+\eta))
        \in \R,
    \end{equation}
    thus proving $\Psi\in L^1([a,r])$. 
    
    The same argument proves integrability of $|\Psi|$ on $[r,b]$. This concludes the proof.
\end{proof}

\begin{rem}
    An explicit formula for $f$ satisfying the assumptions of Lemma~\ref{prop:proprietaperodeversgenerale} in terms of $\psi$ can be deduced by integrating  equation~\eqref{eq:tesiiii}. 
    For $r\in[a,b]$ this yields
    \begin{equation}\label{eq:formulaf}
        f(r)=f(b)-\int_r^b\delta(t)\Psi(t)\; dt=f(a)+\int_a^r\delta(t)\Psi(t)\; dt.
    \end{equation}
\end{rem}

\begin{lem}\label{lem:fprime}
Let $\sigma\in\{-1,1\}$. For $b>0$, let $\varphi\in C([0,b])$ and $f\in C([0,b])\cap C^2([0,b))$ be such that \eqref{eq:ODEgeneralprop} is satisfied on $(0,b)$ and 
    \begin{equation}
        \lim_{r\to 0^+}f'(r)=0
    \end{equation}    
Then as $r\to 0^+$ we have
    \begin{equation}\label{eq:asympf'0}
        f'(r)=\frac{-\sigma \varphi(0)}{4}r^2+o(r^2).
    \end{equation}
\end{lem}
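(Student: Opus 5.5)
We derive the expansion from the first integral of \eqref{eq:ODEgeneralprop}, which is available from the proof of Lemma~\ref{prop:proprietaperodeversgenerale} with $a=0$. There, integrating on $(0,r)$ and using $C_0=0$ (which needs only $\lim_{r\to0^+}f'(r)=0$ and $f\in C^2([0,b))$), we obtained
\begin{equation}
    \psi(r)=-\sigma\,\frac{r f'(r)}{\sqrt{f'(r)^2+\frac{r^2}{4}}},\qquad \psi(r)=\int_0^r t\,\varphi(t)\,dt,\qquad r\in(0,b).
\end{equation}
Since $\varphi$ is continuous at $0$, we have $\psi(r)=\frac{\varphi(0)}{2}r^2+o(r^2)$ as $r\to0^+$.

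Next we solve for $f'$. Set $q(r)=f'(r)/r$, which is defined for $r\in(0,b)$. Dividing numerator and denominator by $r$, the identity above becomes
\begin{equation}
    -\sigma\,\frac{r\,q(r)}{\sqrt{q(r)^2+\frac14}}=\frac{\varphi(0)}{2}r^2+o(r^2),
\end{equation}
that is,
\begin{equation}
    \frac{q(r)}{\sqrt{q(r)^2+\frac14}}=-\sigma\frac{\varphi(0)}{2}\,r+o(r).
\end{equation}
The left-hand side therefore tends to $0$, and this forces $q(r)\to0$. As a consequence, $\sqrt{q^2+\frac14}=\frac12+o(1)$. Substituting back gives $q(r)=-\sigma\frac{\varphi(0)}{4}r+o(r)$, and multiplying by $r$ yields \eqref{eq:asympf'0}.

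The only delicate point is justifying that $q$ is bounded, so that the step "left-hand side $\to0$ implies $q\to0$" is valid. This is automatic, because the map $s\mapsto s/\sqrt{s^2+1/4}$ is a homeomorphism from $\R$ onto $(-1,1)$ whose inverse is continuous at $0$. Alternatively, boundedness of $q$ follows directly from $f\in C^2([0,b))$ together with $f'(0)=0$, which give $q(r)\to f''(0)$. There is no other obstacle: the argument uses only the continuity of $\varphi$ at $0$ and the vanishing of the integration constant $C_0$.
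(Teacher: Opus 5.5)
Your proof is correct and follows essentially the same route as the paper: integrate \eqref{eq:ODEgeneralprop} from $0$ with vanishing constant, divide by $r$ to get $\frac{f'(r)}{\sqrt{f'(r)^2+r^2/4}}=-\sigma\varphi(0)\frac{r}{2}+o(r)$ (your $q/\sqrt{q^2+\frac14}$ is exactly this quantity), and then invert. Inverting through the continuity at $0$ of the inverse of $s\mapsto s/\sqrt{s^2+1/4}$ only saves you the paper's case distinction on the sign of $\varphi(0)$.
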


\begin{proof}[Proof of Lemma~\ref{lem:fprime}]
    Since $f\in C^2([0,b))$ we have that 
\begin{equation}
    \lim_{r\to 0^+}\frac{rf'(r)}{\sqrt{f'(r)^2+\frac{r^2}{4}}}=\lim_{r\to 0^+}\frac{r^2f''(0)}{r\sqrt{f''(0)^2+\frac{1}{4}+o(1)}}=0.
\end{equation}
Using also~\eqref{eq:ODEgeneralprop}, this implies that for $r\to 0^+$ we have
\begin{equation}
    \frac{rf'(r)}{\sqrt{f'(r)^2+\frac{r^2}{4}}}=\int_0^r\frac{d}{dt}\left(\frac{tf'(t)}{\sqrt{f'(t)^2+\frac{t^2}{4}}}\right)\;dt
    =\int_0^r(-\sigma\varphi(0)t+o(t))\;dt,
\end{equation}
where in the last equality we used the continuity of $\varphi$ at $0$.
Computing the integral and dividing by $r\to 0^+$ therefore yields:
\begin{equation}
    \frac{f'(r)}{\sqrt{f'(r)^2+\frac{r^2}{4}}}=-\sigma \varphi(0)\frac{r}{2}+o(r),
\end{equation}
If $\varphi(0)\neq 0$ this implies that $\delta(r)=\operatorname{sgn}(-\sigma\varphi(0))$, so that we can rewrite the above as 
\begin{equation}
    f'(r)=\frac{\frac{r}{2}\left(-\sigma\varphi(0)\frac{r}{2}+o(r)\right)}{\sqrt{1-\left(\sigma\varphi(0)\frac{r}{2}+o(r)\right)^2}}=-\sigma \varphi(0)\frac{r^2}{4}+o(r^2).
\end{equation}
The computation directly extends to the case $\varphi(0)=0$ thus proving the statement.
\end{proof}

In the next proposition, we solve the local boundary value problems for the prescribed $g_\varepsilon$-mean curvature equation~\eqref{eq:MCHeis-eps} on intervals $[a,b]\subset[0,R]$, with $\varphi_\varepsilon=\varphi$. In view of the global construction outlined above, the boundary conditions are chosen to match the behavior of $\partial\Omega$ at the endpoints $a$ and $b$ and the proof makes use of the properties in Lemma~\ref{prop:proprietaperodeversgenerale}.

\begin{prop}\label{prop:existence_explicit_epsilon}
    Let $\sigma\in\{-1,1\}$. For $0\le a<b$, let $\varphi\in C([a,b])$ and $f\in C([a,b])$ be such that
    \begin{enumerate}[label=\alph*)]
    \item $f\in C^2([0,b))$ if $a=0$, and $f\in C^2((a,b))$ if $a>0$;
    \item for $r\in (a,b)$ we have
     \begin{equation}\label{eq:ODEgeneralprop2}
        -\frac{\sigma}{r} \frac{\mathrm{d}}{\mathrm{d}r}\left(
        \frac{r f'(r)}{\sqrt{f'(r)^2+\frac{r^2}{4}}}
        \right) = \varphi(r);
    \end{equation}
    \item there exist $\delta_a,\delta_b\in\{-1,1\}$ such that 
    \begin{equation}\label{eq:bcf}
    \begin{cases}
        \displaystyle\lim_{r\to0^+}f'(r)=0,
            &\text{if }a=0,\\[1ex]
        \displaystyle\lim_{r\to a^+}\delta_a f'(r)=+\infty,
            &\text{if }a>0,
    \end{cases}
    \qquad
    \lim_{r\to b^-}\delta_b f'(r)=+\infty.
    \end{equation}
\end{enumerate}

Let $z_a, z_b \in \mathbb{R}$. For any $\varepsilon>0$ consider the prescribed $g_\varepsilon$-mean curvature equation on $(a,b)$ given by
\begin{equation}
    \label{eq:MCHeis-eps-lem}
    -
    \frac{\sigma}{r} \, \partial_r\left( \frac{r f_\varepsilon'(r)}{\sqrt{f_\varepsilon'(r)^2 + \frac{r^2}{4} + \varepsilon^2}} \right)=\varphi,
\end{equation}
together with one of the following boundary conditions 
\begin{align}
    \label{eq:BC0} \tag{BC$_0$}
    f_\varepsilon'(0)=0, 
    & \quad \lim_{r\to b^-} \delta_b f_\varepsilon'(r)=+\infty, 
     \quad f_\varepsilon(b)=z_b\quad\text{with}\quad a=0;\\
    \label{eq:BCa} \tag{BC$_a$}
    \lim_{r\to a^+} \delta_a f_\varepsilon'(r)=+\infty, 
    & \quad  \lim_{r\to b^-} \delta_b f_\varepsilon'(r)=+\infty, 
    \quad f_\varepsilon(a)=z_a\quad\text{with}\quad a>0;\\
    \label{eq:BCb} \tag{BC$_b$}
    \lim_{r\to a^+} \delta_af_\varepsilon'(r)=+\infty, 
    & \quad \lim_{r\to b^-} \delta_b f_\varepsilon'(r)=+\infty,
    \quad f_\varepsilon(b)=z_b\quad\text{with}\quad a>0.
\end{align}
Then the corresponding boundary value problem admits a unique solution 
\(f_\varepsilon\in C^2((a,b))\cap C^0([a,b])\) if $a>0$ and \(f_\varepsilon\in C^2([0,b))\cap C^0([0,b])\) if $a=0$, given by the formula
\begin{equation}\label{eq:fepsb}
    f_\varepsilon(r)
    =
    z_b-\int_r^b f'(t)\sqrt{1+\frac{4\varepsilon^2}{t^2}}\,dt
\end{equation}
in cases~\eqref{eq:BC0} and~\eqref{eq:BCb}, and by
\begin{equation}\label{eq:fepsa}
    f_\varepsilon(r)
    =
    z_a+\int_a^r f'(t)\sqrt{1+\frac{4\varepsilon^2}{t^2}}\,dt
\end{equation}
in case~\eqref{eq:BCa}. 

Moreover, if $a=0$ the following Taylor expansion as $r\to 0^+$ holds true
\begin{equation}\label{eq:tayloreps0}
        f_\varepsilon'(r)=-\frac{\sigma\varphi(0)\varepsilon}{2}r+o(r).
\end{equation}

Finally, as $\varepsilon\to 0^+$, $f_\varepsilon$ converges uniformly on $[a,b]$ to the function $f_a=f-f(a)+z_a$ in case \eqref{eq:BCa}, and to the function $f_b=f-f(b)+z_b$ in case \eqref{eq:BC0} and \eqref{eq:BCb}.

\end{prop}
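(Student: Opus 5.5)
The plan is to integrate \eqref{eq:MCHeis-eps-lem} once, exactly as in the proof of Lemma~\ref{prop:proprietaperodeversgenerale}, and compare the result with the first integral of \eqref{eq:ODEgeneralprop2} satisfied by $f$. Multiplying by $r$ and integrating on $(a,r)$ gives
\begin{equation}
\frac{r f_\varepsilon'(r)}{\sqrt{f_\varepsilon'(r)^2+\frac{r^2}{4}+\varepsilon^2}} = -\sigma\psi(r)+C^\varepsilon_a ,
\end{equation}
with $\psi$ as in \eqref{eq:psi}. The boundary condition at $a$ fixes the constant: if $a>0$, the condition $\delta_a f_\varepsilon'\to+\infty$ forces $C_a^\varepsilon=a\delta_a$, and if $a=0$, the condition $f_\varepsilon'(0)=0$ forces $C_0^\varepsilon=0$. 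These are the same constants obtained for $f$. Hence $f$ and $f_\varepsilon$ share the quantity
\[
Q(r):=\sigma a\delta_a-\sigma\psi(r)=\frac{rf'}{\sqrt{f'^2+r^2/4}}.
\]
Solving for the derivatives gives
\[
|f_\varepsilon'|=\frac{|Q|\sqrt{r^2/4+\varepsilon^2}}{\sqrt{r^2-Q^2}}
\qquad\text{and}\qquad
|f'|=\frac{|Q|\,r/2}{\sqrt{r^2-Q^2}} .
\]
Both have the sign of $Q$. This yields the pointwise identity $f_\varepsilon'=f'\sqrt{1+4\varepsilon^2/r^2}$ on $(a,b)$. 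By Lemma~\ref{prop:proprietaperodeversgenerale}(ii), the denominator vanishes only at $a$ and $b$, so $f_\varepsilon'$ is finite inside the interval.

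Integrating this identity from the endpoint where the height is prescribed gives \eqref{eq:fepsb} or \eqref{eq:fepsa}, which proves uniqueness. For existence, I will verify directly that the displayed formula solves the problem. Define $f_\varepsilon'$ through the identity above. Reversing the algebra shows that the first integral holds, and differentiating it recovers \eqref{eq:MCHeis-eps-lem}. The boundary blow-ups $\delta f_\varepsilon'\to+\infty$ at nonzero endpoints are inherited from $f$, because the factor $\sqrt{1+4\varepsilon^2/t^2}$ is bounded and positive there. Regularity $f_\varepsilon\in C^2((a,b))$ holds because $f'\in C^1$ there. Continuity up to $[a,b]$ follows from integrability: near a nonzero endpoint, $f'\sqrt{1+4\varepsilon^2/t^2}\le C|f'|$, and $f'=\delta\Psi\in L^1$ by Lemma~\ref{prop:proprietaperodeversgenerale}(iii).

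The case $a=0$ is where I expect the main obstacle, since the factor $\sqrt{1+4\varepsilon^2/r^2}\sim 2\varepsilon/r$ is singular at $0$. I will use Lemma~\ref{lem:fprime}, which gives $f'(r)=-\sigma\varphi(0)r^2/4+o(r^2)$. Then
\[
f_\varepsilon'(r)=\Bigl(-\tfrac{\sigma\varphi(0)}{4}r^2+o(r^2)\Bigr)\Bigl(\tfrac{2\varepsilon}{r}+O(r)\Bigr)=-\tfrac{\sigma\varphi(0)\varepsilon}{2}r+o(r),
\]
which is \eqref{eq:tayloreps0}. In particular $f_\varepsilon'(0)=0$, and $f_\varepsilon$ is integrable at $0$. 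For $C^2$ regularity at $0$, I need $f_\varepsilon'(r)/r$ to have a limit, which holds with value $-\sigma\varphi(0)\varepsilon/2$, and $f_\varepsilon''$ to be continuous at $0$. I would obtain the latter by writing
\[
f_\varepsilon''=\tfrac{d}{dr}\bigl(f'\sqrt{1+4\varepsilon^2/r^2}\bigr)
\]
and using the ODE to express $(rf'/\sqrt{\cdot})'=-\sigma r\varphi$. Alternatively, I would express $f_\varepsilon'$ via $Q$, noting $Q(r)=-\sigma\int_0^r t\varphi$. Then $Q/r^2\to-\sigma\varphi(0)/2$, and $(Q/r^2)'$ must be controlled. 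This is the delicate point, and it may only give $C^2$ in a weak sense, meaning the limit $f_\varepsilon''(0^+)$ exists; I will settle for what the radial formula requires, namely continuity of $f_\varepsilon''$ on $[0,b)$ via l'Hôpital applied to $Q$.

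Finally, for uniform convergence I take case \eqref{eq:fepsb}; case \eqref{eq:fepsa} is symmetric. We have
\[
|f_\varepsilon(r)-f_b(r)|\le\int_a^b|f'(t)|\Bigl(\sqrt{1+4\varepsilon^2/t^2}-1\Bigr)dt .
\]
The integrand tends to $0$ pointwise. It is dominated by $|f'|\sqrt{1+4/t^2}$ for $\varepsilon\le1$. This majorant is in $L^1$ near nonzero endpoints by Lemma~\ref{prop:proprietaperodeversgenerale}(iii), and near $0$ because $|f'|/t=O(t)$. Dominated convergence then gives convergence uniformly in $r$.
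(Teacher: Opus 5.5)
Your proposal is correct and follows the paper's proof essentially step for step. The shared first integral gives $f_\varepsilon'=f'\sqrt{1+4\varepsilon^2/r^2}$, uniqueness comes from the constant $C_a$ being fixed by the boundary behaviour, \eqref{eq:tayloreps0} follows from Lemma~\ref{lem:fprime}, and dominated convergence gives the uniform limit.

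The step you left hedged, continuity of $f_\varepsilon''$ at $0$, is not actually delicate. The paper solves \eqref{eq:MCHeis-eps-lem} for $f_\varepsilon''$ as in \eqref{eq:fepssecond} and lets $r\to0^+$, using $f_\varepsilon'\to0$ and $f_\varepsilon'/r\to-\sigma\varphi(0)\varepsilon/2$. In your $Q$-route, the term you flagged is $r\,(Q/r^2)'=-\sigma\varphi(r)-2Q/r^2$, which tends to $0$. Since $f_\varepsilon'$ is continuous at $0$, the existence of $\lim_{r\to0^+}f_\varepsilon''(r)$ already gives genuine $C^2([0,b))$ by the mean value theorem, so no ``weak sense'' is involved.

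One small correction: your $Q$ should read $a\delta_a-\sigma\psi$, not $\sigma a\delta_a-\sigma\psi$.
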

\begin{proof}
\textit{Step I: Existence of a solution.}
Let $\psi:[a,b]\to\mathbb R$, and
$\Psi:(a,b)\to\mathbb R$ be the functions defined in~\eqref{eq:psi}, and~\eqref{eq:Psi}, respectively. For $\varepsilon>0$, set
\begin{equation}
    \Psi_\varepsilon(r)
    =
    \frac{2\Psi(r)}{r}\sqrt{\frac{r^2}{4}+\varepsilon^2},
    \qquad r\in(a,b).
\end{equation}
We first prove that
\begin{equation}\label{eq:claimPsieps}
    \Psi_\varepsilon\in L^1([a,b]).
\end{equation}
Fix $c\in(a,b)$. Since
\begin{equation}
    \frac{1}{r}\sqrt{\frac{r^2}{4}+\varepsilon^2}
    \le
    \frac{1}{c}\sqrt{\frac{b^2}{4}+\varepsilon^2}
    \qquad\text{for } r\in[c,b],
\end{equation}
and since $\Psi\in L^1([a,b])$ by Lemma~\ref{prop:proprietaperodeversgenerale},
statement~iii), we have $\Psi_\varepsilon\in L^1([c,b])$.

It remains to consider the behavior near $a$. If $a>0$, the same estimate applies on
$[a,c]$, and therefore $\Psi_\varepsilon\in L^1([a,c])$. Assume instead that $a=0$.
Then
\begin{equation}
    \label{eq:fepsprime0}
    \lim_{r\to0^+}\frac{\psi(r)}{r}
    =
    \lim_{r\to0^+}\frac{1}{r}\int_0^r t\varphi(t)\,dt
    =0,
\end{equation}
because $\varphi$ is continuous at $0$. Hence
\begin{equation}\label{eq:Psisur}
    \lim_{r\to0^+}\frac{2\Psi(r)}{r}
    =
    \lim_{r\to0^+}
    \frac{|\psi(r)|}{\sqrt{r^2-\psi(r)^2}}
    =
    \lim_{r\to0^+}
    \frac{|\psi(r)|}{r}
    \frac{1}{\sqrt{1-\left(\frac{\psi(r)}{r}\right)^2}}
    =0.
\end{equation}
Thus $\Psi_\varepsilon$ extends continuously to $r=0$ by setting
$\Psi_\varepsilon(0)=0$, and in particular $\Psi_\varepsilon\in L^1([0,c])$.
This proves~\eqref{eq:claimPsieps}.

Let $\delta:[a,b]\to\{-1,0,1\}$ be the function defined in~\eqref{eq:delta}. For $\varepsilon>0$, the candidate solution $f_\varepsilon:[a,b]\to\mathbb R$ given by formula~\eqref{eq:fepsb} or \eqref{eq:fepsa} reads 
\begin{equation}
    f_\varepsilon(r)
    =
    z_b-\int_r^b \delta(t)\Psi_\varepsilon(t)\,dt
\end{equation}
in cases~\eqref{eq:BC0} and~\eqref{eq:BCb}, and by
\begin{equation}
    f_\varepsilon(r)
    =
    z_a+\int_a^r \delta(t)\Psi_\varepsilon(t)\,dt
\end{equation}
in case~\eqref{eq:BCa}. By~\eqref{eq:claimPsieps} $f_\varepsilon$ is well defined and continuous on $[a,b]$. Moreover, by~\eqref{eq:tesiiii} and assumption~a), $\delta\Psi\in C^1((a,b))$, hence also $\delta\Psi_\varepsilon\in C^1((a,b))$. Therefore $f_\varepsilon\in C^2((a,b))$ and we can 
differentiate its defining formula, obtaining
\begin{equation}\label{eq:fepsprime}
    f_\varepsilon'(r)=\delta(r)\Psi_\varepsilon(r),
    \qquad r\in(a,b).
 \end{equation}

If $a=0$, we show that $f_\varepsilon$ is $C^2$ at $0$. In fact, the comparison of~\eqref{eq:tesiiii} and \eqref{eq:fepsprime} yields
\begin{equation} f_\varepsilon'(r)=f'(r)
\sqrt{1+\frac{4\varepsilon^2}{r^2}}.
\end{equation}
By~\eqref{eq:asympf'0}, we therefore obtain~\eqref{eq:tayloreps0}, that implies that 
\begin{equation}\label{eq:cond0}
    \lim_{r\to 0^+}f_\varepsilon'(r)=0,\quad 
    {f_\varepsilon}''(0)=\lim_{r\to 0^+}\frac{f_\varepsilon'(r)}{r}=-\frac{\sigma\varphi(0)\varepsilon}{2},
\end{equation}
hence yielding  $C^2$-regularity at $0$.
Moreover, rewriting~\eqref{eq:MCHeis-eps-lem} as
\begin{equation}\label{eq:fepssecond}
    f_\varepsilon''(r)
    =
    -\frac{1}{\frac{r^2}{4}+\varepsilon^2}
    \left\{
        \sigma\varphi(r)
        \left(
            f_\varepsilon'(r)^2+\frac{r^2}{4}+\varepsilon^2
        \right)^{3/2}
        +
        \frac{f_\varepsilon'(r)^3}{r}
        +
        \varepsilon^2\frac{f_\varepsilon'(r)}{r}
    \right\},
\end{equation}
and using~\eqref{eq:cond0}, we obtain
\[
    \lim_{r\to0^+}f_\varepsilon''(r)
    =
    -\frac{1}{\varepsilon^2}
    \left(
        \sigma\varphi(0)\varepsilon^3
        -
        \frac{\sigma\varphi(0)\varepsilon^3}{2}
    \right)
    =
    -\frac{\sigma\varphi(0)\varepsilon}{2}
    =
    f_\varepsilon''(0).
\]
Therefore \(f_\varepsilon\in C^2([0,b))\).

We show that $f_\varepsilon$ solves~\eqref{eq:MCHeis-eps-lem}. 
By the definition of $\Psi_\varepsilon$, a direct computation gives
\begin{equation}
    f_\varepsilon'(r)^2+\frac{r^2}{4}+\varepsilon^2
    =
    \frac{r^2\left(\frac{r^2}{4}+\varepsilon^2\right)}
    {r^2-\left(\psi(r)-a\delta_a\sigma\right)^2},
    \qquad r\in(a,b).
\end{equation}
In this computation, the case $\delta(r)=0$ is covered by
formula~\eqref{eq:tesiiii}, which gives $\psi(r)-a\delta_a\sigma=0$. Hence
\[
    \frac{r f_\varepsilon'(r)}
    {\sqrt{f_\varepsilon'(r)^2+\frac{r^2}{4}+\varepsilon^2}}
    =
    \delta(r)\left|\psi(r)-a\delta_a\sigma\right|,
    \qquad r\in(a,b).
\]
By~\eqref{eq:tesiiii}, the right-hand side agrees with the corresponding
quantity for the limiting profile, namely
\begin{equation}
    \frac{r f'(r)}{\sqrt{f'(r)^2+\frac{r^2}{4}}}=\delta(r)\left|\psi(r)-a\delta_a\sigma\right|,
    \qquad r\in(a,b).
\end{equation}
Hence, using~\eqref{eq:ODEgeneralprop2}, we get
\begin{equation}
    -\frac{\sigma}{r}\frac{\mathrm d}{\mathrm dr}
    \left(
    \frac{r f_\varepsilon'(r)}
    {\sqrt{f_\varepsilon'(r)^2+\frac{r^2}{4}+\varepsilon^2}}
    \right)
    =
    -\frac{\sigma}{r}\frac{\mathrm d}{\mathrm dr}
    \left(
    \frac{r f'(r)}
    {\sqrt{f'(r)^2+\frac{r^2}{4}}}
    \right)
    =
    \varphi(r),
    \qquad r\in(a,b).
\end{equation}
Thus $f_\varepsilon$ satisfies~\eqref{eq:MCHeis-eps-lem}.


We finally check the boundary conditions. All derivatives are taken in the open
interval $(a,b)$, and the conditions involving $f_\varepsilon'$ are understood
as one-sided limits. In case~\eqref{eq:BC0}, the condition at $a=0$ follows
from~\eqref{eq:cond0}. At $b$, condition~\eqref{eq:bcf} gives
$\delta(r)=\delta_b$ for $r$ sufficiently close to $b$. Moreover, by
Lemma~\ref{prop:proprietaperodeversgenerale}, $\psi$ is continuous at $b$ and
\eqref{eq:existence_condition} holds. Hence
\begin{equation}
    \lim_{r\to b^-}\delta_b f_\varepsilon'(r)
    =
    \lim_{r\to b^-}
    \frac{|\psi(r)|}{\sqrt{r^2-\psi(r)^2}}
    \sqrt{\frac{r^2}{4}+\varepsilon^2}
    =
    +\infty.
\end{equation}
The prescribed value $f_\varepsilon(b)=z_b$ follows directly from the definition
of $f_\varepsilon$.

The same argument gives the boundary condition at $b$ in case~\eqref{eq:BCb},
with $\psi(r)$ replaced by $\psi(r)-a\delta_a\sigma$. Similarly,
\eqref{eq:bcf} gives $\delta(r)=\delta_a$ for $r$ sufficiently close to $a$,
and the same computation yields the boundary condition at $a$ in
cases~\eqref{eq:BCa} and~\eqref{eq:BCb}. The prescribed values
$f_\varepsilon(a)=z_a$ in case~\eqref{eq:BCa} and $f_\varepsilon(b)=z_b$ in
case~\eqref{eq:BCb} follow from the definition of $f_\varepsilon$.
\smallskip

\textit{Step II: Uniqueness of the solution.} 
We prove uniqueness. Let \(h\) be any solution of the boundary value problem
under consideration. Set
\[
    W_h(r)=
    \frac{r h'(r)}
    {\sqrt{h'(r)^2+\frac{r^2}{4}+\varepsilon^2}},
    \qquad r\in(a,b).
\]
Then the ODE \eqref{eq:MCHeis-eps-lem} rewrites as 
\(W_h'(r)=-\sigma r\varphi(r)\). Hence, for every \(r\in(a,b)\), we have 
\[
    W_h(r)=C_a-\sigma\int_{a}^r t\varphi(t)\,dt,
\]
where 
\begin{equation}
    C_a = \lim_{r \to a^+} \frac{r h'(r)}{\sqrt{h'(r)^2 + \frac{r^2}{4}+\varepsilon^2}}=a\delta_a.
\end{equation}
Here, the computation of $C_a$ is independent of $h$ and only depends on the boundary conditions under consideration. 
Consequently, 
\[
    W_h(r)
    = W_{f_\varepsilon}(r)\quad \text{ for any } r\in (a,b).
\]

It remains to observe that, for each fixed \(r\in(a,b)\), the map
\[
    p\mapsto
    \frac{r p}{\sqrt{p^2+\frac{r^2}{4}+\varepsilon^2}}
\]
is strictly increasing from \(\mathbb R\) onto \((-r,r)\). Therefore $h'(r)=f_\varepsilon'(r)$ for any $r\in (a,b)$, and thus $h$ and $f_\varepsilon$ may differ only by an additive
constant. The normalization condition \(h(a)=z_a\) or \(h(b)=z_b\) respectively,
fixes this constant. This proves uniqueness.
\smallskip

\textit{Step III: Uniform convergence as $\varepsilon\to 0^+$.}
We prove the uniform convergence in the case of \eqref{eq:BC0} and
\eqref{eq:BCb}; the proof for \eqref{eq:BCa} is analogous.

We first claim that
\begin{equation}
\label{eq:claim_uniform}
    \lim_{\varepsilon\to 0^+}
    \int_a^b |\Psi(t)|
    \left(
    \frac{2}{t}\sqrt{\frac{t^2}{4}+\varepsilon^2}-1
    \right)\,dt=0 .
\end{equation}
Indeed, this implies
\begin{equation}
    f_\varepsilon \to f-f(b)+z_b
    \qquad\text{uniformly on }[a,b],
\end{equation}
since
\begin{equation}
\begin{split}
    \sup_{r\in[a,b]}
    |f_\varepsilon(r)-f(r)+f(b)-z_b|
    &=
    \sup_{r\in[a,b]}
    \left|
    \int_r^b \delta(t)(\Psi_\varepsilon(t)-\Psi(t))\,dt
    \right|  \\
    &\le
    \int_a^b |\Psi(t)|
    \left(
    \frac{2}{t}\sqrt{\frac{t^2}{4}+\varepsilon^2}-1
    \right)\,dt .
\end{split}
\end{equation}

It remains to prove \eqref{eq:claim_uniform}. For every fixed
$t\in(a,b]$, one has
\begin{equation}
    \frac{2}{t}\sqrt{\frac{t^2}{4}+\varepsilon^2}-1
    \longrightarrow 0
    \qquad\text{as }\varepsilon\to0^+ .
\end{equation}
Thus, \eqref{eq:claim_uniform} follows by dominated convergence once we
find an $L^1$-dominating function independent of $\varepsilon$, for
$\varepsilon$ sufficiently small.

If $a>0$, then for $0<\varepsilon\le1$ and every $t\in[a,b]$,
\begin{equation}
    0\le
    \frac{2}{t}\sqrt{\frac{t^2}{4}+\varepsilon^2}-1
    \le
    \frac{2}{a}\sqrt{\frac{b^2}{4}+1}-1 .
\end{equation}
Therefore the integrand is bounded by a constant multiple of $|\Psi|$.
Since $\Psi\in L^1([a,b])$
by Lemma~\ref{prop:proprietaperodeversgenerale}, statement iii), the
conclusion follows by dominated convergence.

Let now $a=0$. By \eqref{eq:Psisur}, the map
$t\mapsto \Psi(t)/t$ extends continuously at $t=0$. In particular,
\begin{equation}\label{eq:Psisut}
    \frac{\Psi(t)}{t}\in L^1([0,b]).
\end{equation}
For $0<\varepsilon\le1$, we have
\begin{equation}
\begin{split}
    0\le
    \frac{2}{t}\sqrt{\frac{t^2}{4}+\varepsilon^2}-1
    &=
    \frac{2}{t}
    \left(
    \sqrt{\frac{t^2}{4}+\varepsilon^2}-\frac t2
    \right) \le
    \frac{2}{t}
    \left(
    \sqrt{\frac{t^2}{4}+1}-\frac t2
    \right).
\end{split}
\end{equation}
Moreover,
\begin{equation}
    \sqrt{\frac{t^2}{4}+1}-\frac t2
    =
    \frac{1}{
    \sqrt{\frac{t^2}{4}+1}+\frac t2
    }
    \le 1 .
\end{equation}
Hence
\begin{equation}
    |\Psi(t)|
    \left(
    \frac{2}{t}\sqrt{\frac{t^2}{4}+\varepsilon^2}-1
    \right)
    \le
    2\left|\frac{\Psi(t)}{t}\right|.
\end{equation}
Therefore by~\eqref{eq:Psisut}, dominated
convergence gives \eqref{eq:claim_uniform} with $a=0$ and the proof is concluded.
\end{proof}

\begin{proof}[Proof of Proposition~\ref{thm:prescrmcanalit}]
\textit{Step I: Construction of the approximants $\Sigma_j^\varepsilon$.} For \(\varepsilon>0\), we construct the surfaces
\(\Sigma_1^\varepsilon,\dots,\Sigma_N^\varepsilon\) by rotation around the
\(z\)-axis of the graphs \(\{z=f_j^\varepsilon\}\), where
each \(f_j^\varepsilon\in C^2(I_j)\cap C^0(\bar I_j)\) solves \eqref{eq:MCHeis-eps-lem} on \(I_j\) with
\(\sigma=\sigma_j\), for \(j=1,\dots,N\).

We start from an equator of \(\Omega\), namely from \(j=j_R\). 
We define 
\(f_{j_R}^\varepsilon\in C^2((a_{j_R-1},R))\cap C^0([a_{j_R-1},R])\) as the unique solution to \eqref{eq:MCHeis-eps-lem} with \(\sigma=\sigma_{j_R}=1\),  with boundary conditions chosen according to whether \(a_{j_R-1}=0\) or \(a_{j_R-1}>0\).

If \(j_R=1\), so that \(a_{j_R-1}=0\), we impose the boundary conditions~\eqref{eq:BC0} with \(z_b=f_{j_R}(R)\) and \(\delta_b=-1\).

If \(j_R\neq 1\), so that \(a_{j_R-1}>0\), we consider boundary conditions~\eqref{eq:BCb}, choosing the boundary data so that the behavior of \(f_{j_R}^\varepsilon\) at \(r=a_{j_R-1}\) and \(r=R\) matches that of \(f_{j_R}\). Namely,
\begin{equation}
    a=a_{j_R-1}, \quad
    b=a_{j_R}=R, \quad
    z_b=f_{j_R}(R), \quad
    \delta_a=\delta_{j_R}(a_{j_R-1})
    , \quad
    \delta_b=-1,
\end{equation}
as follows from~\eqref{eq:bcbR}. 
Proposition~\ref{prop:existence_explicit_epsilon}, then yields the explicit formula~\eqref{eq:fepsb} fo rthe solution \(f_{j_R}^\varepsilon\).
    
We proceed recursively for \(j=j_R-1,\dots,2\). Having constructed
\(f_{j+1}^{\varepsilon}\), we define \(f_j^{\varepsilon}\) as the unique
solution of \eqref{eq:MCHeis-eps-lem} on \(I_j\), with
\(\sigma=\sigma_j\), satisfying the matching condition with
\(f_{j+1}^{\varepsilon}\) at the common endpoint \(a_j\), and whose
derivative has the same blow-up signs at both endpoints as \(f_j'\). Namely, if \(\sigma_{j}=1\), then \(I_{j}=(a_{j-1},a_{j})\), and we impose the boundary conditions \eqref{eq:BCb} with
\begin{equation}
    a=a_{j-1}, \quad
    b=a_{j}, \quad
    z_b=f_{j+1}^{\varepsilon}(a_{j}), \quad
    \delta_a=\delta_j(a_{j-1})
    , \quad
    \delta_b=\delta_j(a_{j})
    .
\end{equation}
If instead \(\sigma_{j}=-1\), then \(I_{j}=(a_{j},a_{j-1})\), and we impose the boundary conditions \eqref{eq:BCa} with
\begin{equation}
    a=a_{j}, \quad
    b=a_{j-1}, \quad
    z_a=f_{j+1}^{\varepsilon}(a_{j}), \quad
    \delta_a=\delta_j(a_j)
    , \quad
    \delta_b=\delta_j(a_{j-1})
    .
\end{equation}

Proceeding recursively, we eventually reach \(I_1=(0,a_1)\). There we solve \eqref{eq:MCHeis-eps-lem} with \(\sigma=\sigma_1\), and impose the boundary conditions \eqref{eq:BC0} with \(b=a_1\) and \(\delta_b=\operatorname{sgn}(\gamma_2'(\alpha_1))\).

Arguing in the same way on the remaining intervals \(I_j\), \(j=j_R+1,\dots,N\), we define \(f_j^\varepsilon\) for every \(j=1,\dots,N\). The boundary data are chosen recursively so that adjacent profiles agree at each common endpoint \(a_j\). Thus the adjacent profiles glue continuously at every junction radii. For $j=1,\dots,N$ we then set
\begin{equation}
    \Sigma_j^\varepsilon
    =
    \left\{
        z=f_j^\varepsilon(r(x,y)),
        \ r(x,y)\in I_j
    \right\}.
\end{equation}
\smallskip
    
\textit{Step II: Uniform convergence and absence of self intersections among the $\Sigma_j^\varepsilon$.} We first claim that for any $j=1,\dots,N$ the function $f_j^\varepsilon$ converges to the function $f_j$ uniformly on $\bar I_j$ as $\varepsilon\to 0^+$.
If $j=j_R$ this is proved in Proposition~\ref{prop:existence_explicit_epsilon} as the boundary condition at $b=R$ is $z_b=f_{j_R}(a_{j_R})$. 
We proceed by (finite) induction on $j$ running backwards from $j_R-1$ to $1$. The same reasoning applies to $j$ running forwards from $j_R+1$ to $N$. 

Assume that $f_j^\varepsilon$ converges to $f_j$ uniformly on $\bar I_j$ as $\varepsilon\to 0^+$. 
We now consider $\bar I_{j-1}$. 
If $\sigma_{j-1}=1$, we have  $\bar I_{j-1}=[a_{j-2},a_{j-1}]$ and by~\eqref{eq:fepsb} and \eqref{eq:tesiiii}, we obtain
\begin{equation}\label{eq:cvunif}
\begin{split}
    \sup_{\bar I_{j-1}}\left|f_{j-1}^\varepsilon-f_{j-1}\right|&
    =\sup_{r\in \bar I_{j-1}}\left|f_j^\varepsilon(a_{j-1})-f_j(a_{j-1})+\int_r^{a_{j-1}} f_{j-1}'(t)\left(1-\sqrt{1+\frac{4\varepsilon^2}{t^2}}\right)\;dt\right|\\
    &\le |f_j^\varepsilon(a_{j-1})-f_j(a_{j-1})|+\sup_{r\in \bar I_{j-1}}\int_r^{a_{j-1}} |\Psi_{j-1}(t)|\left(\sqrt{1+\frac{4\varepsilon^2}{t^2}}-1\right)\;dt\\
    &\le |f_j^\varepsilon(a_{j-1})-f_j(a_{j-1})|+\int_{I_{j-1}}|\Psi_{j-1}(t)|\left(\sqrt{1+\frac{4\varepsilon^2}{t^2}}-1\right)\;dt,
\end{split}
\end{equation}
where $\Psi_{j-1}\in L^1(I_{j-1})$ is the function defined in~\eqref{eq:Psi} with $a=a_{j-2}$, $b=a_{j-1}$ and $\sigma=\sigma_{j-1}$. Hence the right-hand side of~\eqref{eq:cvunif} tends to (0): the first term vanishes by the induction hypothesis, while the integral term tends to (0) by Lemma~\ref{prop:proprietaperodeversgenerale}, iii., and the dominated convergence theorem.

If $\sigma_{j-1}=-1$, the same reasoning applies using formula~\eqref{eq:fepsa} instead of \eqref{eq:fepsb} and the claim is proved.

We now show that, for $\varepsilon>0$ small enough, distinct pieces intersect only at the prescribed junctions. Let $j\neq h$. If $I_{j}\cap I_{h}=\emptyset$, $\Sigma_{j}^\varepsilon\cap\Sigma_{h}^\varepsilon=\emptyset$. We therefore assume 
 \begin{equation}\label{eq:nonempty}
     I_{j}\cap I_{h}\neq \emptyset. 
 \end{equation}
 
 We first treat adjacent profiles near the junction radii, namely $h=j+1$. By formulas~\eqref{eq:fepsa} and \eqref{eq:fepsb} we have that $\operatorname{sgn}({f_j^\varepsilon}')=\operatorname{sgn}({f_j}')$ does not depend on $\varepsilon>0$.
 By~\eqref{eq:monotonaj}, we thus have that there exists $\rho=\min_j\rho_j>0$ such that for every $\varepsilon>0$
 \begin{equation}\label{eq:defrho}
\begin{aligned}
    \delta_j(a_j){f_j^\varepsilon}'>0 \quad &\text{on}\quad I_j\cap(a_j-\rho,a_j+\rho),\\
    \delta_{j+1}(a_j){f_{j+1}^\varepsilon}'>0 \quad &\text{on}\quad I_{j+1}\cap(a_j-\rho,a_j+\rho).
\end{aligned}
\end{equation}
 Moreover, since $I_j\cap I_{j+1}\neq\emptyset$, \(a_j\) is a common endpoint of the two intervals, which lie on the same side of \(a_j\). 
Since \(I_j\) and \(I_{j+1}\) lie on the same side of \(a_j\),
we have \(\sigma_{j+1}=-\sigma_j\). Hence
\eqref{eq:junc_deltasigma} gives
\[
    \delta_j(a_j)\delta_{j+1}(a_j)=-1.
\]
 Therefore, calling 
 $z_{j}^\varepsilon
 =f_{j}^\varepsilon(a_{j})=f_{j+1}^\varepsilon(a_{j})$,
 we have that 
 \begin{equation}
     (f_{j}^\varepsilon-z_{j}^\varepsilon)(f_{j+1}^\varepsilon-z_{j}^\varepsilon)<0 \quad \text{in} \quad (a_j-\rho,a_j+\rho)\cap I_{j}\cap I_{j+1},
 \end{equation}
 so that $f_j^\varepsilon(r)\neq f_{j+1}^\varepsilon(r)$ for all $r\in (a_j-\rho,a_j+\rho)\cap I_{j}\cap I_{j+1}$ and for every $j=1,\dots, N-1$.
 
It remains to exclude intersections outside the neighborhoods
\(U_j=(a_j-\rho,a_j+\rho)\) and between non-consecutive profiles. For
\(j\neq h\), we introduce compact sets \(K_{j,h}\subset \bar I_j\cap\bar I_h\)
by setting
\[
K_{j,h}=
\begin{cases}
\bar I_j\cap\bar I_h\setminus U_j, & h=j+1,\\
\bar I_j\cap\bar I_h\setminus U_h, & h=j-1,\\
\bar I_j\cap\bar I_h, & |j-h|>1.
\end{cases}
\]
The limit profiles \(f_j\) and \(f_h\) have disjoint graphs on \(K_{j,h}\):
indeed, intersections between consecutive pieces can occur only at the
junction radii, which have been removed, while non-consecutive pieces are
disjoint by the simplicity of \(\gamma\). By compactness and since the number
of pairs is finite, there exists \(\eta>0\) such that
\[
|f_j(r)-f_h(r)|\ge \eta
\]
for every \(j\neq h\) and every \(r\in K_{j,h}\).
By the uniform convergence of $f_j^\varepsilon$ to $f_j$ on $\bar I_j$, and using again the finiteness of $N$, for $\varepsilon>0$ sufficiently small we have
\begin{equation}
    \sup_{\bar I_j}|f_j^\varepsilon-f_j|<\frac{\eta}{3}
    \qquad
    \text{for every } j=1,\dots,N.
\end{equation}
Therefore, for every $j\neq h$ and every $r\in K_{j,h}$, using the inequality $|A+B-C|\ge |A|-|B|-|C|$ with $A=f_j(r)-f_h(r)$, $B=f_j^\varepsilon(r)-f_j(r)$ and $C=f_h^\varepsilon(r)-f_h(r)$, we obtain
\begin{equation}
\begin{aligned}
    |f_{j}^\varepsilon(r)-f_{h}^\varepsilon(r)|
    &\geq |f_{j}(r)-f_{h}(r)|
    - |f_{j}^\varepsilon(r)-f_{j}(r)|
    - |f_{h}^\varepsilon(r)-f_{h}(r)| \\
    &\geq \eta-\frac{\eta}{3}-\frac{\eta}{3}>0.
\end{aligned}
\end{equation}
Thus no intersections occur outside the prescribed junctions. 

\smallskip

\textit{Step III: Gluing and regularity of the approximating boundary.}
Set
\begin{equation}\label{eq:boundary-epsilon}
    \Sigma_\varepsilon
    =
    \bigcup_{j=1}^N\overline{\Sigma_j^\varepsilon}.
\end{equation}
By construction, the closures of two consecutive pieces meet along
the corresponding junction circle:
\begin{equation}\label{eq:junction-circle-epsilon}
    \overline{\Sigma_j^\varepsilon}
    \cap
    \overline{\Sigma_{j+1}^\varepsilon}
    =
    C_j^\varepsilon,
    \qquad j=1,\ldots,N-1,
\end{equation}
where
\[
    C_j^\varepsilon
    =
    \bigl\{
        (x,y,z_j^\varepsilon)\in\R^3:
        x^2+y^2=a_j^2
    \bigr\},
    \qquad
    z_j^\varepsilon
    =
    f_j^\varepsilon(a_j)
    =
    f_{j+1}^\varepsilon(a_j).
\]
The closures of the first and last pieces contain the two poles on
the rotation axis. By Step~II, no other intersections occur. Hence
\(\Sigma_\varepsilon\) is an embedded topological sphere. After establishing \(C^2\)-regularity below, we define
\(\Omega_\varepsilon\) as the bounded connected component of
\(\R^3\setminus\Sigma_\varepsilon\). In particular,
\[
    \partial\Omega_\varepsilon=\Sigma_\varepsilon.
\]
In the interior of each interval \(I_j\), Proposition~\ref{prop:existence_explicit_epsilon}
gives \(f_j^\varepsilon\in C^2(I_j)\). Hence the associated surfaces of
revolution are \(C^2\) away from the junction circles. We now discuss the
regularity at the endpoints of the intervals \(I_j\), distinguishing between
the axis \(r=0\) and the junction radii \(a_j\), \(j=1,\dots,N-1\).

For \(j=1,N\), the interval \(I_j\) has one endpoint on the axis \(r=0\).
We show that \(\Sigma_j^\varepsilon\) is \(C^2\) there. Let
\(q=(q_1,q_2)\in\mathbb R^2\),  with \(r(q)\in I_j\). For
\(i,\ell\in\{1,2\}\) and \(r(q)>0\), the chain rule gives
\[
\partial_i\partial_\ell \big(f_j^\varepsilon(r(q))\big)
=
\frac{q_iq_\ell}{r(q)^2}
\left(
{f_j^\varepsilon}''(r(q))
-
\frac{{f_j^\varepsilon}'(r(q))}{r(q)}
\right)
+
\delta_{i\ell}\frac{{f_j^\varepsilon}'(r(q))}{r(q)}.
\]
Using the expansion of \({f_j^\varepsilon}'\) at \(0\), see
\eqref{eq:tayloreps0}, and since \(r(q)\to0\) as
\(r^{-1}(I_j)\ni q\to0\), we obtain
\[
\lim_{q\to0}
\partial_i\partial_\ell \big(f_j^\varepsilon(r(q))\big)
=
-\frac{\sigma_j\varphi(0)\varepsilon}{2}\,\delta_{i\ell}.
\]
Thus the second derivatives extend continuously at the axis, and
\(\Sigma_j^\varepsilon\) is \(C^2\) at \(r=0\).

It remains to prove regularity at the junction circles \(C_j^\varepsilon\), \(j=1,\dots,N-1\). Since \({f_j^\varepsilon}'\) and \({f_{j+1}^\varepsilon}'\) blow up at \(a_j\), see \eqref{eq:bc_int}, the boundary of \(\Omega_\varepsilon\) near \(C_j^\varepsilon\) must be described as the rotation of the graph of a function of the variable \(z\), and regularity has to be analyzed in this parametrization.

By \eqref{eq:defrho}, there exists \(\rho>0\) such that \(f^\eps_j\) and \(f^\eps_{j+1}\) are strictly monotone, hence invertible, on \(U_j=(a_j-\rho,a_j+\rho)\). Moreover, \eqref{eq:junc_deltasigma} implies that
\begin{equation}
    \overline{f_j^\varepsilon(I_j\cap U_j)}\cap \overline{f_{j+1}^\varepsilon(I_{j+1}\cap U_j)}=\{z_j^\varepsilon\},
\end{equation}
where $z_j^\varepsilon=f_j^\varepsilon(a_j)$.
Therefore, letting $H_j^\varepsilon
=
f_j^\varepsilon(I_j\cap U_j)
\cup
f_{j+1}^\varepsilon(I_{j+1}\cap U_j)$, the function $h_j^\varepsilon:\overline{H_j^\varepsilon}\to \R^+$ defined by 
\begin{equation}
    h_j^\varepsilon(z)=\begin{cases}
        (f_j^\varepsilon)^{-1}(z)&\text{ if }z\in \overline{f_j^\varepsilon(I_j\cap U_j)}\\
        (f_{j+1}^\varepsilon)^{-1}(z)&\text{ if }z\in \overline{f_{j+1}^\varepsilon(I_{j+1}\cap U_j)}
    \end{cases}
\end{equation}
is well defined and $C^2$-regular on $H_j^\varepsilon$.
It therefore remains to prove that \(h_j^\varepsilon\) is \(C^2\) at \(z_j^\varepsilon\). For this purpose, observe that \eqref{eq:bc_int} implies that ${h_j^\varepsilon}'(z_j^\varepsilon)=0$ ensuring $C^1$-regularity at $z_j^\varepsilon$. We compute the second derivative of $h_j^\varepsilon$ at $z\neq z_j^\varepsilon$. To this purpose, observe that a straightforward computation rewrites the prescribed $g_\varepsilon$-mean curvature equation~\eqref{eq:MCHeis-eps-lem} as 
\begin{equation}\label{eq:feps''}
    {f_j^\varepsilon}''(r)
    =-\frac{1}{\frac{r^2}{4}+\varepsilon^2}\left\{\sigma_j\varphi(r)\left({f_j^\varepsilon}'(r)^2+\frac{r^2}{4}+\varepsilon^2\right)^\frac{3}{2}+\frac{{f_j^\varepsilon}'(r)^3}{r}\left(1+\frac{\varepsilon^2}{{f_j^\varepsilon}'(r)^2}\right)\right\},
\end{equation}
so that, letting $r_z=h_j^\varepsilon(z)$ for $z\in f(I_j\cap U_j)$ we have 
\begin{equation}
\begin{split}
    {h_j^\varepsilon}''(z)&=
        -\frac{{f_j^\eps}''(r_z)}{{f_j^\eps}'(r_z)^3}\\
        &= \frac{1}{\frac{r_z^2}{4}+\varepsilon^2}
        \left\{
        \sigma_j\varphi(r_z)\,
        \operatorname{sgn}\!\left({f_j^\varepsilon}'(r_z)\right)
        \left(
        1+\frac{\frac{r_z^2}{4}+\varepsilon^2}
        {\left({f_j^\varepsilon}'(r_z)\right)^2}
        \right)^{\frac{3}{2}}
        +\frac{1}{r_z}
        \left(
        1+\frac{\varepsilon^2}
        {\left({f_j^\varepsilon}'(r_z)\right)^2}
        \right)
        \right\}.
\end{split}
\end{equation}

Since \(h_j^\varepsilon(z)\to a_j\) and
\(\lvert {f_j^\varepsilon}'(h_j^\varepsilon(z))\rvert\to+\infty\)
as \(z\to z_j^\varepsilon\), it follows that
\begin{equation}
\begin{split}
    \lim_{\substack{z\to z_j^\varepsilon\\
    z\in f_j^\varepsilon(I_j\cap U_j)}}
    {h_j^\varepsilon}''(z)
    &=
    \frac{1}{\frac{a_j^2}{4}+\varepsilon^2}
    \left(
        \sigma_j\delta_j(a_j)\varphi(a_j)
        +\frac{1}{a_j}
    \right),\\
    \lim_{\substack{z\to z_j^\varepsilon\\
    z\in f_{j+1}^\varepsilon(I_{j+1}\cap U_j)}}
    {h_j^\varepsilon}''(z)
    &=
    \frac{1}{\frac{a_j^2}{4}+\varepsilon^2}
    \left(
        \sigma_{j+1}\delta_{j+1}(a_j)\varphi(a_j)
        +\frac{1}{a_j}
    \right).
\end{split}
\end{equation}
By~\eqref{eq:junc_deltasigma} we therefore deduce that 
\begin{equation}
    \lim_{z\to {z_j^\varepsilon}^+}{h_j^\varepsilon}''(z)=\lim_{z\to {z_j^\varepsilon}^-}{h_j^\varepsilon}''(z),
\end{equation}
hence implying $C^2$-regularity of $h_j^\varepsilon$. This implies that $\partial\Omega_\varepsilon$ is also $C^2$-regular around $C_j$, since it can locally be described as the zero level set of the function $F(x,y,z) =x^2 + y^2 - (h_j^\varepsilon(z))^2$, which is $C^2$ with non-vanishing gradient around $C_j$ equals to $(2x, 2y, -2h_j^\varepsilon(z) {h_j^\varepsilon}'(z)) \neq 0$.

This proves that
\(\Sigma_\varepsilon=\partial\Omega_\varepsilon\) is an embedded
\(C^2\)-regular surface. Finally, the uniform convergence of
\(f_j^\varepsilon\) to \(f_j\) on \(\overline I_j\), for every
\(j=1,\ldots,N\), implies that
\[
    d_{\mathcal H}
    \bigl(\partial\Omega_\varepsilon,\partial\Omega\bigr)
    \longrightarrow0
    \qquad\text{as }\varepsilon\to0^+.
\]
This concludes the proof.
\end{proof}

\subsection{Proof of Theorem~\ref{thm:approx}}
\label{ss:analyticapprox}
We are now in a position to prove Theorem~\ref{thm:approx}. 
We include here the proof of a very well-known polynomial approximation result. We are going to employ it below in order to approximate $\gamma_1$ through polynomials, this way obtaining axysimmetric sets satisfying the assumptions of Proposition~\ref{thm:prescrmcanalit}.
In the following, given $k\in \N$, $f^{(k)}$ denotes the $k$-th order derivative of a $C^k$ function $f$.

\begin{lem}[Approximation with fixed boundary data]\label{lem:approxc2anal}
    Let $a<b$ and let $h\in C^2([a,b])$. 
    Then there exists a sequence of polynomials $\{P_n\}_{n\ge1}$ such that 
    \begin{equation}
    \label{eq:poly-punti}
    P_n^{(k)}(a)=h^{(k)}(a),\qquad P_n^{(k)}(b)=h^{(k)}(b)
    \quad\text{for } k=0,1,2,
    \end{equation}
    and 
    \begin{equation}
    \label{eq:limit-poly}
     \lim_{n \to +\infty} P_n = h \quad \text{in} \, \, C^2([a, b]).
    \end{equation}
    \end{lem}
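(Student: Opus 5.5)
The plan is to use Weierstrass approximation on the second derivative, followed by a correction by a fixed finite-dimensional family of polynomials that restores the six boundary conditions.

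First, by the Weierstrass theorem choose polynomials $Q_n$ with $\|Q_n-h''\|_{C^0([a,b])}\to0$. Define the polynomial
\[
R_n(x)=h(a)+h'(a)(x-a)+\int_a^x\!\!\int_a^s Q_n(t)\,dt\,ds .
\]
Then $R_n\to h$ in $C^2([a,b])$, since the second derivatives converge uniformly and the values and first derivatives at $a$ agree. By construction $R_n(a)=h(a)$ and $R_n'(a)=h'(a)$. In general, however, the remaining four conditions fail: $R_n''(a)$, $R_n(b)$, $R_n'(b)$ and $R_n''(b)$ need not match. Still, every boundary defect $e_n^{(k)}(c)=h^{(k)}(c)-R_n^{(k)}(c)$, for $c\in\{a,b\}$ and $k=0,1,2$, tends to $0$ by the $C^2$ convergence.

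Second, I fix once and for all the six Hermite basis polynomials $L_{c,k}$ of degree at most $5$. They are characterized by $L_{c,k}^{(m)}(c')=\delta_{cc'}\delta_{km}$ for $c,c'\in\{a,b\}$ and $k,m\in\{0,1,2\}$. They exist and are unique because Hermite interpolation with data up to order $2$ at two distinct nodes is uniquely solvable in polynomials of degree at most $5$: the $6\times6$ linear system is nonsingular, since a degree-$5$ polynomial with triple zeros at $a$ and $b$ vanishes identically. Now set
\[
P_n=R_n+\sum_{c\in\{a,b\}}\sum_{k=0}^2 e_n^{(k)}(c)\,L_{c,k}.
\]
This is a polynomial and satisfies \eqref{eq:poly-punti} exactly. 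Moreover,
\[
\|P_n-h\|_{C^2}\le\|R_n-h\|_{C^2}+\sum|e_n^{(k)}(c)|\,\|L_{c,k}\|_{C^2},
\]
and the right-hand side tends to $0$ because the $L_{c,k}$ are fixed. This gives \eqref{eq:limit-poly}.

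There is no real obstacle here. The only point needing care is the invertibility of the Hermite interpolation system, which is handled by the zero-counting argument above.
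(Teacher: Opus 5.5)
Your proof is correct and follows the same scheme as the paper: a polynomial approximation of $h$ in $C^2([a,b])$, corrected by a fixed Hermite basis of degree-$5$ polynomials weighted by the vanishing boundary defects. The differences are cosmetic. You build the $C^2$ approximant by integrating a Weierstrass approximant of $h''$ twice, where the paper cites a $C^k$ version of Stone--Weierstrass. You also obtain the Hermite basis by the zero-counting argument, where the paper writes it out explicitly via $\eta_j$ and $\tilde\eta_j(t)=(-1)^j\eta_j(1-t)$.
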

    
    \begin{proof}
    By a generalization of \emph{Stone-Weierstrass Theorem} (see, for instance, \cite[Theorem~1.6.2]{Narasimhan1973}), if $h\in C^2([a,b])$ then there exist polynomials $Q_n$ such that 
    \[
    Q_n\to h \text{ in the  }C^2([a,b]).
    \]
However, these polynomials $Q_n$ do not necessarily match the boundary data of $h$. We define the boundary deviations for derivatives of order $k=0,1,2$ as:
    \[
    \delta_{k,n}^a = h^{(k)}(a) - Q_n^{(k)}(a), \qquad 
    \delta_{k,n}^b = h^{(k)}(b) - Q_n^{(k)}(b).
    \]
    Since $Q_n \to h$ in $C^2$, it follows that $\delta_{k,n}^a \to 0$ and $\delta_{k,n}^b \to 0$ as $n \to \infty$.
    
    We now construct a correction polynomial $R_n(x)$ to be added to $Q_n$ in order to match \eqref{eq:poly-punti}. 
    
    To this end, explicitly carrying out the standard construction of Hermite interpolation as presented in \cite[Theorem~2.1.5.2]{StoerBulirsch2002} and in the sequel, we introduce polynomials in $[0,1]$, denoted by $\eta_0, \eta_1, \eta_2$ such that
    \begin{equation}
    \label{eq:condizionieta}
    \eta_j^{(k)} = \delta_{jk} \quad (\text{Kronecker delta}), \qquad \text{for } j,k \in \{0,1,2\}
    \end{equation}
    at $t = 0$
    while vanishing along with their first two derivatives at $t=1$. One can check that
    \begin{align}
    \eta_0(t) &= 1-10t^3+15t^4-6t^5, \\
    \eta_1(t) &= t-6t^3+8t^4-3t^5, \\
    \eta_2(t) &= \tfrac12 t^2-\tfrac32 t^3+\tfrac32 t^4-\tfrac12 t^5
    \end{align}
    satisfy the requirements.
    Consider also the polynomials 
    \[
    \tilde{\eta}_j(t) = (-1)^j \eta_j(1-t)
    \]
    that by symmetry
    satisfy the conditions \eqref{eq:condizionieta} at $t = 1$,
    while vanishing along with their first two derivatives at $t=0$.
    
    We can thus define, for $L=b-a$,
    \[
    R_n(x) = \sum_{j=0}^2 \left( \delta_{j,n}^a \, L^j \, \eta_j\left(\tfrac{x-a}{L}\right) + \delta_{j,n}^b \, L^j \, \tilde{\eta}_j\left(\tfrac{x-a}{L}\right) \right).
    \]
  Observe that
    $
    R_n^{(k)}(a) = \delta_{k,n}^a,\quad$ and $R_n^{(k)}(b) = \delta_{k,n}^b.
    $
    Finally, we define $P_n = Q_n + R_n$, that thus satisfies \eqref{eq:poly-punti}.
     Since $\delta_{k,n}^{a,b} \to 0$, we have $R_n \to 0$ in $C^2([a, b])$ as $n \to +\infty$, implying \eqref{eq:limit-poly} and concluding the proof.
\end{proof}

We finally conclude the proof of Theorem \ref{thm:approx}, that from this point becomes fairly standard.
\begin{proof}[Proof of Theorem~\ref{thm:approx}]
Let
\(\gamma\in C^2([0,1],\R^2_+)\) be a generating profile for
\(\partial\Omega\). By regularity at the characteristic poles, there exist
\(0<a<b<1\) such that $\gamma_1'\neq0$ on $[0,a]\cup[b,1]$.

Let \(P_n\) be the polynomial approximations of \(\gamma_1\) given by
Lemma~\ref{lem:approxc2anal}, and set
\begin{equation}
    \gamma_n(t)
    =
    \begin{cases}
        \gamma(t),
        &t\in[0,a]\cup[b,1],\\
        (P_n(t),\gamma_2(t)),
        &t\in[a,b].
    \end{cases}
\end{equation}
For \(n\) sufficiently large, \(\gamma_n\) generates an axisymmetric
set \(\Omega^n\) satisfying the assumptions of
Proposition~\ref{thm:prescrmcanalit}. Moreover, after passing to a
subsequence,
\[
    d_{\mathcal H}(\partial\Omega^n,\partial\Omega)<\frac1n,
    \qquad
    c_1-\frac1n\leq H^n\leq c_2+\frac1n.
\]
where $H^n$ is the horizontal mean curvature of $\partial\Omega^n$.
For every \(n\), Proposition~\ref{thm:prescrmcanalit} provides sets
\(\Omega_\varepsilon^n\) such that
\[
    d_{\mathcal H}
    (\partial\Omega_\varepsilon^n,\partial\Omega^n)
    \longrightarrow0
    \qquad\text{as }\varepsilon\to0^+,
\]
and whose \(g_\varepsilon\)-mean curvature is \(H^n\).
Choose a decreasing sequence \(\varepsilon_n\to0\) such that
\[
    d_{\mathcal H}
    (\partial\Omega_\varepsilon^n,\partial\Omega^n)<\frac1n
    \qquad\text{for }0<\varepsilon\leq\varepsilon_n,
\]
and define \(\Omega_\varepsilon=\Omega_\varepsilon^n\) if $\varepsilon\in(\varepsilon_{n+1},\varepsilon_n]$.
It follows that
\[
    d_{\mathcal H}(\partial\Omega_\varepsilon,\partial\Omega)
    <\frac2n,
    \qquad
    c_1-\frac1n\leq H_\varepsilon\leq c_2+\frac1n
\]
whenever \(\varepsilon\in(\varepsilon_{n+1},\varepsilon_n]\). This
proves the statement.
\end{proof}

\begin{rem}\label{rem:pansu_tomter}
    If $\Omega=\mathcal P_R$ is the Pansu sphere of radius $R>0$ defined in~\eqref{eq:pansub}, then the family $\{\Omega_\varepsilon\}_{\varepsilon>0}$ obtained via the construction described above recovers the family of Tomter spheres~\cite{Tomter93,FranceschiMontefalconeMonti19}.
    For the surface $\partial\mathcal P_R$, the auxiliary function $\psi$ is given by
    \[
        \psi(r) = \int_0^r s\,\varphi(s)\,ds = \int_0^r s \frac{2}{R} \, ds = \frac{r^2}{R}.
    \]
    Applying our construction with the prescribed curvature $\varphi = {2}/{R}$, we derive the profile 
    \[
        f_\varepsilon(r) = \int_r^R \frac{s}{2}\, \sqrt{\frac{s^2 + 4\varepsilon^2}{R^2 - s^2}}\,ds, \qquad r \in [0,R].
    \]
    corresponding to the generating profile of the Tomter spheres of radius $R$ in $\mathbb{H}_\varepsilon$ (see \cite{Tomter93}).
\end{rem}

\section{The main existence results}\label{s:existence}
The proof of our main existence theorem boils down to obtaining gradient bounds for
$u_\varepsilon^\delta$, solutions to \eqref{RepsMCFproblreg}, that are stable in both $\delta$ and $\eps$. This will allow for a standard compactness argument that will easily lead to viscosity solutions to \eqref{eq:HMCFeq}.

\subsection{Right invariant maximum principle}\label{ss:maximumpr}
The first step we carry out  consists in a maximum principle for $|\tilde\grad_\eps u_\eps^\delta|$. In fact, $|\tilde\nabla_\eps u_\eps^\delta|^2$ turns out to be subsolution to an elliptic equation, due to $\tilde\nabla$ and $\nabla$ commuting, see formula~\eqref{eq:vfcomm}. We do not expect a similar phenomenon to hold for $|\nabla_\eps u_\eps^\delta|^2$. The main ingredient is a right invariant Bochner identity due to Garofalo \cite[Proposition~3.4]{Garofalo23}, that we extend here to include Riemannian Heisenberg metrics. 

\begin{prop}[Right Bochner-type identity]\label{prop:bochnermia}
    Let $\eps>0$, then for any function $f \in C^3(\R^3)$, the following identity holds:
    \begin{equation}\label{eq:bochnermia}
        \Delta_\varepsilon (|\tilde{\nabla}_\varepsilon f|^2) = 2\left[ |\tilde{\nabla}_\varepsilon (Xf)|^2 + |\tilde{\nabla}_\varepsilon (Yf)|^2 + \varepsilon^2|\tilde{\nabla}_\varepsilon (Zf)|^2 \right] + 2\langle \tilde{\nabla}_\varepsilon f, \tilde{\nabla}_\varepsilon (\Delta_\varepsilon f) \rangle.
    \end{equation}
\end{prop}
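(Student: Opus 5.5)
The plan is a direct computation. I will first reduce the Laplace--Beltrami operator $\Delta_\varepsilon$ to a sum of squares of left-invariant fields, and then exploit the fact that left- and right-invariant fields commute, see \eqref{eq:vfcomm}.

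\emph{Step 1: form of $\Delta_\varepsilon$.} Since $\{X,Y,\varepsilon Z\}$ is a $g_\varepsilon$-orthonormal frame, the Riemannian volume of $g_\varepsilon$ is $\varepsilon^{-1}\,dx\,dy\,dz$, a constant multiple of Lebesgue measure. Each of $X$, $Y$, $Z$ has zero Euclidean divergence; for instance $\operatorname{div}X=\partial_z(-y/2)=0$. Therefore, for every $C^2$ function $g$,
\[
\Delta_\varepsilon g=\operatorname{div}(\nabla_\varepsilon g)=X^2g+Y^2g+\varepsilon^2Z^2g .
\]
In the same way, $|\tilde\nabla_\varepsilon f|^2=(\tilde Xf)^2+(\tilde Yf)^2+\varepsilon^2(Zf)^2$. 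I will read $\langle\cdot,\cdot\rangle$ on the right-hand side of \eqref{eq:bochnermia} as the corresponding right-invariant pairing $\langle\tilde\nabla_\varepsilon f,\tilde\nabla_\varepsilon g\rangle=\tilde Xf\,\tilde Xg+\tilde Yf\,\tilde Yg+\varepsilon^2 Zf\,Zg$.

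\emph{Step 2: commutation.} Set $\mathcal L=\{X,Y,\varepsilon Z\}$ and $\mathcal R=\{\tilde X,\tilde Y,\varepsilon Z\}$. Every $V\in\mathcal L$ commutes with every $W\in\mathcal R$. For the horizontal fields this is \eqref{eq:vfcomm}, and $Z$ is central in both Lie algebras. In particular $W\Delta_\varepsilon g=\Delta_\varepsilon Wg$ for $g\in C^3$ and $W\in\mathcal R$.

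\emph{Step 3: the computation.} For $W\in\mathcal R$, the Leibniz rule gives
\[
\Delta_\varepsilon\big((Wf)^2\big)=2\sum_{V\in\mathcal L}(VWf)^2+2\,Wf\,\Delta_\varepsilon(Wf)=2\sum_{V\in\mathcal L}(VWf)^2+2\,Wf\,W(\Delta_\varepsilon f).
\]
Summing over $W\in\mathcal R$, the second terms add up to $2\langle\tilde\nabla_\varepsilon f,\tilde\nabla_\varepsilon(\Delta_\varepsilon f)\rangle$. For the first terms, commutation gives $VWf=WVf$, so
\[
\sum_{W\in\mathcal R}\sum_{V\in\mathcal L}(VWf)^2=\sum_{V\in\mathcal L}\sum_{W\in\mathcal R}(WVf)^2=\sum_{V\in\mathcal L}|\tilde\nabla_\varepsilon(Vf)|^2 .
\]
Since $\mathcal L=\{X,Y,\varepsilon Z\}$, this equals $|\tilde\nabla_\varepsilon(Xf)|^2+|\tilde\nabla_\varepsilon(Yf)|^2+\varepsilon^2|\tilde\nabla_\varepsilon(Zf)|^2$, which is exactly \eqref{eq:bochnermia}.

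$C^3$ regularity is exactly what the third-order derivatives require, and all the steps above are routine. The only point needing care is Step 1: one must check that $\Delta_\varepsilon$ carries no first-order terms, which follows because the frame fields are divergence-free and the volume density is constant. This is also where the argument would fail for $|\nabla_\varepsilon f|^2$, since $[X,Y]=Z\neq0$ and $X,Y$ do not commute with $\Delta_\varepsilon$.
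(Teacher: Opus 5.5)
Your proof is correct and follows the same route as the paper: expand $\Delta_\eps=X^2+Y^2+\eps^2Z^2$ applied to $|\tilde\nabla_\eps f|^2$ by the Leibniz rule, then use that $X,Y,Z$ commute with $\tilde X,\tilde Y,Z$ (so $\Delta_\eps$ commutes with $\tilde\nabla_\eps$) to rewrite the terms. Your explicit check that $\Delta_\eps$ has no first-order terms (divergence-free frame, constant volume density) is a detail the paper takes for granted.
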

\begin{proof}
    By a direct computation
    \begin{align}
        \Delta_\eps (|\Tilde{\nabla}_\eps f|^2)&=XX(|\Tilde{\nabla}_\eps f|^2)+YY(|\Tilde{\nabla}_\eps f|^2)+\eps^2 ZZ(|\Tilde{\nabla}_\eps f|^2)=\\ 
        &=XX(\Tilde{\nabla}_\eps f\cdot\Tilde{\nabla}_\eps f)+YY(\Tilde{\nabla}_\eps f\cdot\Tilde{\nabla}_\eps f)+\eps^2 ZZ(\Tilde{\nabla}_\eps f\cdot\Tilde{\nabla}_\eps f)=\\
        &=X(2\Tilde{\nabla}_\eps f\cdot X(\Tilde{\nabla}_\eps f))+Y(2\Tilde{\nabla}_\eps f\cdot Y(\Tilde{\nabla}_\eps f))+\eps^2 Z(2\Tilde{\nabla}_\eps f\cdot Z(\Tilde{\nabla}_\eps f))=\\
        &=2\Big[|X(\Tilde{\nabla}_\eps f)|^2+ \Tilde{\nabla}_\eps f\cdot XX(\Tilde{\nabla}_\eps f)+|Y(\Tilde{\nabla}_\eps f)|^2+ \Tilde{\nabla}_\eps f\cdot YY(\Tilde{\nabla}_\eps f) +\\
        & \qquad + \eps^2 |Z(\Tilde{\nabla}_\eps f)|^2+ \eps^2 \Tilde{\nabla}_\eps f\cdot ZZ(\Tilde{\nabla}_\eps f) \Big].
    \end{align}
    As observed in \eqref{eq:vfcomm}, the left-invariant vector fields $\{X, Y, Z\}$ commute with the right-invariant gradient $\tilde{\nabla}_\varepsilon$. Consequently:
    \[
        X(\tilde{\nabla}_\varepsilon f) = \tilde{\nabla}_\varepsilon (Xf), \quad Y(\tilde{\nabla}_\varepsilon f) = \tilde{\nabla}_\varepsilon (Yf), \quad Z(\tilde{\nabla}_\varepsilon f) = \tilde{\nabla}_\varepsilon (Zf).
    \]
    It follows that $\Delta_\varepsilon$ commutes with $\tilde{\nabla}_\varepsilon$ as well. Substituting these commutation relations into the expression above yields:
    \begin{align}
        \Delta_\varepsilon (|\tilde{\nabla}_\varepsilon f|^2) &= 2\Big[ |\tilde{\nabla}_\varepsilon (Xf)|^2 + |\tilde{\nabla}_\varepsilon (Yf)|^2 + \varepsilon^2 |\tilde{\nabla}_\varepsilon (Zf)|^2 \Big] + 2\langle \tilde{\nabla}_\varepsilon f, \tilde{\nabla}_\varepsilon (\Delta_\varepsilon f) \rangle.
    \end{align}
\end{proof}

\begin{prop}
\label{thm:gradest_ed}
    Let $D\subset\R^3$ be a bounded, smooth open set. Let $u_\eps^\delta \in C^\infty(\overline{D})$ be a  solution to \eqref{RepsMCFproblreg} in $D$, then
    \begin{equation}
    \label{eq:gradest_edtilde}
    \|\tilde \nabla_\eps u_\eps^\delta\|_{L^\infty(D)}\le\|\tilde \nabla_\eps u_\eps^\delta\|_{L^\infty(\partial D)}.
    \end{equation}
\end{prop}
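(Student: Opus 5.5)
We will show that $w=|\tilde\nabla_\eps u|^2$, with $u=u_\eps^\delta$, is a subsolution of a linear elliptic equation with no zeroth-order term, and then apply the weak maximum principle. Write the regularized equation in non-divergence form as
\[
L u := a^{ij}(\nabla_\eps u)\, W_iW_j u = -1,
\qquad
a^{ij}(q)=\delta_{ij}-\frac{q_iq_j}{|q|^2+\delta^2}.
\]
Here $W_1=X$, $W_2=Y$, $W_3=\eps Z$ denote the $g_\eps$-orthonormal frame, and $\hesseps$ is understood as the symmetrized second derivatives $W_iW_j$. This is legitimate because the quadratic form $\hesseps u(\nabla_\eps u,\nabla_\eps u)$ only sees the symmetric part. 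The matrix $a^{ij}$ is uniformly elliptic on $\overline D$ for fixed $\delta>0$, with eigenvalues in $[\delta^2/(|q|^2+\delta^2),1]$, and $u$ is smooth up to the boundary.

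\textbf{Step 1: commuting the right-invariant fields through the equation.} Let $V\in\{\tilde X,\tilde Y,\eps Z\}$. By \eqref{eq:vfcomm}, and since $Z$ is central, $V$ commutes with $X$, $Y$ and $Z$, hence with every $W_iW_j$. Apply $V$ to $a^{ij}(\nabla_\eps u)W_iW_ju=-1$ and set $v=Vu$. This gives
\[
a^{ij}W_iW_j v + \partial_{q_k}a^{ij}(\nabla_\eps u)\,(W_iW_ju)\,W_k v=0 .
\]
So $v$ solves a linear equation $\mathcal L v:=a^{ij}W_iW_jv+b^kW_kv=0$ with bounded smooth coefficients $b^k=\partial_{q_k}a^{ij}(\nabla_\eps u)\,W_iW_ju$. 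Crucially, the operator $\mathcal L$ is the same for all three choices of $V$.

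\textbf{Step 2: subsolution property.} We have $w=\sum_V (Vu)^2$, since $|\tilde\nabla_\eps u|^2=(\tilde Xu)^2+(\tilde Yu)^2+\eps^2(Zu)^2$. Therefore
\[
\mathcal L w=\sum_V\Big(2 Vu\,\mathcal L(Vu)+2a^{ij}W_i(Vu)W_j(Vu)\Big)=2\sum_V a^{ij}W_i(Vu)W_j(Vu)\ge0 .
\]
This is the analogue of Proposition~\ref{prop:bochnermia}, with $\Delta_\eps$ replaced by the linearized operator. The commutation is exactly what kills any zeroth-order or curvature-type term. For a left-invariant gradient, the commutators $[X,Y]=Z$ would produce such terms, which is why the estimate is stated for $\tilde\nabla_\eps$.

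\textbf{Step 3: maximum principle.} Rewrite $\mathcal L$ in Euclidean coordinates. The second-order part is $A^{\alpha\beta}\partial_\alpha\partial_\beta$ with $A=M^{T}aM$, where $M$ is the invertible smooth matrix expressing $W_i$ in terms of $\partial_\alpha$. This $A$ is uniformly positive definite on $\overline D$. The first-order coefficients are bounded and there is no zeroth-order term. The classical weak maximum principle (e.g.\ Hopf's) applied to $\mathcal L w\ge0$ then gives $\max_{\overline D}w=\max_{\partial D}w$, which is \eqref{eq:gradest_edtilde}.

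The main obstacle is Step 1. One must check carefully that differentiating the quasilinear term $\hesseps u(\nabla_\eps u,\nabla_\eps u)/(|\nabla_\eps u|^2+\delta^2)$ along $V$ yields only first-order terms in $Vu$ with coefficients independent of $V$. The risk is a hidden dependence on the non-symmetric part of $W_iW_j$; this is handled by symmetrizing $W_iW_j$ before differentiating. The ellipticity used in Step 3 holds only because $\delta>0$, which is harmless since the bound obtained is independent of $\delta$.
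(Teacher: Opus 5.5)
Your proof is correct and is essentially the paper's argument. The paper applies its right-invariant Bochner formula (Proposition~\ref{prop:bochnermia}) for $\Delta_\eps$, substitutes the equation, and absorbs the negative quasilinear term into the positive term $2\sum_i|\tilde\nabla_\eps(W_iu)|^2$ by Cauchy--Schwarz; this is exactly the nonnegativity $2\sum_V a^{ij}W_i(Vu)W_j(Vu)\ge 0$ that you read off more directly by differentiating the equation along $\tilde X,\tilde Y,\eps Z$, and both arguments finish with the same weak maximum principle.

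One precision about your justification: the identity $\hesseps u(\nabla_\eps u,\nabla_\eps u)=\sum_{i,j}W_iu\,W_ju\,W_iW_ju$ does not hold merely because the quadratic form sees only the symmetric part. It holds because the first-order Christoffel terms of $g_\eps$ (the $\pm\frac{1}{2\eps}$ entries in \eqref{eq:hessianeps}) cancel when both slots are filled by $\nabla_\eps u$.
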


\begin{proof}
Applying \eqref{eq:bochnermia} to $u_\eps^\delta$ we get, plugging in \eqref{RepsMCFproblreg},
\begin{align}\label{eq:epsdeltauno}
        \Delta_\eps \varphi &= 2\mathcal{A} - \frac{2\mathcal{B}}{|\nabla_\eps u_\eps^\delta|^2+\delta^2}\\
        &\quad - 2\frac{\nabla_\eps\nabla_\eps u_\eps^\delta(\nabla_\eps u_\eps^\delta,\nabla_\eps u_\eps^\delta)}{(|\nabla_\eps u_\eps^\delta|^2+\delta^2)^2}\langle\nabla_\eps u_\eps^\delta,\nabla_\eps\varphi\rangle 
        + \frac{\nabla_\eps\nabla_\eps \varphi(\nabla_\eps u_\eps^\delta,\nabla_\eps u_\eps^\delta)}{|\nabla_\eps u_\eps^\delta|^2+\delta^2}+ \frac{2\nabla_\eps\nabla_\eps u_\eps^\delta(\nabla_\eps u_\eps^\delta,\nabla_\eps \varphi)}{|\nabla_\eps u_\eps^\delta|^2+\delta^2},
    \end{align}
   where we let $\varphi = |\Tilde{\nabla}_\eps u_\eps^\delta|^2$,
   \[
   \mathcal A = |\Tilde{\nabla}_\eps (X u_\eps^\delta)|^2+|\Tilde{\nabla}_\eps (Y u_\eps^\delta)|^2+\eps^2|\Tilde{\nabla}_\eps (Z u_\eps^\delta)|^2
   \]
 and
   \[
        \mathcal{B} = \langle \nabla_\eps u_\eps^\delta, \Tilde{\nabla}_\eps (X u_\eps^\delta) \rangle^2 + \langle \nabla_\eps u_\eps^\delta, \Tilde{\nabla}_\eps (Y u_\eps^\delta) \rangle^2 + \eps^2 \langle \nabla_\eps u_\eps^\delta, \Tilde{\nabla}_\eps (Z u_\eps^\delta) \rangle^2.
    \]
   We now verify that the positive term $2\mathcal{A}$ dominates the negative term involving $\mathcal{B}$.
    Applying the Cauchy-Schwarz inequality $\langle v, w \rangle^2 \le |v|^2 |w|^2$ to each summand in $\mathcal{B}$, we have:
    \[
        \langle \nabla_\eps u_\eps^\delta, \Tilde{\nabla}_\eps (V u_\eps^\delta) \rangle^2 \le |\nabla_\eps u_\eps^\delta|^2 \, |\Tilde{\nabla}_\eps (V u_\eps^\delta)|^2,
    \]
    for any vector field $V \in \{X, Y, \eps Z\}$. Summing these contributions yields:
    \[
        \mathcal{B} \le |\nabla_\eps u_\eps^\delta|^2 \, \mathcal{A}.
    \]

    Consequently we obtain the estimate:
\begin{equation}\label{eq:epsdelta2}
        \frac{2\mathcal{B}}{|\nabla_\eps u_\eps^\delta|^2+\delta^2} \le \frac{2|\nabla_\eps u_\eps^\delta|^2}{|\nabla_\eps u_\eps^\delta|^2+\delta^2} \mathcal{A} \le 2\mathcal{A},
     \end{equation}
   that, coupled with \eqref{eq:epsdeltauno} and \eqref{eq:epsdelta2}, results in
        \begin{align}
        \begin{split}
            \mathcal{L}\varphi &= \Delta_\eps \varphi - \frac{\nabla_\eps\nabla_\eps \varphi(\nabla_\eps u_\eps^\delta,\nabla_\eps u_\eps^\delta)}{|\nabla_\eps u_\eps^\delta|^2+\delta^2}
            + 2\frac{\nabla_\eps \nabla_\eps u_\eps^\delta (\nabla_\eps u_\eps^\delta ,\nabla_\eps u_\eps^\delta)}{(|\nabla_\eps u_\eps^\delta |^2+\delta^2)^2}\langle\nabla_\eps u_\eps^\delta,\nabla_\eps\varphi\rangle
             \\
            &\quad\quad\quad\quad\quad\quad\quad\quad - \frac{2\nabla_\eps\nabla_\eps u_\eps^\delta(\nabla_\eps u_\eps^\delta,\nabla_\eps \varphi)}{|\nabla_\eps u_\eps^\delta|^2+\delta^2} \geq 0.
        \end{split}
        \end{align}
        The operator $\mathcal L$ being linear and elliptic yields \eqref{eq:gradest_edtilde} by the maximum principle. 
    \end{proof}
    With the above result in mind, the following subsections will be devoted to finding uniform upper bounds for $\tilde{\nabla} u_\eps^\delta$ on the boundary.

    \subsection{Letting $\delta\to 0$: the Riemannian Heisenberg mean curvature flow.}
   In the following statement, we gather various properties of solutions $u_\eps^\delta$ to \eqref{RepsMCFproblreg} and of its limit $u_\eps$. They are well known,   yet we include an argument for the smooth convergence in the regular set, that is item (v). In fact, we were not able to find an explicit such statement for the arrival time formulation we are interested in. 
The function $u_\eps$ will turn out to be a viscosity solution to \eqref{RepsMCFprobl}. This has the very same meaning as in the horizontal case described in Definition \ref{dfn:viscsol}, where the Hessian, the Laplacian and the gradients are those induced by $g_\eps$ and $\eta$ is allowed to be any vector in $T_p\R^3$ with $|\eta| \leq 1$.
\begin{thm}
\label{thm:mcfriem}
Let $D \subset \R^3$ be a bounded open set with smooth boundary. Fix $\eps > 0$. Assume that $H_\eps (p) > 0$ for any $p \in \partial D$. Then, the following hold.
\begin{itemize}
\item[(i)]for any $\delta > 0$ there exists a unique smooth solution $u_\eps^\delta \in C^\infty(\overline{D})$ to \eqref{RepsMCFproblreg}. 
\item [(ii)] There exists a function $u_\eps \in C^0(\overline{D})$ such $u_\eps^\delta \to u_\eps$ in $C^0(K)$ as $\delta \to 0$ for any compact $K \subset D$.  
\item [(iii)]$u_\eps$ is the unique viscosity solution to \eqref{RepsMCFprobl}. 
\item [(iv)] $|\nabla _\eps^\delta u_\eps^\delta| \to |\nabla_\eps u_\eps|$ locally uniformly in $K \subset \mathrm{Reg}(u_\eps)$, where
\begin{equation}
\label{eq:regularset}
\mathrm{Reg}(u_\eps) = \{p \in D  \, | u_\eps \, \, \text{is smooth in $p$ and} \,|\nabla_\eps u_\eps|(p) \neq 0\}
\end{equation}
\item[(v)] There exists a tubular neighborhood $U_r = \{p \in D \, | d_\eps (p, \partial D) < r\}$ such that $u_\varepsilon \in C^\infty(\overline{U_r})$. In particular, $U_r \subset\mathrm{Reg} (u_\eps)$.
\end{itemize}
\end{thm}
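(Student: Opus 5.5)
Items (i)--(iii) are the Riemannian version of the mean-convex arrival-time theory, and I would obtain them by transplanting the Evans--Spruck/Schulze elliptic regularization argument to $\bH_\eps$, which is a smooth complete Riemannian manifold. For fixed $\delta>0$, the operator in \eqref{RepsMCFproblreg} is uniformly elliptic, quasilinear and smooth, so existence of a smooth solution follows from the Leray--Schauder continuity method once a priori $C^1$ bounds independent of the continuity parameter are available. Uniqueness follows from the comparison principle for this regularized operator. The bounds come in three pieces.
\begin{itemize}
\item[(a)] A sup bound, by comparison with a large concave quadratic barrier, e.g.\ $C(R^2-|x|^2)$ in suitable coordinates.
\item[(b)] A boundary gradient bound. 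Since $H_\eps>0$ on $\partial D$, the function $\psi=\lambda\, d_\eps(\cdot,\partial D)-\mu\, d_\eps(\cdot,\partial D)^2$ is a supersolution in a thin collar $U_r$, and $0$ is a subsolution. The supersolution property holds because the $\delta$-regularized operator applied to $d_\eps$ is comparable to $H_\eps$ on level sets near $\partial D$. This bounds $|\nabla_\eps u^\delta_\eps|$ on $\partial D$ uniformly in $\delta$.
\item[(c)] An interior gradient bound, which reduces to the boundary one. I would take the Riemannian Bochner formula with Ricci lower bound for $g_\eps$, in the spirit of Proposition~\ref{thm:gradest_ed}; this gives a maximum principle for $|\nabla_\eps u|^2$ up to a lower-order term controlled by $\mathrm{Ric}_{g_\eps}\ge -K_\eps$. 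Alternatively I would use Proposition~\ref{thm:gradest_ed} directly, since $\tilde\nabla_\eps$ and $\nabla_\eps$ are comparable on bounded sets.
\end{itemize}
Higher regularity then follows from Schauder theory.

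For (ii) and (iii), the bounds above are uniform in $\delta$, so $\{u^\delta_\eps\}$ is equi-Lipschitz. Arzel\`a--Ascoli gives a subsequence converging in $C^0(\overline D)$, and the standard stability of viscosity solutions under $\delta\to0$ shows the limit is a viscosity solution of \eqref{RepsMCFprobl}. Here one must handle the case $\nabla_\eps\Phi=0$ through the $\eta$-relaxation, with $\eta$ a limit of $\nabla_\eps\Phi/\sqrt{|\nabla_\eps\Phi|^2+\delta^2}$. The boundary condition holds continuously by the barrier. Uniqueness of viscosity solutions of the degenerate arrival-time equation follows from the comparison principle of Evans--Spruck/Chen--Giga--Goto type, applied in the manifold version, which works because $D$ is bounded and the operator is geometric. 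Uniqueness of the limit then upgrades subsequential convergence to convergence of the whole family.

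For (v), I would use the barrier from (b) and the reverse barrier $u\ge c\,d_\eps$, obtained by comparison with the subsolution $c\,d_\eps-\mu d_\eps^2$, to get $|\nabla_\eps u^\delta_\eps|\ge c_0>0$ on a collar $U_r$ uniformly in $\delta$. This is the main obstacle. My first attempt would be a lower bound on the boundary normal derivative from the subsolution barrier, propagated inward via the maximum principle for $u^\delta_\eps/d_\eps$-type quantities. Following Schulze, a cleaner route is to note that $u^\delta_\eps$ is a translating graph, and that mean convexity is preserved, giving $H_\eps|\nabla_\eps u|\ge1$-type identities on level sets. On $U_r$ the equation is then uniformly elliptic, with ellipticity independent of $\delta$. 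Krylov--Safonov and Schauder estimates give $C^{k}$ bounds uniform in $\delta$, so $u^\delta_\eps\to u_\eps$ in $C^\infty(\overline{U_r})$ and $U_r\subset\mathrm{Reg}(u_\eps)$.

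Finally, (iv) follows by the same local argument at any $p\in\mathrm{Reg}(u_\eps)$. Since $|\nabla_\eps u_\eps|\ge c>0$ near $p$ and $u^\delta_\eps\to u_\eps$ uniformly, I need nondegeneracy of $\nabla_\eps u^\delta_\eps$ near $p$ for small $\delta$. I would get it from local $C^{1,\alpha}$ convergence: on a small ball around $p$, interpret $u_\eps$ as a smooth solution of the uniformly elliptic equation and compare it with $u_\eps^\delta$, whose difference solves a linear elliptic equation with small coefficient perturbation. This yields $C^1$ convergence on compacts of $\mathrm{Reg}(u_\eps)$.
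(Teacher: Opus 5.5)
Your treatment of (i)--(iii) is fine and is essentially the Evans--Spruck/Schulze elliptic regularization that the paper cites. The shortcut in (c) through Proposition~\ref{thm:gradest_ed} is also legitimate: $X=\tilde X-yZ$ and $Y=\tilde Y+xZ$ make $|\nabla_\eps u|$ and $|\tilde\nabla_\eps u|$ comparable on $\overline D$, with constants depending only on $\eps$ and $D$.

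The genuine gap is in (iv) and (v), at the step ``on $U_r$ the equation is then uniformly elliptic, with ellipticity independent of $\delta$''. This is false. Write \eqref{RepsMCFproblreg} as $-\operatorname{tr}\bigl(a_\delta(\nabla_\eps u)\,\nabla_\eps\nabla_\eps u\bigr)=1$ with $a_\delta(\xi)=\mathrm{Id}-\frac{\xi\otimes\xi}{|\xi|^2+\delta^2}$. The eigenvalues of $a_\delta(\xi)$ are $1$ on $\xi^\perp$ and $\frac{\delta^2}{|\xi|^2+\delta^2}$ along $\xi$. A lower bound $|\nabla_\eps u^\delta_\eps|\ge c_0$ therefore pushes the smallest eigenvalue below $\delta^2/c_0^2\to0$, so the equation degenerates exactly where you want it nondegenerate. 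This is structural. The limit operator $\operatorname{tr}\bigl((\mathrm{Id}-\nu\otimes\nu)\nabla_\eps\nabla_\eps u\bigr)$ has no second derivative in the normal direction, even where $\nabla_\eps u\neq0$, because regularity of an arrival time across its level sets is time regularity of the flow.

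Hence Krylov--Safonov and Schauder give no $\delta$-uniform bounds on $U_r$. The same objection defeats your argument for (iv). Near $p\in\mathrm{Reg}(u_\eps)$, $u_\eps$ does not solve a uniformly elliptic equation, and the linear equation for $u^\delta_\eps-u_\eps$ degenerates as $\delta\to0$; it gives $C^0$ but not $C^1$ convergence.

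The lower gradient bound itself is not obtained either. Preserved mean convexity gives $H>0$, and quantitatively it controls $|\nabla_\eps u^\delta_\eps|$ only from \emph{above}. Since $\sqrt{|\nabla_\eps u^\delta_\eps|^2+\delta^2}=1/H^\delta_\eps$ on the lifted level sets, a lower gradient bound is an \emph{upper} curvature bound, i.e.\ a genuine curvature estimate for the flow.

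What is missing is a parabolic/geometric regularity input, which is how the paper proceeds.
\begin{itemize}
\item For (v), the paper uses consistency with the classical flow \cite[Lemma~4.2]{Schulze2008}. Since $\partial D$ is smooth with $H_\eps>0$, its smooth mean curvature flow $\{\Sigma_t\}_{t\in[0,\bar t)}$ exists, stays strictly mean convex and moves strictly inward. It sweeps a collar on which $u_\eps$ coincides with the arrival time of this flow, which is smooth up to $\partial D$ with $|\nabla_\eps u_\eps|=1/H_\eps>0$.
\item For (iv), the paper lifts to $(D\times\R,g_\eps+ds^2)$ via $U_\delta(p,s)=u^\delta_\eps(p)-\delta s$. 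Its level sets are smooth mean curvature flows converging as Brakke flows to $\{u_\eps=t\}\times\R$. White's local regularity theorem \cite[Theorem~7.3]{white05} upgrades this to smooth convergence near points of $\mathrm{Reg}(u_\eps)$. Hence the mean curvatures $1/\sqrt{|\nabla_\eps u^\delta_\eps|^2+\delta^2}$ converge to $1/|\nabla_\eps u_\eps|$, which is (iv).
\end{itemize}
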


 \begin{proof} 
 The Euclidean version of items (i), (ii), (iii) and (v) are all worked out in the fundamental paper by Evans-Spruck \cite[Section 7.3]{evansspruck91}. The claimed Riemannian version of (i), (ii) and (iii) follow with minor modifications of \cite[Theorem 7.4]{evansspruck91} once suitable  $C^1(\overline{D})$ estimates, independent of $\delta$, are proved for $u_\eps^\delta$.  Such estimates can be found combining \cite[Lemma 3.1, Lemma 3.2 and Lemma 3.3]{Schulze2008}, that in fact deal with more general flows. Property (v) follows from the consistency with the classical mean curvature flow, namely, there exists $\overline{t} > 0$ such that $\{u_\eps = t\} = \Sigma_t$ for $0 \leq t < \overline{t}$, where the $\{\Sigma_t\}_{t \in [0, \overline{t})}$ is the mean curvature flow of $\partial D$, see \cite[Lemma 4.2]{Schulze2008} for a precise statement.
   We focus now on the local smooth convergence on an arbitrary compact set $K \subset \operatorname{Reg}(u_\varepsilon)$. We detail an argument that one can find outlined, e.g., along the proof of \cite[Theorem 3]{white13}.
   Consider the product manifold $D \times \mathbb{R}$ equipped with the product metric $ g_\varepsilon + ds^2$, where $ds^2$ is the Euclidean metric on $\mathbb{R}$. We introduce the auxiliary functions $U, U_\delta :D \times \R \to \R$ 
    \begin{equation}
        U_\delta(p,s) = u_\varepsilon^\delta(p) - \delta s, \qquad U(p,s) = u_\varepsilon(p).
    \end{equation}
    
    As observed first in \cite[Section 7.1]{evansspruck91}, the level sets $\Gamma_t^\delta = \{U_\delta = t\}$ of $U_\delta$  smoothly evolve by mean curvature flow $(D \times \R, g_\eps + ds^2)$,
    that is $U_\delta$ satisfies 
    \begin{equation}
    \label{eq:mcflifted}
   -\Delta U_\delta + \frac{\nabla \nabla U_\delta(\nabla U_\delta, \nabla U_\delta)}{|\nabla U_\delta|^2} = 1,
\end{equation}
where the quantities are computed in terms of the Levi Civita connection of $g_\eps + ds^2$.
    By \cite[Section 5]{Schulze2008} (see \cite[Proposition 2.5]{metzgershulze08} for a precise statement in the setting of flat $\R^n$)  
  the sequence of flows $\{\Gamma_t^\delta\}_{t\geq 0}$ converges to $\{\Gamma_t\}_{t \geq 0}$ where $\Gamma_t =\{u_\eps = t\} \times \R$ in the so called weak Brakke flow sense. Let now $p \in \mathrm{Reg} (u_\varepsilon)$. Then, $(p,s,  t)$ is a smooth point for the hypersurface $\Gamma_t \times \R \times \R^+$ sitting inside $D \times \R \times \R^{+}$ for any $s \in \R$. By Brakke's regularity theorem, in particular by \cite[Theorem 7.3]{white05}, we deduce that there exists a neighborhood $V_{(p,t)}$ of $(p, t)$ 
  such that, at a fixed $\overline{s}$,
  $\Gamma_{\tau}^\delta \to \Gamma_\tau$   smoothly in any compact subset of $V_{(p,t)}$.
 Observe now that the mean curvature of the hypersurface $\Gamma_\tau^\delta$ induced by the product metric of $D \times \R$ is given, through \eqref{eq:mcflifted}, by
  \[
H_\eps^\delta = \frac{1}{\sqrt{|\nabla_\varepsilon u_\varepsilon^\delta|^2 + \delta^2}}.
  \]
Similarly, the mean curvature of the level set $\Gamma_\tau$ on regular points is computed through \eqref{RepsMCFprobl} by 
\[
H_\eps = \frac{1}{{|\nabla_\varepsilon u_\varepsilon|}}.
  \]
  We deduce, that, given $p \in \mathrm{Reg}(u_\eps)$, there exists a neighborhood $V_p $ of $p$ such that $|\nabla_\eps u_\eps^\delta|  \to |\nabla_\eps u_\eps|$ in any compact subset of $V_p$. With a covering we deduce the stated locally uniform convergence in $\mathrm{Reg}(u_\eps)$. 
  \end{proof}

  \begin{rem}
  It has been proved by Colding-Minicozzi \cite[Corollary 2.2]{colding-minicozzi2016-arrival} that the arrival time function $u$ of the mean curvature flow of a strictly mean-convex bounded smooth domain $D$ is $C^{1, 1}$, and that in fact
  \[
  \mathrm{Reg}(u) = \{p \in D \, | \nabla u(p) \neq 0  \}.
  \]
 We do not need this sharper description in our work.
\end{rem}

    \subsection{Stable gradient estimates}
We derive here suitable gradient bounds for $u_\eps^\delta$ that are stable both for $\eps$ and $\delta$, in the axisymmetric or non-characteristic case.

We are now ready to establish the uniform gradient estimate for the limit solution $u_\varepsilon$. The following is the main technical result leading to the existence of the mean-convex mean curvature flow in the axisymmetric case. 

\begin{prop}[Global gradient estimate-axisymmetric case]\label{stimagradeps}
    Let $ D \subset \mathbb{R}^3$ be a bounded, smooth, axisymmetric open set. Assume that $D$ is strictly mean convex with respect to the Riemannian metric $g_\varepsilon$, i.e., $H_\varepsilon(\partial D) \ge c_\varepsilon > 0$. 
    
    Let $u_\varepsilon$ be unique  continuous viscosity solution to \eqref{RepsMCFprobl} given by Theorem \ref{thm:mcfriem}. Then, 
    \begin{equation}
    \label{eq:lip-ueps-axis}
        |u_\varepsilon(p)-u_\varepsilon(q)| \le \left(\sup_{\partial D} |\nabla_\varepsilon u_\varepsilon|\right) d(p,q) \quad \text{for all } p,q \in \overline{ D}.
    \end{equation}
    Furthermore, if $c_\varepsilon \ge c_0 > 0$ with $c_0$ independent of $\varepsilon$, then there exists a uniform constant $C > 0$, independent of $\varepsilon$, such that:
    \begin{equation}
    \label{eq:stimalip-unif-ueps}
       |u_\varepsilon(p)-u_\varepsilon(q)| \le \frac{1}{c_0} \, d(p, q) \quad \text{for all } p,q \in \overline{ D}.
    \end{equation}
\end{prop}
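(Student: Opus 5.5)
The plan is to work first at the level of the regularized solutions $u_\varepsilon^\delta$ of \eqref{RepsMCFproblreg} on $D$ and then pass to the limit $\delta\to0$. This uses Theorem~\ref{thm:mcfriem}(ii) for uniform convergence and (iv)--(v) for the convergence of gradients near $\partial D$.

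\textbf{Step 1: rotation invariance.} Since $D$ is axisymmetric and the problem is invariant under rotations about the $z$-axis (Remark~\ref{rem:invariance}, which carries over to $g_\varepsilon$ because $Z$ is rotation invariant), uniqueness in Theorem~\ref{thm:mcfriem}(i) gives $u_\varepsilon^\delta\circ\mathcal R_\theta=u_\varepsilon^\delta$. By Remark~\ref{rem:rotinvariancegrad}, $|\nabla_\varepsilon u_\varepsilon^\delta|=|\tilde\nabla_\varepsilon u_\varepsilon^\delta|$ pointwise. Combining this with Proposition~\ref{thm:gradest_ed} yields
\[
\sup_{D}|\nabla u_\varepsilon^\delta|\le \sup_D|\nabla_\varepsilon u_\varepsilon^\delta|=\sup_D|\tilde\nabla_\varepsilon u_\varepsilon^\delta|\le\sup_{\partial D}|\tilde\nabla_\varepsilon u_\varepsilon^\delta|=\sup_{\partial D}|\nabla_\varepsilon u_\varepsilon^\delta|.
\]

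\textbf{Step 2: from a horizontal gradient bound to a $d$-Lipschitz bound.} Let $\gamma$ be a horizontal curve joining $p$ and $q$. Then
\[
\tfrac{d}{dt}u_\varepsilon^\delta(\gamma)=\langle\nabla u_\varepsilon^\delta,\dot\gamma\rangle\le|\nabla u_\varepsilon^\delta|\,|\dot\gamma| .
\]
Here I must handle curves leaving $\overline D$: extend $u_\varepsilon^\delta$ by $0$ outside $D$. The extension is Lipschitz along horizontal curves, and its bound is still controlled by $\sup_{\overline D}|\nabla u_\varepsilon^\delta|$, because the segments of $\gamma$ outside $D$ contribute nothing. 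Taking the infimum over $\gamma$ gives
\[
|u_\varepsilon^\delta(p)-u_\varepsilon^\delta(q)|\le \Big(\sup_{\partial D}|\nabla_\varepsilon u_\varepsilon^\delta|\Big)\, d(p,q).
\]

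\textbf{Step 3: limit $\delta\to0$.} By Theorem~\ref{thm:mcfriem}(v) and (iv), $u_\varepsilon$ is smooth with nonvanishing gradient on a collar $U_r$, and $|\nabla_\varepsilon u_\varepsilon^\delta|\to|\nabla_\varepsilon u_\varepsilon|$ there. This is the delicate point, since convergence is only on compact sets and we need it up to $\partial D$. My first approach is to use the $C^1(\overline D)$ estimates of Schulze cited in the proof of Theorem~\ref{thm:mcfriem}, which give uniform boundary gradient bounds for $u_\varepsilon^\delta$. I would then run Steps 1--2 on the slightly shrunk domains $\{u_\varepsilon>s\}$ for small $s$. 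These are smooth and axisymmetric for small $s$ by (v), and $u_\varepsilon-s$ solves the same problem on them. Their boundaries lie inside $U_r$, where the convergence holds, so we obtain
\[
|u_\varepsilon(p)-u_\varepsilon(q)|\le\Big(\sup_{\{u_\varepsilon=s\}}|\nabla_\varepsilon u_\varepsilon|\Big)d(p,q).
\]
Letting $s\to0$ and using smoothness of $u_\varepsilon$ on $\overline{U_r}$ gives \eqref{eq:lip-ueps-axis}.

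\textbf{Step 4: the uniform bound.} On $\partial D$ we have $u_\varepsilon=0$ and $\nabla_\varepsilon u_\varepsilon\neq0$, and $u_\varepsilon$ is smooth up to the boundary. Evaluating the equation \eqref{RepsMCFprobl} there gives $H_\varepsilon|\nabla_\varepsilon u_\varepsilon|=1$, as in the formal computation \eqref{eq:formal}. Hence
\[
\sup_{\partial D}|\nabla_\varepsilon u_\varepsilon|=\frac{1}{\inf_{\partial D} H_\varepsilon}\le \frac1{c_0},
\]
which yields \eqref{eq:stimalip-unif-ueps}.

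I expect Step 3 to be the main obstacle. The justification that boundary gradients pass to the limit, equivalently that the level sets $\{u_\varepsilon=s\}$ are admissible smooth axisymmetric domains on which the regularized argument can be rerun, must be made carefully.
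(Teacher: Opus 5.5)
Your strategy is the paper's. Rotation invariance of $u_\varepsilon^\delta$ turns Proposition~\ref{thm:gradest_ed} into a bound on $|\nabla_\varepsilon u_\varepsilon^\delta|$ over a region $\{u_\varepsilon\ge s\}$ whose boundary lies in the collar of Theorem~\ref{thm:mcfriem}(v). Then one lets $\delta\to0$ by (iv), $s\to0$ by (v), and uses $H_\varepsilon|\nabla_\varepsilon u_\varepsilon|=1$ on $\partial D$. Your use of $|\nabla u|\le|\nabla_\varepsilon u|$ along horizontal curves is equivalent to the paper's passage through $d_\varepsilon\le d$.

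However, Step 3 does not work as written. Running ``Steps 1--2 on $\{u_\varepsilon>s\}$'', with $u_\varepsilon-s$ as the solution there, means using the regularized solutions of the Dirichlet problem on the new domain. For those you would need gradient convergence on $\{u_\varepsilon=s\}$, which is now \emph{their} boundary. That is the original difficulty, merely relocated, since Theorem~\ref{thm:mcfriem}(iv) concerns only the approximants $u_\varepsilon^\delta$ on $D$. The missing observation is that the proof of Proposition~\ref{thm:gradest_ed} uses only the interior equation and the maximum principle for the linear operator $\mathcal L$, never the Dirichlet data. Hence it applies to $u_\varepsilon^\delta$ itself on $\{u_\varepsilon>s\}$, giving $\sup_{\{u_\varepsilon\ge s\}}|\nabla_\varepsilon u_\varepsilon^\delta|\le\sup_{\{u_\varepsilon=s\}}|\nabla_\varepsilon u_\varepsilon^\delta|$. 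The right-hand side converges by (iv), because $\{u_\varepsilon=s\}$ is a compact subset of $U_r\subset\mathrm{Reg}(u_\varepsilon)$. This is exactly what the paper does. Rotation invariance is inherited from $D$, so axisymmetry of the new domain and the identification of its viscosity solution are not needed.

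With this choice, Step 2 cannot be rerun verbatim. $u_\varepsilon^\delta$ is not constant on $\{u_\varepsilon=s\}$, so it cannot be continued by a constant across that set, and near-minimizing horizontal curves between points of $\{u_\varepsilon>s\}$ may leave it. Instead, fix $p,q\in\{u_\varepsilon>s\}$ and a horizontal curve $\gamma$ joining them. If $\gamma$ stays in $\{u_\varepsilon>s\}$, estimate directly. Otherwise let $t_1\le t_2$ be the first and last times at which $\gamma$ meets $\{u_\varepsilon=s\}$; the two end arcs stay in $\{u_\varepsilon\ge s\}$. Estimate $u_\varepsilon^\delta$ along these arcs and let $\delta\to0$, using uniform convergence on the compact set $\{u_\varepsilon\ge s\}\subset D$. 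Then use $u_\varepsilon(\gamma(t_1))=u_\varepsilon(\gamma(t_2))=s$.

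Finally, drop the appeal to Schulze's $C^1(\overline D)$ estimates. A uniform bound on $|\nabla_\varepsilon u_\varepsilon^\delta|$ along $\partial D$ does not produce the constant $\sup_{\partial D}|\nabla_\varepsilon u_\varepsilon|$ in \eqref{eq:lip-ueps-axis}, and after shrinking it is not needed.
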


\begin{proof}
 By Theorem \ref{thm:mcfriem}, (v), equation \eqref{RepsMCFprobl} holds classically in a neighborhood of $\partial D$ and up to $\partial D$, and thus
     \begin{equation}
     \label{eq:gradsulbordo}
    \sup_{\partial D}|\nabla_\eps u_\eps| = \frac{1}{\min_{\partial D} H_\eps} \le  \frac{1}{c_\eps}.
    \end{equation}
    By the continuity of $\nabla_\eps u_\eps$ in such neighborhood, there exists $\tilde\eta > 0$ such that the compact set
    \[
        \Gamma_\eta = \{p \in  D \mid u_\eps(p)= \eta\}
    \]
    is disjoint from the critical set for each $\eta<\tilde{\eta}$, i.e., $|\nabla_\eps u_\eps| \neq 0$ in $\Gamma_\eta$.\\

    We consider now the approximators $u_\eps^\delta$: for every $\eta<\tilde\eta$, applying \textit{Theorem~\ref{thm:gradest_ed}} in the set $\{u_\eps\ge\eta\}$, coupled with \eqref{rem:rotinvariancegradeps}, we get
    \begin{equation}
    \label{eq:pmaxinproofaxi}
    \sup_{\{u_\eps\ge\eta\}}|\nabla_\eps u_\eps^\delta|\le \sup_{\Gamma_\eta}|\nabla_\eps u_\eps^\delta|.
    \end{equation}  
    {The above inequality implies, letting $\delta \to 0$ and using (i),  (iv) and(v) in Theorem \ref{thm:mcfriem}, that, for any two points $p, q$ in $\{u_\eps > \eta\}$,
    \[
|u_\eps (p) - u_\eps(q)| \leq \sup_{\Gamma_\eta} |\nabla_\eps u_\eps|d_\eps (p, q) \leq \sup_{\Gamma_\eta} |\nabla_\eps u_\eps|d (p, q). 
    \]
    We also used Proposition \ref{prop:deps-d} in the second inequality.
Letting $\eta \to 0$ and using again (v) in Theorem \ref{thm:mcfriem} yields \eqref{eq:lip-ueps-axis}. Using that on $\partial D$ $|\nabla u_\eps|= H_\eps^{-1}$ \eqref{eq:stimalip-unif-ueps} follows.}
\end{proof}


In the noncharacteristic case, we can a priori obtain an upper bound on the right hand side of \eqref{eq:gradest_edtilde}, arguing similarly to the Euclidean case treated in \cite{evansspruck91}. In particular, we shall prove the following result.
\begin{prop}
    \label{thm:gradest_ednoncar}
    Let $ D \subset \R^3$ be a bounded, smooth non-characteristic open set. Assume that $ D$ is strictly horizontally mean convex, i.e., $H(\partial D) \ge c > 0$. 
    Let $u_\varepsilon$ be the viscosity solution to \eqref{RepsMCFprobl} with initial datum $ D$. Then, there exists a uniform constant $C > 0$, independent of $\varepsilon$ and $\delta$ such that
    \begin{equation}\label{eq:uniform_right_est}
    \|\tilde\nabla_\eps u_\eps^\delta\|_{L^\oo( D)}\le C.
    \end{equation}
\end{prop}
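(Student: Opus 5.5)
By Proposition~\ref{thm:gradest_ed}, applied with $D$ itself, we have $\|\tilde\nabla_\eps u_\eps^\delta\|_{L^\infty(D)}\le \|\tilde\nabla_\eps u_\eps^\delta\|_{L^\infty(\partial D)}$. It therefore suffices to bound the boundary gradient uniformly in $\eps$ and $\delta$. Since $u_\eps^\delta=0$ on $\partial D$, the full Euclidean gradient on $\partial D$ is normal and equals $|\partial_n u_\eps^\delta|$. Moreover, $|\tilde\nabla_\eps f|^2=(\tilde Xf)^2+(\tilde Yf)^2+\eps^2(Zf)^2$, and the fields $\tilde X,\tilde Y,Z$ have coefficients bounded on the compact set $\overline D$. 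Hence $|\tilde\nabla_\eps u_\eps^\delta|\le C_D|\nabla_E u_\eps^\delta|=C_D|\partial_n u_\eps^\delta|$ on $\partial D$ for $\eps\le1$. The task thus reduces to a uniform bound on the normal derivative at the boundary. Following \cite{evansspruck91}, we will obtain it from a barrier $w$ with $0\le u_\eps^\delta\le w$ near $\partial D$ and $w=0$ on $\partial D$.

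\textbf{Barrier.} Let $d_E$ be the Euclidean distance to $\partial D$. It is smooth in a collar $N_\rho=\{0<d_E<\rho\}$, with $\rho$ depending only on $D$. We take $w=\lambda d_E-\mu d_E^2$, or alternatively $w=\frac{1}{\kappa}(1-e^{-\kappa' d_E})$, with constants to be fixed. We write $Q_\eps^\delta[w]=-\Delta_\eps w+\frac{\hesseps w(\gradeps w,\gradeps w)}{|\gradeps w|^2+\delta^2}$ and aim to show $Q_\eps^\delta[w]\ge 1$ in $N_\rho$. Since $\gradeps w=w'(d_E)\gradeps d_E$ and $\hesseps w=w'\hesseps d_E+w''\,\gradeps d_E\otimes\gradeps d_E$, we get
\[
Q_\eps^\delta[w]=w'\Big(-\Delta_\eps d_E+\tfrac{\hesseps d_E(\gradeps d_E,\gradeps d_E)}{|\gradeps d_E|^2+\delta^2/w'^2}\Big)-w''|\gradeps d_E|^2\Big(1-\tfrac{w'^2|\gradeps d_E|^2}{w'^2|\gradeps d_E|^2+\delta^2}\Big).
\]
The last term is nonnegative because $w''\le0$. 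For the first term we use the non-characteristic hypothesis. Because $\partial D$ is compact and non-characteristic, $|\nabla d_E|\ge c_1>0$ on $\partial D$, and hence on $N_\rho$ after shrinking $\rho$. As $\eps\to0$ and $\delta\to0$, the bracket converges uniformly on $\overline{N_\rho}$ to $|\nabla d_E|$ times the horizontal mean curvature of the parallel surfaces $\{d_E=s\}$; this is the observation \eqref{eq:HepstoH}, applied with the $\delta$-term included. This quantity is at least $c\,c_1/2>0$ on $\partial D$, and therefore on $N_\rho$ for small $\rho$. Choosing $\lambda\ge w'\ge 4/(c c_1)$ then gives $Q_\eps^\delta[w]\ge1$ for $\eps,\delta$ small.

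\textbf{Comparison.} We fix $\mu$ so that $w\ge \max u_\eps^\delta$ on $\{d_E=\rho\}$, while keeping $w'>0$ on $[0,\rho]$. The sup bound on $u_\eps^\delta$ is uniform: it follows by comparison with a large paraboloid-type supersolution, for instance $A-|x|^2-|y|^2$, for which $Q\ge 1$ once $A$ is large. The comparison principle for the regularized operator \cite[Proposition~4.3]{FernandezRealRosOton22} then gives $u_\eps^\delta\le w$ in $N_\rho$. Since also $u_\eps^\delta\ge0$ (the constant $0$ is a subsolution), we obtain $|\partial_n u_\eps^\delta|\le w'(0)=\lambda$ on $\partial D$. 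This yields \eqref{eq:uniform_right_est}.

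\textbf{Main obstacle.} The delicate step is the uniform positivity of the bracket in $N_\rho$ for all small $\eps$ and $\delta$ simultaneously. The term $\delta^2/w'^2$ perturbs the projection, and the $\eps^2 Z$-direction contributes to $|\gradeps d_E|$. I would control both terms by the uniform lower bound $|\gradeps d_E|\ge c_1$ together with $C^2$ bounds on $d_E$, so that the error in the bracket is $O(\eps+\delta)$ uniformly on $N_\rho$. For $\eps$ and $\delta$ not small, the bound is finite for each fixed pair, and only the limiting regime needs to be uniform.
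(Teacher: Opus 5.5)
Your proof is correct. It shares the paper's skeleton: reduction to $\partial D$ via Proposition~\ref{thm:gradest_ed}, a barrier $w=w(d_E)$ in a collar, comparison for the regularized operator, and $|\tilde\nabla_\eps u_\eps^\delta|\le C\,|\partial_n u_\eps^\delta|$ on $\partial D$. The barrier, however, is genuinely different.

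The paper uses $w=\lambda g(d_E)$ with the convex profile $g(s)=\log\frac{s_0}{s_0-s}$, which blows up at $s_0$. This makes comparison on the inner boundary of the collar automatic, so no a priori $L^\infty$ bound on $u_\eps^\delta$ is needed. The cost is that the $\delta$-term $\lambda\delta^2 g''|\gradeps d_E|^2/(\lambda^2g'^2|\gradeps d_E|^2+\delta^2)$ has the bad sign. The paper tames it by imposing $g''=g'^2$, which makes it at most $\delta^2/\lambda$, and then taking $\lambda$ large.

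Your linear or concave profile makes that term vanish or have the good sign, so everything reduces to uniform positivity of $|\nabla d_E|H$ on the parallel surfaces. The price is a uniform bound on $\sup u_\eps^\delta$, and your cylinder barrier does provide it. For $v=A-x^2-y^2$ one has $\gradeps v=-2xX-2yY$, $\Delta_\eps v=-4$ and $\hesseps v(\gradeps v,\gradeps v)=-8(x^2+y^2)$; the $O(1/\eps)$ Hessian entries do not contribute since $Zv=0$. Hence $Q_\eps^\delta[v]=4-\frac{8r^2}{4r^2+\delta^2}\ge 2$ for all $\eps,\delta$, independently of $A$. The condition $A\ge\max_{\partial D}r^2$ is needed only to get $v\ge0$ on $\partial D$, and as a by-product you obtain a uniform bound on the extinction time.

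Three bookkeeping points need fixing.
\begin{itemize}
\item It is $\lambda$, not $\mu$, that must be enlarged to get $w\ge\sup u_\eps^\delta$ on $\{d_E=\rho\}$. Simply take $\mu=0$ and $\lambda\ge\max\{4/(cc_1),A/\rho\}$.
\item Uniform control of the $\delta$-perturbation needs $\hesseps d_E(\gradeps d_E,\gradeps d_E)$ bounded uniformly in $\eps$. This does not follow merely from $C^2$ bounds on $d_E$. It holds because the $O(1/\eps)$ entries of $\hesseps d_E$ cancel under double contraction, which the paper checks explicitly.
\item ``Finite for each fixed pair'' does not give a uniform constant. As in the paper, uniformity is obtained, and needed, only for small $\eps$, where the collar is $g_\eps$-mean convex. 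With your linear barrier the smallness of $\delta$ can even be replaced by $\delta\le1$: the $\delta$-error in the bracket is bounded by $C\delta^2/(\lambda^2c_1^4)$, so it is absorbed by enlarging $\lambda$.
\end{itemize}
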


Once estimate \eqref{eq:uniform_right_est} is established, it implies that the family $\{u_\varepsilon\}$ is uniformly Lipschitz continuous with respect to the right-invariant distance $\tilde{d}_\varepsilon$.  We  can then invoke the Ascoli-Arzel\'a Theorem to conclude the existence of a uniformly convergent subsequence.\\

Now we can prove the uniform right-invariant Lipschitz estimates of the viscosity solutions $\{u_\eps\}$ to problem \eqref{RepsMCFprobl} in the non-characteristic case.

\begin{proof}[Proof of Proposition~\ref{thm:gradest_ednoncar}]
 By Theorem~\ref{thm:gradest_ed}, the maximum of the gradient is attained on the boundary.
    Thus, to conclude the proof, we only need to establish a uniform bound for $\|\tilde\nabla_\varepsilon u_\varepsilon^\delta\|_{L^\infty(\partial D)}$ independent of $\varepsilon$ and $\delta$.
    
    We consider the regularized operator
    \[
        M_\delta v = \Delta_\varepsilon v - \frac{\nabla_\eps \nabla_\eps v(\nabla_\varepsilon v,\nabla_\varepsilon v)}{|\nabla_\varepsilon v|^2+\delta^2}.
    \]
    By definition, $M_\delta u_\varepsilon^\delta = -1$. Let $d_E$ be the Euclidean distance function from $\partial D$ defined as
    \begin{equation}
    d_E(p) = \min_{q \in \partial D} |p - q|,
\end{equation}
    and let
    \[
        w = \lambda g(d_E).
    \]
    We are going to choose $\lambda > 0$, $s_0 > 0$  and $g = g(s) \in C^2([0, s_0))$ later in the proof, to ensure that $w$ acts as a barrier for $u_\eps^\delta$ close to $\partial D$.
    
    
  A direct computation gives:
    \begin{itemize}
        \item $\nabla_\varepsilon w = \lambda g'\nabla_\varepsilon d$;
        \item $\nabla_\eps \nabla_\eps w = \lambda\left[g'\nabla_\eps \nabla_\eps d + g'' \nabla_\varepsilon d \otimes \nabla_\varepsilon d \right]$;
        \item $\Delta_\varepsilon w = \lambda[g'\Delta_\varepsilon d + g''|\nabla_\varepsilon d|^2]$;
        \item $\nabla_\eps \nabla_\eps w(\nabla_\varepsilon w,\nabla_\varepsilon w) = \lambda^3\left[ g'^3\nabla_\eps \nabla_\eps d(\nabla_\varepsilon d,\nabla_\varepsilon d) + g''g'^2|\nabla_\varepsilon d|^4 \right]$,
    \end{itemize}
    that yield
    \begin{align}
    \begin{split}
    \label{eq:formulaMw}
        M_\delta w &= \lambda [g'\Delta_\varepsilon d+g''|\nabla_\varepsilon d|^2] - \lambda^3\frac{g'^3\nabla_\eps \nabla_\eps d(\nabla_\varepsilon d,\nabla_\varepsilon d) + g''g'^2|\nabla_\varepsilon d|^4}{\lambda^2g'^2|\nabla_\varepsilon d|^2+\delta^2} \\
        &= \lambda g'\left[\Delta_\varepsilon d - \lambda^2 g'^2\frac{\nabla_\eps \nabla_\eps d(\nabla_\varepsilon d,\nabla_\varepsilon d)}{\lambda^2g'^2|\nabla_\varepsilon d|^2+\delta^2}\right] + \frac{\lambda\delta^2 g''|\nabla_\varepsilon d|^2}{\lambda^2g'^2|\nabla_\varepsilon d|^2+\delta^2}.
        \end{split}
    \end{align}
   Let us analyze first the term inside the brackets, defining it as $F_\delta$:
    \[
        F_\delta(d_E) = \Delta_\varepsilon d_E - \frac{\lambda^2 g'^2}{\lambda^2g'^2|\nabla_\varepsilon d_E|_\eps^2+\delta^2} \nabla_\eps \nabla_\eps d_E(\nabla_\varepsilon d_E,\nabla_\varepsilon d_E).
    \]
    Observe that $F_0 (d_E)(p) = -|\nabla_\eps d_E|(p)H_\eps(p)$, where $H_\eps(p)$ is the $g_\eps$-mean curvature of $\{d_E = d_E(p)\}$ computed at $p$.
     Let $U_{s} = \{p \in \overline{D} \, | d_E(p) < s\}$ be such that $d_E$ is $C^2$ in $\overline{U}_{s}$ (see e.g. \cite[Lemma 14.16]{gilbargtrudinger2001}). From $\partial D$ being noncharacteristic, up to taking a smaller $s$, we can assume that $|H_\eps| \leq C$ and $C^{-1}\leq|\nabla_\eps d_E|_\eps \leq  C$ for $C > 1$ that does not depend on $\eps \leq  1$.

     Choosing $g$ in the definition of $w$ such that 
     \begin{equation}
     \label{eq:request1g'}
   g'(s) > a > 0 \quad  \text{for}\quad   0 \leq s < s_0, 
   \end{equation}
   we observe that
\[
|F_\delta(d_E) - F_0(d_E)| = \frac{|\nabla_\eps\nabla_\eps d_E(\nabla_\eps d_E,\nabla_\eps d_E)|}{|\nabla_\eps d_E|_\eps^2}\, \frac{\delta^2}{\lambda^2 g'(d_E)^2 |\nabla_\eps d_E|^2 + \delta^2} \leq C \delta^2
\]
on ${U}_{s_0}$ for some $C > 0$ that does not depend on $\eps$ nor on $\lambda \geq 1$. 

This bound on $C$ is genuinely uniform in $\eps\le1$: in the $g_\eps$-orthonormal frame $\{X,Y,\eps Z\}$, the only entries of $\nabla_\eps\nabla_\eps d_E$ carrying a factor $1/\eps$ are
\[
\nabla_\eps\nabla_\eps d_E(X,\eps Z)=\eps X(Zd_E)+\frac{1}{2\eps}Yd_E, \qquad \nabla_\eps\nabla_\eps d_E(Y,\eps Z)=\eps Y(Zd_E)-\frac{1}{2\eps}Xd_E.
\]
Contracted twice with $\nabla_\eps d_E$, these contribute
\[
\frac1\eps\big[(Xd_E)(Zd_E)(Yd_E)-(Yd_E)(Zd_E)(Xd_E)\big]=0,
\]
which cancels exactly. The remaining terms involve only Euclidean second derivatives of $d_E$, bounded on $\overline{U}_{s_0}$, times factors $\eps\le1$; hence $\nabla_\eps\nabla_\eps d_E(\nabla_\eps d_E,\nabla_\eps d_E)$, and so $C$ above, is bounded uniformly in $\eps\le1$.

Using again the request \eqref{eq:request1g'}, we deduce that the first summand in the rightmost hand side of \eqref{eq:formulaMw} can be bounded by
\begin{equation}
\label{eq:primoboundmwausiliare}
\lambda g'(d_E)\left[\Delta_\varepsilon d_E - \lambda^2 (g'(d_E))^2\frac{\nabla_\eps \nabla_\eps d_E(\nabla_\varepsilon d_E,\nabla_\varepsilon d_E)}{\lambda^2(g'(d_E))^2|\nabla_\varepsilon d_E|^2+\delta^2}\right] \leq \lambda g'(d_E)(-H_\eps |\nabla_\eps d_E| + C\delta^2). 
\end{equation}
Observe that, by the strict mean convexity of $\partial D$, we can assume up to taking a smaller $s_0$ that $H_\eps (p) > 2c >  0$ for any $p$ in $\overline{U}_{s_0}$ .
Let then $\delta_0 > 0$ be small enough such that the term in round brackets is smaller than $- c < 0$ for any $0 <\delta \leq \delta_0 $ and any $p \in U_{s_0}$. Then, for any such $\delta$, \eqref{eq:primoboundmwausiliare} yields
\begin{equation}
\label{eq:primobound}
\lambda g'(d_E)\left[\Delta_\varepsilon d_E - \lambda^2 (g'(d_E))^2\frac{\nabla_\eps \nabla_\eps d_E(\nabla_\varepsilon d_E,\nabla_\varepsilon d_E)}{\lambda^2(g'(d_E))^2|\nabla_\varepsilon d_E|^2+\delta^2}\right]\leq - C\lambda
\end{equation}
for any $\lambda \geq 1$, and some new constant $C >0$ that does not depend on $\eps$ nor on $\delta$. \\

    The second term in \eqref{eq:formulaMw} can be bounded assuming $g'' > 0$ by
    \begin{equation}
    \label{eq:secondobound}
        \frac{\lambda\delta^2 g''|\nabla_\varepsilon d|^2}{\lambda^2g'^2|\nabla_\varepsilon d|^2+\delta^2} \le \frac{\lambda\delta^2 g''|\nabla_\varepsilon d|^2}{\lambda^2g'^2|\nabla_\varepsilon d|^2} = \frac{\delta^2 g''}{\lambda g'^2}.
    \end{equation}
    So,  choosing $g$ as a solution to the ODE 
    \begin{equation}
    \label{eq:request2}
    g'' = (g')^2,
    \end{equation}
    we obtain, by plugging \eqref{eq:primobound} and \eqref{eq:secondobound} into \eqref{eq:formulaMw} 
    \[
         M_\delta w < C\left(\frac{1}{\lambda}-\lambda\right)
    \]
    for any $0 < \delta \leq \delta_0 < 1$, $\lambda \geq 1$ with a constant $C$ that is independent of $\delta$ and $\eps$. Taking now $\lambda$ big enough guarantees that
   \begin{equation}
   \label{eq:subsolM}
        M_\delta w < -1 = M_\delta u_\varepsilon^\delta \quad \text{in } \{0 < d < s_0\}.
    \end{equation}
    Finally, we request 
    \begin{equation}
    \label{eq:request3}
g= 0 \quad \text{on} \quad \!\!\partial D, \quad   \lim_{s \to s_0^-}g(s) \to +\infty.
    \end{equation}
  Thus, by \eqref{eq:subsolM} and \eqref{eq:request3} we can apply the comparison principle for $M_\delta$ (see e.g. \cite[Proposition 4.3]{FernandezRealRosOton22}) in $\overline{U}_s$ for any $s$ close to $s_0$, and deduce that 
  \[
  u_\eps^\delta \leq w \quad \text{in} \quad \overline{U}_s.
  \]
  An explicit function $g$ fulfilling the requests made in \eqref{eq:request1g'}, \eqref{eq:request2}, and \eqref{eq:request3} is $g: [0, s_0) \to \mathbb{R}$ defined by
    \[
        g(s) = \log\left(\frac{s_0}{s_0-s}\right).
    \]

       To evaluate the normal derivative at the boundary, consider an inward Euclidean normal curve $\gamma(t)$ starting at $p \in \partial D$ such that $d_E(\gamma(t)) = t$. Since $u_\varepsilon^\delta(p) = w(p) = 0$, we have
    \[
        \frac{u_\varepsilon^\delta(\gamma(t)) - u_\varepsilon^\delta(\gamma(0))}{t} \leq \frac{w(\gamma(t)) - w(\gamma(0))}{t} = \lambda \frac{g(t) - g(0)}{t}.
    \]
    Taking the limit as $t \to 0^+$ gives $\partial_{\nu_E} u_\varepsilon^\delta(p)\le \lambda g'(0) = \lambda/s_0$; since moreover $u_\varepsilon^\delta>0$ in $D$ (Theorem~\ref{thm:mcfriem}) and $u_\varepsilon^\delta(\gamma(0))=0$, the same quotient is also $\ge0$ as $t\to0^+$, so
    \[
        \left|\partial_{\nu_E} u_\varepsilon^\delta(p) \right|\leq \frac{\lambda}{s_0}.
    \]
    Because $u_\varepsilon^\delta$ is constant on $\partial D$, its tangential derivatives vanish, so $\tilde\nabla_\varepsilon u_\varepsilon^\delta(p) = \big(\partial_{\nu_E} u_\varepsilon^\delta(p)\big)\tilde\nabla_\varepsilon d_E(p)$. Since $\partial D$ is compact and $\tilde X,\tilde Y,Z$ are smooth, $|\tilde\nabla_\varepsilon d_E|_{\tilde g_\eps}\le C_0$ on $\partial D$ for some $C_0>0$ independent of $\eps\le1$. Consequently,
    \[
        \|\tilde\nabla_\varepsilon u_\varepsilon^\delta\|_{L^\infty(\partial D)} \leq \frac{\lambda}{s_0}\,C_0=:C,
    \]
   completing the proof.
\end{proof}

Proposition \ref{thm:gradest_ednoncar}, (ii) in Theorem \ref{thm:mcfriem} and Proposition \ref{prop:deps-d} immediately imply the following.
\begin{cor}[Right-invariant Lipschitz bound for the viscosity solution]\label{cor:lip_viscosity_nonchar}
    Let $u_\varepsilon$ be the viscosity solution to \eqref{RepsMCFprobl} in the non-characteristic smooth domain $D$. Then $u_\varepsilon$ satisfies the global right-invariant Lipschitz estimate:
    \begin{equation}
    \label{eq:rightinvarianteps}
        |u_\varepsilon(p) - u_\varepsilon(q)| \le C \, \tilde d(p,q) \quad \forall p,q \in \overline  D,
    \end{equation}
    where $C$ is the same uniform constant from Proposition~\ref{thm:gradest_ednoncar}, independent of $\varepsilon$.
\end{cor}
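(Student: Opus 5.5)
The corollary follows by passing to the limit $\delta\to0$ in the uniform estimate of Proposition~\ref{thm:gradest_ednoncar}. The first step converts the $L^\infty$ bound $\|\tilde\nabla_\eps u_\eps^\delta\|_{L^\infty(D)}\le C$ into a Lipschitz estimate for each $u_\eps^\delta$ with respect to $\tilde d_\eps$. Take $p,q\in\overline D$ and a curve $\gamma$ tangent a.e.\ to the $\tilde g_\eps$-orthonormal frame $\{\tilde X,\tilde Y,\eps Z\}$ joining them. Along $\gamma$ we have
\[
\frac{d}{dt}u_\eps^\delta(\gamma(t))=\tilde g_\eps\bigl(\tilde\nabla_\eps u_\eps^\delta,\dot\gamma\bigr)\le C\,|\dot\gamma|_{\tilde g_\eps},
\]
and integrating gives $|u_\eps^\delta(p)-u_\eps^\delta(q)|\le C\,\mathrm{length}_{\tilde g_\eps}(\gamma)$. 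Taking the infimum over $\gamma$ yields $C\,\tilde d_\eps(p,q)$.

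One must account for the fact that $\gamma$ may exit $\overline D$, since $D$ need not be $\tilde d_\eps$-convex. I would handle this first for $p,q$ with a $\tilde d_\eps$-almost-minimizing curve inside $D$. For general $p,q$, I would use the argument of Proposition~\ref{stimagradeps}: work on sets where the estimate is only needed along curves, and extend $u_\eps^\delta$ by $0$ outside $D$. Since $u_\eps^\delta=0$ on $\partial D$ and $u_\eps^\delta\ge0$, the extension is Lipschitz across $\partial D$ with the same constant, because the gradient bound holds up to the boundary. The extended function is then globally $C$-Lipschitz with respect to $\tilde d_\eps$ on $\R^3$. I expect this to be the only mildly delicate step, and it is handled by noting that a curve leaving $D$ can be cut at its first and last boundary points, where $u_\eps^\delta$ vanishes.

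Next, by Proposition~\ref{prop:deps-d} we have $\tilde d_\eps\le\tilde d$, so $|u_\eps^\delta(p)-u_\eps^\delta(q)|\le C\,\tilde d(p,q)$ with $C$ independent of $\eps,\delta$. By item (ii) of Theorem~\ref{thm:mcfriem}, $u_\eps^\delta\to u_\eps$ pointwise (locally uniformly) in $D$ as $\delta\to0$. The inequality therefore passes to the limit for $p,q\in D$. It then extends to $\overline D$ by continuity of $u_\eps\in C^0(\overline D)$ and of $\tilde d$, which induces the Euclidean topology. This gives \eqref{eq:rightinvarianteps} with the same constant $C$.
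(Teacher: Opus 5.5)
Your proposal is correct and follows the paper's route: the paper derives the corollary directly from three ingredients, namely the uniform bound of Proposition~\ref{thm:gradest_ednoncar} (integrated along curves to give a $\tilde d_\eps$-Lipschitz bound for $u_\eps^\delta$), the comparison $\tilde d_\eps\le\tilde d$ of Proposition~\ref{prop:deps-d}, and the $\delta\to0$ limit in Theorem~\ref{thm:mcfriem}(ii). Your cutting of an almost-minimizing curve at its first exit and last re-entry points on $\partial D$, where $u_\eps^\delta=0$, correctly handles a detail the paper leaves implicit, and it needs no appeal to Proposition~\ref{stimagradeps}.
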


\subsection{Letting $\eps \to 0$: the sub-Riemannian mean-convex mean curvature flow}

Having established the uniform gradient estimates in both the axisymmetric and the non-characteristic scenarios, we are now in the position to prove the global convergence of the Riemannian solutions to suitable sub-Riemannian limit solutions. 

The following result holds in general for converging solutions to the Riemannian Heisenberg mean-convex mean curvature flow, and substantially follows \cite{evansspruck91} and \cite{capognacitti09}.

\begin{prop}
\label{prop:limitsueps}
    Let $D \subset \R^3$ be an open bounded set with $C^2$-boundary. Let, for $\eps > 0$, $u_\eps$ be a viscosity solution to 
    \[
-\Delta_\eps u_\eps + \frac{\grad_\eps \grad_\eps u_\eps(\grad_\eps u_\eps, \grad_\eps u_\eps)}{|\grad_\eps u_\eps|^2} = 1
    \]
    in $D$, and assume $u_\eps \to u$ as $\eps \to 0^+$ locally to $u \in C^0(D)$, possibly along a subsequence. Then, $u$ is a viscosity solution to \eqref{eq:HMCFeq} in the viscosity sense given by Definition \ref{dfn:viscsol}.
\end{prop}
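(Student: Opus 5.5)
We follow the standard stability argument for viscosity solutions under locally uniform convergence, as in \cite{evansspruck91} and \cite{capognacitti09}, adapted to the degeneration $g_\eps\to$ sub-Riemannian structure. We treat the subsolution property; the supersolution case is symmetric, with reversed inequalities and minima instead of maxima. Fix $p\in D$ and $\Phi\in C^2(D)$ such that $u-\Phi$ has a local maximum at $p$. Replacing $\Phi$ by $\Phi+|q-p|^4_{\R^3}$ (whose horizontal gradient and Hessian vanish at $p$), we may assume the maximum is strict on a closed ball $\bar B$. By locally uniform convergence there exist points $p_\eps\in B$ at which $u_\eps-\Phi$ attains a local maximum, with $p_\eps\to p$ along the subsequence. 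The viscosity subsolution property of $u_\eps$ then gives, at $p_\eps$, either
\[
1\ge -\Delta_\eps\Phi+\frac{\nabla_\eps\nabla_\eps\Phi(\nabla_\eps\Phi,\nabla_\eps\Phi)}{|\nabla_\eps\Phi|^2}
\quad\text{if }\nabla_\eps\Phi(p_\eps)\neq0,
\]
or
\[
1\ge -\Delta_\eps\Phi+\nabla_\eps\nabla_\eps\Phi(\eta_\eps,\eta_\eps)
\quad\text{for some }|\eta_\eps|_\eps\le1\text{ otherwise}.
\]

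We next pass to the limit using explicit formulas in the frame $\{X,Y,\eps Z\}$. We have $\Delta_\eps\Phi=X^2\Phi+Y^2\Phi+\eps^2Z^2\Phi\to\Delta\Phi$. The $g_\eps$-Hessian entries are those of the symmetrized frame Hessian plus Levi-Civita corrections. As observed in the proof of Proposition~\ref{thm:gradest_ednoncar}, the only entries carrying $1/\eps$ are the mixed $(X,\eps Z)$ and $(Y,\eps Z)$ ones, of the form $\pm\frac1{2\eps}Y\Phi$ and $\mp\frac1{2\eps}X\Phi$, and they cancel when contracted twice with $\nabla_\eps\Phi$. Writing $\nabla_\eps\Phi=(X\Phi,Y\Phi,\eps Z\Phi)$ in this frame, one gets
\[
\frac{\nabla_\eps\nabla_\eps\Phi(\nabla_\eps\Phi,\nabla_\eps\Phi)}{|\nabla_\eps\Phi|^2}
=\frac{\nabla\nabla\Phi(\nabla\Phi,\nabla\Phi)+O(\eps)\,|\nabla_\eps\Phi|^2}{|\nabla_\eps\Phi|^2}.
\]
The $O(\eps)$ terms come from $\eps$-weighted components, bounded by $\eps\,C\|\Phi\|_{C^2}$ times $|\nabla_\eps\Phi|^2$.

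There are two cases. If $\nabla\Phi(p)\ne0$, then $\nabla_\eps\Phi(p_\eps)\ne0$ for small $\eps$, and the quotient converges to $\nabla\nabla\Phi(\nabla\Phi,\nabla\Phi)/|\nabla\Phi|^2$ at $p$. This yields \eqref{eq:subsol}. If instead $\nabla\Phi(p)=0$, we set $\eta_\eps=\nabla_\eps\Phi/|\nabla_\eps\Phi|$ in the first alternative, or take the given $\eta_\eps$ in the second. In either case the inequality has the form $1\ge-\Delta_\eps\Phi+\nabla_\eps\nabla_\eps\Phi(\eta_\eps,\eta_\eps)$ with $|\eta_\eps|_\eps\le1$. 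Decompose $\eta_\eps=\xi_\eps+\zeta_\eps\,\eps Z$ with $\xi_\eps\in\mathcal D$, $|\xi_\eps|\le1$ and $|\zeta_\eps|\le1$, and extract $\xi_\eps\to\eta\in\mathcal D$ with $|\eta|\le1$.

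\textbf{Main obstacle.} The singular mixed terms give $\frac{\zeta_\eps}{\eps}\bigl((\xi_\eps)_X Y\Phi-(\xi_\eps)_Y X\Phi\bigr)$, or the corresponding symmetric-part contribution. These vanish by antisymmetry once we use the symmetric part of the Hessian, since the Levi-Civita correction $\frac12([\cdot,\cdot])$ terms are antisymmetric. If that cancellation fails for the specific connection, we would instead bound them by $\eps^{-1}|\nabla\Phi(p_\eps)|$, which is not a priori small. In that case we would perturb the test function to $\Phi_\eps=\Phi-\langle\nabla\Phi(p_\eps),\cdot\rangle$-type corrections so that the horizontal gradient vanishes at the touching point. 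Alternatively, we would use the general approach of \cite{capognacitti09}, where the $g_\eps$ second fundamental form of level sets converges using only $\nabla\nabla^*\Phi$, the symmetrized horizontal Hessian, which is what appears in Definition~\ref{dfn:viscsol}. All remaining terms are $O(\eps)$, so the limit gives $1\ge-\Delta\Phi+\nabla\nabla\Phi(\eta,\eta)$ at $p$, which is \eqref{eq:subsol_eta}.
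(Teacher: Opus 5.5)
The overall scheme matches the paper: a quartic perturbation to make the maximum strict, contact points $p_\eps\to p$, the explicit $g_\eps$-Hessian in the frame $\{X,Y,\eps Z\}$, and the limit when $\nabla\Phi(p)\neq0$. The gap is in the degenerate case $\nabla\Phi(p)=0$, where you resolve the step you flag as the main obstacle with a false claim. The $g_\eps$-Hessian is already symmetric, because the Levi-Civita connection is torsion free. The singular entries $\frac{1}{2\eps}Y\Phi$ and $-\frac{1}{2\eps}X\Phi$ sit symmetrically in the $(X,\eps Z),(\eps Z,X)$ and $(Y,\eps Z),(\eps Z,Y)$ positions. They are part of the symmetric Hessian, not an antisymmetric correction that disappears after symmetrization. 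The only entry where the connection term acts as a symmetrizer is $(X,Y)$: there $XY\Phi-\frac12 Z\Phi=\frac12(XY+YX)\Phi$, which is harmless. Their contribution $\frac{\zeta_\eps}{\eps}\bigl((\xi_\eps)_X Y\Phi-(\xi_\eps)_Y X\Phi\bigr)(p_\eps)$ does not vanish for a general $\eta_\eps$ by any symmetry. As you note, $\eps^{-1}|\nabla\Phi(p_\eps)|$ need not be small, so bounding it fails. Your fallbacks are not carried out: an $\eps$-dependent linear correction of $\Phi$ would also move the contact points, and the appeal to the literature is not an argument.

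The fix is not to merge the two alternatives, because the singular term vanishes in each for a different reason. If $\nabla_\eps\Phi(p_\eps)\neq0$ and $\eta_\eps=\nabla_\eps\Phi/|\nabla_\eps\Phi|$, then $((\xi_\eps)_X,(\xi_\eps)_Y)$ is proportional to $(X\Phi,Y\Phi)(p_\eps)$. The bracket is then exactly zero, which is the cancellation you already stated for contraction with $\nabla_\eps\Phi$. If instead $\nabla_\eps\Phi(p_\eps)=0$, then $X\Phi(p_\eps)=Y\Phi(p_\eps)=Z\Phi(p_\eps)=0$. So the singular entries, and the $\mp\frac12 Z\Phi$ corrections, vanish identically at $p_\eps$ whatever the admissible $\eta_\eps$ is. In both cases the remaining $\zeta_\eps$-terms are $\eps XZ\Phi$, $\eps YZ\Phi$ and $\eps^2 Z^2\Phi$ times factors bounded by $1$, hence $O(\eps)$. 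Extracting $\xi_\eps\to\eta\in\mathcal D$ with $|\eta|\le1$ then gives \eqref{eq:subsol_eta}. This is exactly the paper's Case 1/Case 2 split, and with it your argument goes through.
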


\begin{proof}
We assume for notational simplicity that the whole sequence $u_\eps \to u$. We show that $u$ is a subsolution. Verifying that is also a supersolution will be completely analogous.
    Consider then a test function $\Phi \in C^2(D)$ such that $u-\Phi$ has a local  maximum point at $p\in D$. Assume, for the time being, that such maximum point is strict. Since $u_{\eps}\to u$ locally uniformly, there exists a sequence of points $\{p_\eps\} \subset D$ such that $u_{\eps}-\Phi$ has a local maximum at $p_\eps$, and $p_\eps \to p$ as $\eps\to 0^+$ (see \cite[Lemma 6.1]{CrandallIshiiLions92}).
Assume first that  $\nabla \Phi(p) \neq 0$. Then, for $\eps$ small enough $\nabla_{\eps}\Phi(p_\eps)\neq 0$ and so the $u_\eps$ being a viscosity solution yields
    \begin{equation}
    \label{eq:viscosubsol-eps}
        1 \ge -\Delta_{\eps} \Phi + \frac{\nabla\nabla_{\eps}\Phi(\nabla_{\eps} \Phi,\nabla_{\eps} \Phi)}{|\nabla_{\eps} \Phi|^2} \quad \text{at } p_\eps,
    \end{equation}
   that passed to the limit as $\eps \to 0^+$, gives
    \[
        1 \ge -\Delta \Phi(p) + \frac{\nabla\nabla\Phi(\nabla \Phi,\nabla \Phi)}{|\nabla \Phi|^2}(p). 
    \]
   \\
    On the other hand, assume $\grad\Phi(p)=0$. We have to show that there exists a horizontal vector $\eta$ such that \eqref{eq:subsol_eta} is satisfied. Up taking a further subsequence, we can suppose that either $\nabla_{\eps}\Phi(p_\eps) = 0$ for each $\eps > 0$ or $\nabla_{\eps}\Phi(p_\eps) \neq 0$ for each $\eps > 0$.
Moreover, one can compute
  \begin{equation}
  \label{eq:hessianeps}
   \nabla_\eps\nabla_\eps \Phi = 
   \begin{pmatrix}
        X^2\Phi & XY\Phi - \frac{1}{2}Z\Phi & \eps XZ\Phi + \frac{1}{2\eps}Y\Phi \\
        YX\Phi + \frac{1}{2}Z\Phi & Y^2\Phi & \eps YZ\Phi - \frac{1}{2\eps}X\Phi \\
        \eps ZX\Phi + \frac{1}{2\eps}Y\Phi & \eps ZY\Phi - \frac{1}{2\eps}X\Phi & \eps^2 Z^2\Phi
    \end{pmatrix}
    \end{equation}
    with respect to the orthonormal frame $X, Y, \eps Z$. 
    
\emph{Case 1.} $\nabla_{\eps}\Phi(p_\eps) = 0$ for each $\eps > 0$. Then the definition of viscosity solution for $u_{\eps}$  guarantees the existence of some $\eta_\eps$ with $|\eta_\eps| \le 1$ such that
\[
 1 \ge -\Delta_{\eps} \Phi + {\nabla\nabla_{\eps}\Phi(\eta_\eps,\eta_{\eps})}\quad \text{at } p_\eps,
\] 
    Observe that $|v|_1 \leq |v|_\eps$ for any $v \in T_p\R^3$ and $p \in \R^3$. Thus,
    by compactness, up to a subsequence, $\eta_\eps \to \tilde\eta$ for some vector $\tilde\eta$ with $|\tilde\eta|_1 \le 1$. Let $\eta$ be the $g_1$-orthogonal projection of $\tilde\eta$ onto the horizontal distribution $\mathcal{D}$. Clearly, $\eta \in \mathcal{D}$ and its norm satisfies $|\eta| \le |\eta|_1 \le 1$. It thus suffices to plug $X\Phi(p_\eps) = Y\Phi(p_\eps) = Z\Phi(p_\eps) = 0$ to easily check that
    \begin{equation}
    \label{eq:limhess}
        \lim_{\eps\to0} \nabla_\eps\nabla_{\eps}\Phi(p_\eps)(\eta_\eps,\eta_\eps) = \nabla\grad\Phi(p)(\eta, \eta),
    \end{equation}
    and this implies that \eqref{eq:subsol_eta} is satisfied for such $\eta$.
    
    \emph{Case 2.} $\nabla_{\eps}\Phi(p_\eps) \neq 0$ for each $\eps > 0$. Then \eqref{eq:viscosubsol-eps} holds again. Let $\eta_\eps : = \nabla_\eps\Phi/|\nabla_\eps \Phi|$ computed at $p_\eps$. As before, let $\eta$ be the $g_1$-orthogonal projection of the limit $\tilde\eta$ of some subsequence of the $\eta_\eps$'s. Letting for notational simplicity $T^\eps = \nabla_\eps\nabla_\eps  \Phi$ computed at $p_\eps$, and accordingly $T = \nabla\nabla  \Phi$, we claim that $T^\eps (\eta_\eps, \eta_\eps)$ converges  to $T (\eta, \eta)$ for any $i, j = 1, 2$. On the other hand, the following identity holds, by direct computation.
    \[
    T^\eps(\eta_\eps, \eta_\eps) = 
    \begin{pmatrix}
       X^2\Phi & XY\Phi  & \eps XZ\Phi  \\
        YX\Phi  & Y^2\Phi & \eps YZ\Phi \\
        \eps ZX\Phi & \eps ZY\Phi  & \eps^2 Z^2\Phi
    \end{pmatrix}(\eta_\eps, \eta_\eps).
    \]
    This implies that \eqref{eq:limhess} again holds for this choice of $\eta_\eps$, and in turn that \eqref{eq:subsol_eta} is satisfied. \\

    This concludes the proof in the case where $p$ is a strict maximum point. In the general case where $p$ is a not necessarily strict maximum point of $u-\Phi$, we define the perturbed test function:
    $
        \tilde{\Phi} = \Phi + \psi,
    $
    where $\psi(q)$ is given by $\psi(q) =|p^{-1}q|^4 $. 
    Hence, $p$ is a strict maximum point for $u - \tilde{\Phi}$. 
    Moreover, since $\psi$ has a minimum at $p$, we have
    \begin{equation}
    \label{eq:condizionipsi}
        \nabla \psi(p) = 0 \quad \text{and} \quad \nabla\nabla \psi(p) = 0,
    \end{equation}
    Moreover, it also holds that 
    \begin{equation}
    \label{eq:laplpsi}
    \Delta \psi(p) = 0.
    \end{equation}
    Using that \eqref{eq:subsol} or \eqref{eq:subsol_eta} is satisfied for $\tilde{\Phi}$, and plugging in \eqref{eq:condizionipsi} and \eqref{eq:laplpsi} shows \eqref{eq:subsol} or \eqref{eq:subsol_eta} is respectively satisfied for $\Phi$. This concludes the proof of $u$ being  a viscosity subsolution.

    The function $u$ being a viscosity supersolution is shown in full analogy.
    \end{proof}

We are finally in a position to prove our main Theorem \ref{thm:existence}.

\begin{proof}[Proof of Theorem \ref{thm:existence}]
 Let $\Omega_\eps$ be given by Theorem \ref{thm:approx}, with $\Omega_\eps$ axisymmetric if assumption $(A)$ is in force and \(\Omega_\varepsilon=\Omega\) if assumption $(B)$ holds. 
 
 We further regularize $\Omega_\eps$ through a family of smooth $\{\Omega_\eps^i\}_i$ such that $\Omega_\eps^i \to \Omega_\eps$ in $C^2$ as $i \to +\infty$. Moreover, we let $\Omega_\eps^i$ be axisymmetric in the $(A)$ assumptions: this is straightforward, for example by standard mollification of the distance function from $\partial \Omega_\eps$. We have in particular $H_\eps^i  \geq c > 0$ for $c$ that does not depend on $\eps, i$. In the assumption $(B)$ this follows from the uniform convergence in~\eqref{eq:HepstoH}. Let $K \subset \Omega$ be compact. Then, for any $\eps > 0$ small enough and $i$ big enough, $K \subset \Omega_\eps^i$, as a consequence of the Hausdorff convergence $\partial \Omega_\eps \to \partial \Omega$. Let $u_\eps^i$ be the Riemannian mean curvature flow of $\Omega_\eps^i$ given by Theorem \ref{thm:mcfriem}. The estimates given by Propositions \ref{stimagradeps} and  Corollary \ref{cor:lip_viscosity_nonchar} do not depend on $i, \eps$, and they thus provide as $i \to +\infty$ a family of functions $u_\eps$ that is equibounded and equicontinuous with respect to $\eps$, on $K$. By the arbitrariness of $K$, $u_\eps$ converges up to subsequences to $u$ in $C^0_{\mathrm{loc}} (\Omega)$. By Proposition \ref{prop:limitsueps} $u$ is a viscosity solution to \eqref{eq:HMCFeq}. Choosing now $p \in \Omega$ and $q_\eps \in \partial \Omega_\eps$, the  propositions yield
 $
|u_\eps(p)| \leq C d^*(p, q_\eps),
$
where
$d^*$ denotes $d$ or $\tilde{d}$ according to the validity of $(A)$ or $(B)$.
Using $u_\eps \to u$ pointwise in $\Omega$ and once again $\partial \Omega_\eps \to \partial \Omega$ in Hausdorff distance, we obtain  $
|u(p)| \leq C d^*(p, q)
$
for any $p \in \Omega$ and $q \in \partial \Omega$. This implies that $u$ achieves continuously the Dirichlet datum $0$ at $\partial \Omega$, completing the proof.
 \end{proof}

\section{Mean cuvature blow-up at characteristic points}\label{s:blowup}

In this section we will prove results in turn leading to Theorem~\ref{cor:noselfsim}. 
In fact, we are showing that any time a horizontal mean curvature flow forms a foliation that is, roughly speaking, $C^2$, then the horizontal mean curvature of the evolving sets blows up at characteristic points, see Theorem~\ref{thm:hmcfoliation}

Since $u$ is in general only a continuous viscosity solution, $H$ cannot be evaluated pointwise: the blow-up is instead obtained via sup- and inf-convolutions, building $C^2$ test functions that touch $u$ from one side.

This blow-up is the key obstruction to self-similarity: a self-similar solution reproduces rescaled copies of $\partial\Omega$, whose curvature is uniformly bounded, away from its characteristic set, so no such solution can exist.

\subsection{Generalized $C^2$-foliation}
The following generalized notion of $C^2$-foliation was not considered before in literare, as far as we know. 

\begin{defn}[Generalized $C^2$-foliation]
\label{def:genC2foliation}
Let  $F = E_2 \setminus \overline{E_1}\subset\mathbb{R}^3$, where $E_1$ and $E_2$ are open bounded subsets with $C^2$ boundary such that $\overline{E_1}\subset E_2$. A family $\{\Sigma_\lambda\}_{\lambda\in \Lambda}$ of subsets of $\overline F$ is called a \emph{generalized $C^2$-foliation} of $\overline F$ if there exists a continuous function
\[
    R:\overline F\longrightarrow[0,\infty)
\]
such that:
\begin{enumerate}
\item[(i)] $\Lambda=[\lambda_{\min},\lambda_{\max}]$ for some $0\le\lambda_{\min}<\lambda_{\max}$, and $\partial E_1 = \Sigma_{\lambda_{\min}}, \partial E_2 = \Sigma_{\lambda_{\max}}$;
    \item[(ii)] for every $p\in\overline F$ there exists a unique $\lambda\in\Lambda$ such that $p\in\Sigma_\lambda  = \{p\in\overline F : R(p)=\lambda\}$;
    \item[(iii)] there exists an open, dense set $A\subseteq F$ such that $R\in C^2(A)$, $\nabla_{\mathrm{eucl}}R\neq0$ on $A$, and
    \[
    A\cap\Sigma_\lambda\neq\emptyset \qquad \text{for every } \lambda\in\Lambda \text{ with } \Sigma_\lambda\subset F;
    \]
    \item[(iv)] setting $F_\lambda = \{p\in\overline F : R(p)<\lambda\}$, one has
    \[
    \Sigma_\lambda = \partial (E_1 \cup F_\lambda)
    \]
\end{enumerate}
\end{defn}
Observe that a classical $C^2$-foliation would consist in $A = F$.

We find convenient to isolate the following geometrically meaningful quantities.

{
 \begin{defn}[Dilation generator and transversal set]\label{def:transversalset}
Let $D(x,y,z)=(x,y,2z)$ denote the infinitesimal generator of the anisotropic dilations $\delta_\lambda$, i.e.\ $D(p)=\frac{d}{d\lambda}\big|_{\lambda=1}\delta_\lambda(p)$. For a bounded open set $\Omega\subset\R^3$ with $C^1$ boundary, and $\nu_{\mathrm{eucl}}$ the continuous Euclidean outward unit normal to $\partial\Omega$, we define
\[
    G = \big\{\, p\in\partial\Omega \mid \langle D(p),\nu_{\mathrm{eucl}}(p)\rangle \neq 0 \,\big\}.
\]
\end{defn}}

The motivation behind the introduction of the above weakened version lies in the following observation.


\begin{lem}
\label{lem:dilation-foliation}
Let $\Omega\subset\R^3$ be a bounded open set with $C^2$ boundary $\partial\Omega$. For $\lambda\in(0,1]$ set
\[
    \Sigma_\lambda = \delta_\lambda(\partial\Omega), \qquad F_\lambda = \delta_\lambda(\Omega),
\]
and assume
\[
    \overline{F_{\lambda_1}} \subset F_{\lambda_2} \qquad \text{for every } 0<\lambda_1<\lambda_2\le1.
\]
Then, for any $0<\bar \lambda<1$, $\{\Sigma_\lambda\}_{\lambda\in[\bar \lambda,1]}$ is a generalized $C^2$-foliation of $\overline\Omega\setminus F_{\bar \lambda}$, with 
 \begin{equation}
 \label{eq:defA-dilated}
        A = \bigcup_{\lambda \in (\overline{\lambda},1)} \delta_\lambda(G).
    \end{equation}
\end{lem}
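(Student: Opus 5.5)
We define $R$ by setting $R(p)=\lambda$ whenever $p\in\Sigma_\lambda=\delta_\lambda(\partial\Omega)$, with $\lambda\in[\bar\lambda,1]$, on $\overline F=\overline\Omega\setminus F_{\bar\lambda}$. Take $E_2=\Omega$ and $E_1=F_{\bar\lambda}$; both are bounded open sets with $C^2$ boundary, since $\delta_{\bar\lambda}$ is a linear diffeomorphism. The hypothesis $\overline{F_{\bar\lambda}}\subset\Omega$ is the case $\lambda_2=1$ of the nesting assumption.

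\textbf{Well-definedness and continuity.} For $\lambda_1<\lambda_2$, the inclusion $\overline{F_{\lambda_1}}\subset F_{\lambda_2}$ gives $\Sigma_{\lambda_1}\cap\Sigma_{\lambda_2}=\emptyset$, so the $\Sigma_\lambda$ are pairwise disjoint. To show they cover $\overline F$, fix $p\in\overline F$ and set $\lambda^*=\inf\{\lambda\in[\bar\lambda,1]:p\in\overline{F_\lambda}\}$. This set contains $1$. The map $(\lambda,q)\mapsto\delta_\lambda q$ is continuous and $\overline\Omega$ is compact, so closedness gives $p\in\overline{F_{\lambda^*}}$. If $p\in F_{\lambda^*}$ with $\lambda^*>\bar\lambda$, then openness and continuity give $p\in F_\lambda$ for slightly smaller $\lambda$, contradicting minimality. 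If $\lambda^*=\bar\lambda$, then $p\notin F_{\bar\lambda}$ by definition of $\overline F$. In both cases $p\in\partial F_{\lambda^*}=\Sigma_{\lambda^*}$, which proves (i) and (ii).

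Continuity of $R$ follows from compactness. If $p_n\to p$ and $R(p_n)\to\mu$ along a subsequence, write $p_n=\delta_{R(p_n)}q_n$ with $q_n\in\partial\Omega$. Extracting a further subsequence, $q_n\to q\in\partial\Omega$, so $p=\delta_\mu q\in\Sigma_\mu$, and uniqueness gives $\mu=R(p)$. Property (iv) follows from $E_1\cup F_\lambda=\delta_\lambda(\Omega)$. Indeed, $F_{\bar\lambda}\subset\delta_\lambda\Omega$ by nesting, and $\{R<\lambda\}\cap\overline F=\delta_\lambda\Omega\setminus F_{\bar\lambda}$ by the same sweeping argument, so its boundary is $\Sigma_\lambda$.

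\textbf{Property (iii): regularity.} This is the main step. Consider $\Psi(\lambda,q)=\delta_\lambda q$ on $(\bar\lambda,1)\times G$, a $C^2$ map from a 3-manifold. At $q\in G$ its differential has image $d\delta_\lambda\big(T_q\partial\Omega\oplus\mathrm{span}\,D(q)\big)$, since $\partial_\lambda\delta_\lambda q=\lambda^{-1}D(\delta_\lambda q)=d\delta_\lambda\big(\lambda^{-1}D(q)\big)$. Because $\langle D(q),\nu_{\mathrm{eucl}}(q)\rangle\neq0$, $D(q)$ is transverse to $T_q\partial\Omega$, so this image is all of $\R^3$. Since $\Psi$ is injective by disjointness of the leaves, the inverse function theorem makes $\Psi$ a $C^2$ diffeomorphism onto the open set $A$. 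Hence $R=\mathrm{pr}_1\circ\Psi^{-1}$ is $C^2$ on $A$ with nonzero gradient, the gradient not vanishing because $R$ is a submersion coordinate. Moreover $A\cap\Sigma_\lambda=\delta_\lambda(G)$ for $\lambda\in(\bar\lambda,1)$.

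\textbf{Main obstacle: density of $A$ and nonemptiness of $G$.} Both need care. Differentiating the nesting condition in $\lambda$ gives $\langle D,\nu_{\mathrm{eucl}}\rangle\ge0$ on $\partial\Omega$. To show $G$ is dense in $\partial\Omega$, I would argue that if $\langle D,\nu_{\mathrm{eucl}}\rangle\equiv0$ on an open patch $U\subset\partial\Omega$, then $D$ is tangent to $U$. So $U$ is locally invariant under the flow $\delta_\lambda$, and $\delta_\lambda(U)\cap U\neq\emptyset$ for $\lambda$ near $1$, which contradicts the disjointness of the leaves. The same reasoning with $U=\partial\Omega$ shows $G\neq\emptyset$. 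Density of $G$ in $\partial\Omega$, together with surjectivity of $\Psi$ on $(\bar\lambda,1)\times\partial\Omega$ and its continuity, then gives density of $A$ in $F$. The leaves $\Sigma_\lambda\subset F$ are exactly those with $\lambda\in(\bar\lambda,1)$, so each meets $A$.
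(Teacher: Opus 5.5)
Your proposal is correct and follows essentially the same route as the paper. You define $R$ as the leaf parameter (well defined by disjointness from the nesting), obtain $C^2$ regularity on $\bigcup_{\lambda}\delta_\lambda(G)$ from the transversality of $D$, and get density of $G$ through the same flow-invariance contradiction. Your inverse-function-theorem argument for $(\lambda,q)\mapsto\delta_\lambda q$ is a repackaging of the paper's implicit-function argument for $g(\delta_{1/\lambda}p)$. Your sequential continuity proof, and your explicit verification of the covering and of property (iv), fill in steps the paper only asserts.
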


\begin{proof}
The assumed nesting, condition (iv) is satisfied.

    \textit{Step 1}
    Here we show properties (i) and (ii) of Definition \ref{def:genC2foliation},
    Observe first that
    \[
        \bigcup_{\lambda\in[\bar \lambda,1]}\Sigma_\lambda = \overline\Omega\setminus F_{\bar \lambda}.
    \]
By the strict nesting $\overline{F_{\lambda_1}}\subset F_{\lambda_2}$ for $\lambda_1<\lambda_2$, the boundaries $\Sigma_\lambda=\partial F_\lambda$ are pairwise disjoint. Hence every $p\in\overline\Omega\setminus F_{\bar \lambda}$ lies on exactly one leaf $\Sigma_\lambda$, and we may unambiguously define
    \[
        R:\overline\Omega\setminus F_{\overline{\lambda}} \to[0,1], \qquad R(p)=\lambda \iff p\in\Sigma_\lambda.
    \]
    For $0\le\alpha<\beta\le1$, we have
    \[
        R^{-1}\big((\alpha,\beta)\big) = F_\beta\setminus\overline{F_\alpha},
    \]
    which is an open set (the difference between the open set $F_\beta=\delta_\beta(\Omega)$ and the closed set $\overline{F_\alpha}$). Therefore, $R$ is continuous. In particular, $\Sigma_1=\partial\Omega=\{R=1\}$, and $F_\lambda=\{R<\lambda\}$ by construction; the same identification applied at $\lambda=\bar\lambda$ gives $\Sigma_{\bar\lambda}=\partial F_{\bar\lambda}=\{R=\bar\lambda\}$.
    
    Setting $F=\Omega\setminus\overline{F_{\bar\lambda}}$  we have $\overline{F_{\bar\lambda}}=F_{\bar\lambda}\cup\partial F_{\bar\lambda}=\{R<\bar\lambda\}\cup\{R=\bar\lambda\}=\{R\le\bar\lambda\}$, so
    \[
        F=\{R<1\}\setminus\{R\le\bar\lambda\}=\{\bar\lambda<R<1\}, \qquad \overline F=\{\bar\lambda\le R\le1\}=\overline\Omega\setminus F_{\bar\lambda},
    \]
    and consequently
    \[
        \partial F=\overline F\setminus F=\{R=\bar\lambda\}\cup\{R=1\}=\Sigma_{\bar\lambda}\cup\Sigma_1.
    \]
    This is exactly property (i) of Definition~\ref{def:genC2foliation} (with $\lambda_{\min}=\bar\lambda$, $\lambda_{\max}=1$); property (ii) is precisely the unique-leaf assignment constructed above.

    \textit{Step 2 (the transversal set is dense on $\partial\Omega$).}
    Let $\nu_{\mathrm{eucl}}$ be the continuous Euclidean unit normal to the $C^2$-boundary $\partial\Omega$. The relatively open set
    \[
        G = \big\{\, p\in\partial\Omega \mid \langle D(p),\nu_{\mathrm{eucl}}(p)\rangle \neq 0 \,\big\}
    \]
    is dense in $\partial\Omega=\Sigma_1$; in particular, $G\neq\emptyset$. Indeed, if $G$ were not dense in $\Sigma_1$, there would exist an open set $B\subset\Sigma_1$ such that $\langle D(p), \nu_{\mathrm{eucl}}(p) \rangle = 0$ for all $p\in B$. This would imply that for $\lambda$ sufficiently close to 1, $\delta_\lambda(p)$ would stay in $\partial \Omega$, contradicting the strict nesting property $\Sigma_\lambda \subset\subset \Omega$.

    \textit{Step 3.} Here, we show that the union of the dilated copies of the set $G$ defined in the previous step form the regular set $A$.
    Let $g: \R^3 \to \mathbb{R}$ be a $C^2$ defining function for $\Omega$, such that $\partial\Omega=\{p : g(p)=0\}$, $g<0$ in $\Omega$, and $\nabla_{\mathrm{eucl}}g \neq 0$ on $\partial\Omega$. 
    Define the map $\mathcal{F}: (\mathbb{H}\setminus\{0\}) \times (0,1] \to \mathbb{R}$ by
    \begin{equation}
        \mathcal{F}(p, \lambda) = g(\delta_{1/\lambda}(p)).
    \end{equation}
    By definition, $\Sigma_\lambda = \delta_\lambda(\partial\Omega) = \{p \in \mathbb{H}\setminus\{0\} \mid \mathcal{F}(p, \lambda) = 0\}$. Since $R(p)$ is the unique parameter such that $p \in \Sigma_{R(p)}$, it identically satisfies
    \begin{equation}\label{eq:implicit_R}
        \mathcal{F}(p, R(p)) = 0 \qquad \text{for all } p \in \overline\Omega\setminus F_{\bar \lambda}.
    \end{equation}
    To prove that $R$ is $C^2$ via the implicit function theorem, we compute $\partial_\lambda \mathcal{F}$. Fix $p \in \Sigma_\lambda$ and set $q = \delta_{1/\lambda}(p) \in \partial\Omega$. Recalling that $q = (x/\lambda, y/\lambda, z/\lambda^2) =: (x_q, y_q, z_q)$, the chain rule yields:
    \begin{align}
        \partial_\lambda \mathcal{F}(p, \lambda) &= \partial_x g(q)\left(-\frac{x}{\lambda^2}\right) + \partial_y g(q)\left(-\frac{y}{\lambda^2}\right) + \partial_z g(q)\left(-\frac{2z}{\lambda^3}\right) \\
        &= -\frac{1}{\lambda} \left( x_q \partial_x g(q) + y_q \partial_y g(q) + 2z_q \partial_z g(q) \right) \\
        &= -\frac{1}{\lambda} \langle D(q), \nabla_{\mathrm{eucl}} g(q) \rangle.
    \end{align}
    Now, assume that $q \in G$. By the definition of $G$, the generator $D(q)$ is not tangent to $\partial\Omega$, meaning $\langle D(q), \nabla_{\mathrm{eucl}} g(q) \rangle \neq 0$. Consequently, $\partial_\lambda \mathcal{F}(p, \lambda) \neq 0$. 
    
    By the implicit function theorem applied to \eqref{eq:implicit_R}, the function $R(p)$ is $C^2$ in a neighborhood of $p$. Furthermore, differentiating the identity $\mathcal{F}(p, R(p)) = 0$ with respect to the coordinates $p$ gives:
    \begin{equation}
        \nabla_{\mathrm{eucl}} R(p) = - \frac{\nabla_{\mathrm{eucl}} \mathcal{F}(p, \lambda)}{\partial_\lambda \mathcal{F}(p, \lambda)}.
    \end{equation}
    By the chain rule, $\nabla_{\mathrm{eucl}} \mathcal{F}(p, \lambda) = \left(\frac{\partial_x g(q)}{\lambda}, \frac{\partial_y g(q)}{\lambda}, \frac{\partial_z g(q)}{\lambda^2}\right)$. Since $\nabla_{\mathrm{eucl}} g(q) \neq 0$ (as $g$ is a defining function), we have $\nabla_{\mathrm{eucl}} \mathcal{F}(p, \lambda) \neq 0$, which implies $\nabla_{\mathrm{eucl}} R(p) \neq 0$.

Let now $A$ be defined by \eqref{eq:defA-dilated}.
   
    By the results of Step 3, $R \in C^2(A)$ and $\nabla_{\mathrm{eucl}} R \neq 0$ on $A$.
    Since $G$ is open in $\partial\Omega$ and $(\overline\lambda,1)$ is an open interval, $G\times(\overline\lambda,1)$ is open in $\partial\Omega\times(0,1]$; as $(q,\lambda)\mapsto\delta_\lambda(q)$ is a diffeomorphism onto its image, $A$ is open, and by construction $A\subseteq\Omega\setminus\overline{F_{\overline\lambda}}=:F$, as required by property (iii). Moreover, $G\times(\overline\lambda,1)$ is dense in $\partial\Omega\times[\overline\lambda,1]$, so $A$ is dense in $\overline\Omega\setminus F_{\overline\lambda}$.
    Finally, by construction, $A\cap\Sigma_\lambda=\delta_\lambda(G)\neq\emptyset$ for every $\lambda\in(\overline\lambda,1)$, i.e.\ for every leaf interior to $F$. This fulfills property (iii). Property (iv)  follows from the very construction of $R$, and concludes the proof.
\end{proof}

\begin{lem}\label{lem:almostfoliationsim}
    Let $\Omega \subset \R^3$ be an open, bounded set and let $u \in C^0(\overline{\Omega})$ be a solution to \eqref{HMCFprobl}. Assume that the level sets of $u$, $\{\Gamma_t = \{u=t\} \}_{t \in [t_1, t_2]}$, form a generalized $C^2$-foliation of the compact region $\Omega_{[t_1,t_2]} = \{p \in \overline{\Omega} \mid u(p) \in [t_1,t_2]\}$, where the foliation is parameterized by $\lambda \in [\lambda_1, \lambda_2]$. 
    
    Then, there exists a continuous function $R \in C^0(\Omega_{[t_1,t_2]}; [\lambda_1, \lambda_2])$, fulfilling the conditions of Definition \ref{def:genC2foliation}, and a strictly monotonic continuous function $f: [\lambda_1, \lambda_2] \to [t_1, t_2]$ such that:
    \begin{equation}
        u = f \circ R \quad \text{in } \Omega_{[t_1,t_2]}.
    \end{equation}
\end{lem}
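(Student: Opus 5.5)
The argument is essentially set-theoretic and topological: the foliation function $R$ already separates leaves, so $u$, being constant on leaves, must factor through $R$.

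\emph{Setting up $R$ and $f$.} By hypothesis the family $\{\Gamma_t\}_{t\in[t_1,t_2]}$ is a generalized $C^2$-foliation of $\overline F=\Omega_{[t_1,t_2]}$, indexed by $\lambda\in[\lambda_1,\lambda_2]$. Definition~\ref{def:genC2foliation} then supplies a continuous $R:\overline F\to[\lambda_1,\lambda_2]$ with $\Sigma_\lambda=\{R=\lambda\}$. The re-indexing $t\mapsto\lambda(t)$ with $\Gamma_t=\Sigma_{\lambda(t)}$ is well defined: each $\Gamma_t$ with $t\in[t_1,t_2]$ is nonempty, because $u$ is continuous on the connected region between $\Gamma_{t_1}$ and $\Gamma_{t_2}$ and so attains every intermediate value. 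Distinct $t$ give disjoint level sets, hence distinct $\lambda$. By property (ii) every leaf is some $\Gamma_t$ (the leaves exhaust $\overline F$, which is the union of the $\Gamma_t$), so $t\mapsto\lambda(t)$ is a bijection $[t_1,t_2]\to[\lambda_1,\lambda_2]$. Let $f$ be its inverse. Then for $p\in\overline F$ we have $p\in\Sigma_{R(p)}=\Gamma_{f(R(p))}$, so $u(p)=f(R(p))$.

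\emph{Continuity and monotonicity of $f$.} Pick any point $p_\lambda\in\Sigma_\lambda$. Then $f(\lambda)=u(p_\lambda)$, so continuity of $f$ is continuity of $u$ transported along leaves. I would argue through closed sets: for a closed set $C\subset[t_1,t_2]$, we have $f^{-1}(C)=R(u^{-1}(C)\cap\overline F)$. Since $\overline F$ is compact, $u^{-1}(C)\cap\overline F$ is compact, and its image under the continuous map $R$ is compact, hence closed. So $f$ is continuous.

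Now $f$ is a continuous bijection between compact intervals, so it is strictly monotone. This gives the claimed factorization $u=f\circ R$ on $\Omega_{[t_1,t_2]}$. If one wants the orientation explicitly, property (iv) identifies $\Sigma_{\lambda_{\min}}=\partial E_1$ as the inner boundary, and $u$ is larger inside, so $f$ is decreasing when $\lambda$ increases outward. The statement does not require this.

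\emph{Main obstacle.} The delicate step is confirming that each $\Gamma_t$ coincides with an entire leaf, rather than being a union of leaves or a portion of one. Being a union of several leaves is excluded because the foliation is indexed by $t$. Being only part of a leaf is excluded by property (ii) together with the fact that the $\Gamma_t$ partition $\overline F$.
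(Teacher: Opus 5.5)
Your proposal is correct and follows the paper's proof essentially step by step. You define $f$ by matching each leaf $\Sigma_\lambda$ with the level set $\Gamma_t$ it coincides with, and get $u=f\circ R$ and bijectivity from disjointness and nonemptiness. You get continuity of $f$ from compactness of $\Omega_{[t_1,t_2]}$ plus continuity of $u$ and $R$: you use the closed preimages $f^{-1}(C)=R(u^{-1}(C)\cap\overline F)$, while the paper uses a convergent subsequence of points $p_j\in\Sigma_{\lambda_j}$. Strict monotonicity then follows from continuous injectivity on an interval.

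One small caveat concerns nonemptiness of each $\Gamma_t$. Your justification via ``the connected region between $\Gamma_{t_1}$ and $\Gamma_{t_2}$'' is unsupported, since $\Omega_{[t_1,t_2]}$ need not be connected. It is better read off the hypothesis that every level set is a leaf: each leaf $\Sigma_\lambda=\partial(E_1\cup F_\lambda)$ is nonempty as the boundary of a nonempty bounded subset of $\R^3$. The paper also only asserts this point.
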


\begin{proof}
By Definition~\ref{def:genC2foliation}, the assumption that $\{\Gamma_t\}_{t\in[t_1,t_2]}$ is a generalized $C^2$-foliation of $\Omega_{[t_1,t_2]}$ provides a continuous function $R:\Omega_{[t_1,t_2]}\to[\lambda_1,\lambda_2]$ whose level sets $\Sigma_\lambda=\{R=\lambda\}$ coincide, as a family of subsets of $\Omega_{[t_1,t_2]}$, with the family $\{\Gamma_t\}_{t\in[t_1,t_2]}$: that is, for every $\lambda\in[\lambda_1,\lambda_2]$ there exists $t\in[t_1,t_2]$ such that $\Sigma_\lambda=\Gamma_t$.

Since the sets $\Gamma_t$ are pairwise disjoint and nonempty (being level sets of the single-valued function $u$ on $\Omega_{[t_1,t_2]}=u^{-1}([t_1,t_2])$), this $t$ is uniquely determined by $\lambda$: if $\Sigma_\lambda=\Gamma_{t}=\Gamma_{t'}$ with $t\neq t'$, then $\Gamma_t\cap\Gamma_{t'}\neq\emptyset$, contradicting disjointness. We may therefore define, unambiguously,
\[
    f:[\lambda_1,\lambda_2]\to[t_1,t_2], \qquad f(\lambda)=t \iff \Sigma_\lambda=\Gamma_t.
\]

By construction, for every $p\in\Omega_{[t_1,t_2]}$, letting $\lambda=R(p)$ we have $p\in\Sigma_\lambda=\Gamma_{f(\lambda)}$, i.e.\ $u(p)=f(\lambda)=f(R(p))$. Hence
\[
    u = f\circ R \quad \text{in } \Omega_{[t_1,t_2]}.
\]

We now prove that $f$ is a bijection. To prove the injectivity suppose that $f(\lambda_a)=f(\lambda_b)=t$, then $\Sigma_{\lambda_a}=\Gamma_t=\Sigma_{\lambda_b}$; since the sets $\Sigma_\lambda$ are pairwise disjoint and nonempty (each point of $\Omega_{[t_1,t_2]}$ lies on exactly one leaf, by definition of foliation), this forces $\lambda_a=\lambda_b$.

Moreover, for every $t\in[t_1,t_2]$, the level set $\Gamma_t$ is nonempty and, by hypothesis, coincides with some $\Sigma_\lambda$; hence $t=f(\lambda)$ for that $\lambda$, so $f$ is surjective.
We show that $f$ is also continuous. Suppose not: then there exist $\lambda,\lambda_j\in[\lambda_1,\lambda_2]$ with $\lambda_j\to\lambda$ as $j\to+\infty$ but $f(\lambda_j)\not\to f(\lambda)$, i.e.\ there is $\eta>0$ such that $|f(\lambda_j)-f(\lambda)|\ge\eta$ for every $j$. Let $p_j\in\{R=\lambda_j\}$. By compactness of $\Omega_{[t_1,t_2]}$, a further subsequence $p_{j_k}$ converges to some $p\in\Omega_{[t_1,t_2]}$. By continuity of $R$, $R(p)=\lim_k R(p_{j_k})=\lim_k\lambda_{j_k}=\lambda$. By continuity of $u$,
\[
f(\lambda_{j_k})=u(p_{j_k})\to u(p)=f(\lambda),
\]
contradicting $|f(\lambda_{j_k})-f(\lambda)|\ge\eta$ for all $k$. This proves that $f$ is continuous.


 It is a standard fact in real analysis that a continuous injective function on an interval is strictly monotonic. Hence $f$ is strictly monotonic on $[\lambda_1,\lambda_2]$.

This completes the proof: $R\in C^0(\Omega_{[t_1,t_2]};[\lambda_1,\lambda_2])$ and $A$ are as given by Definition~\ref{def:genC2foliation}, $f\in C^0([\lambda_1,\lambda_2];[t_1,t_2])$ is strictly monotonic, and $u=f\circ R$.
\end{proof}

\subsection{A viscosity approach to the curvature blow-up}
Let $u\in C^0(\bar \Omega)$ be a viscosity solution to \eqref{HMCFprobl} and let $\Gamma_t=\{p\in\overline{\Omega} : u(p)=t\}$. Moreover, we let $T^*=\max_{\bar{\Omega}} u$ be the \textit{extinction time}, representing the maximum time for which the mean curvature flow $\Gamma_t$ evolution is well-defined.
The main result of the section is the following.
\begin{thm}\label{thm:hmcfoliation}
    Let $\Omega\subset\R^3$ be an open, bounded set with $C^2$-boundary
    and $u\in C^0(\bar \Omega)$ be a solution to \eqref{HMCFprobl} which is locally Lipschitz with respect to the right-invariant sub-Riemannian distance $\tilde d$. Assume that the level sets $\{\Gamma_t\}_{t \in [t_1, t_2]}$ form a generalized $C^2$-foliation in $[t_1,t_2]\subset(0,T^*)$, with regular set $A$ as in Definition~\ref{def:genC2foliation}(iii).
    Then, for any $p_\mathrm{\mathrm{char}}\in A\cap\bigcup_{t\in[t_1,t_2]}\mathcal{C}(\Gamma_t)$ there exists $\{p_n\}_n\subset\Omega_{[t_1,t_2]}$ such that $p_n\to p_\mathrm{\mathrm{char}}$ and 
    \begin{equation}
        H_n(p_n)\to\infty \quad \text{as}\  n\to \infty,
    \end{equation}
    where $H_n$ is the horizontal mean curvature of the level $\Gamma_{\tau_n}$, with $\tau_n\in[t_1,t_2]$ such that $p_n\in \Gamma_{\tau_n}$.
\end{thm}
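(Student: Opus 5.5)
We argue by contradiction-free direct construction, exploiting the structure $u=f\circ R$ from Lemma~\ref{lem:almostfoliationsim}: near $p_{\mathrm{char}}$ the leaves are $C^2$ level sets of $R$, and at points where both $R$ is regular and $f$ is differentiable, the equation formally reads $1=H\,|\nabla u|=H\,f'(R)\,|\nabla R|$. Since $p_{\mathrm{char}}$ is characteristic for its leaf and $R\in C^2(A)$ with $\nabla_{\mathrm{eucl}}R\neq0$, we have $\nabla R(p_{\mathrm{char}})=0$ by~\eqref{eq:char_u}. Thus $|\nabla R|\to0$ near $p_{\mathrm{char}}$, and the whole point is to show that $f'$ stays bounded, so that $H=1/(f'|\nabla R|)$ must blow up.

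\textbf{Step 1: $f$ is locally Lipschitz near $\lambda_0=R(p_{\mathrm{char}})$.} Here the right-Lipschitz hypothesis enters. At $p_{\mathrm{char}}$ the Euclidean gradient of $R$ is nonzero and annihilates $X,Y$, so it is proportional to $\partial_z$ plus a combination forced by $X R=YR=0$. I expect that $\tilde\nabla R(p_{\mathrm{char}})\neq0$ because $\tilde X,\tilde Y$ differ from $X,Y$ by $\pm y\partial_z,\mp x\partial_z$. Concretely, $\tilde XR=XR+y\,\partial_zR$, and similarly for $\tilde Y$. If $p_{\mathrm{char}}$ lies on the $z$-axis this needs care; in that case I would instead use that $\tilde{\mathcal D}$ is bracket generating and pick a nearby point of $A$ on the same or nearby leaves where $\tilde\nabla R\neq0$, which is dense by Remark~\ref{rem:frobeniuschar}. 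Moving along an integral curve of $\tilde\nabla R/|\tilde\nabla R|$ on a small neighbourhood $U\subset A$ where $|\tilde\nabla R|\ge c>0$ changes $R$ by at least $c\,s$ over $\tilde d$-length $s$. The bound $|u(q)-u(q')|\le L\tilde d(q,q')$ then gives $|f(\lambda)-f(\lambda')|\le (L/c)|\lambda-\lambda'|$ for $\lambda,\lambda'$ near $\lambda_0$. Hence $f$ is Lipschitz there, differentiable a.e. (Rademacher), and $0\le |f'|\le L/c=:M$ wherever $f'$ exists.

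\textbf{Step 2: pointwise equation at good points.} Choose $\lambda_n\to\lambda_0$ at which $f$ is differentiable. Since $A$ is open and meets every interior leaf, and non-characteristic points are dense on $C^2$ leaves (Remark~\ref{rem:frobeniuschar}), pick $p_n\in A\cap\Sigma_{\lambda_n}$ non-characteristic with $p_n\to p_{\mathrm{char}}$. This uses openness of $A$ near $p_{\mathrm{char}}$ and continuity of $R$.

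At $p_n$, I test $u$ with $\Phi_\pm=f(\lambda_n)+(f'(\lambda_n)\pm\eta)(R-\lambda_n)\pm C(R-\lambda_n)^2$, or simply with $(f'(\lambda_n)\pm\eta)(R-\lambda_n)$ in a small neighbourhood, using the appropriate sign dictated by the monotonicity of $f$ and the side. Differentiability of $f$ at $\lambda_n$ gives one-sided touching up to $o(|R-\lambda_n|)$. The cleanest route is the sup/inf-convolution idea announced at the start of the section: replace $f$ by $C^2$ functions $g_\pm$ with $g_-\le f\le g_+$ near $\lambda_n$ and equality at $\lambda_n$, and with $g_\pm'(\lambda_n)=f'(\lambda_n)$. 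Such $g_\pm$ exist whenever $f$ is twice differentiable in the Peano sense; since $f$ is monotone and Lipschitz, Alexandrov-type a.e. second-order differentiability is not automatic, so I would choose $\lambda_n$ among Lebesgue points of $f'$ and use $g_\pm$ with slopes $f'(\lambda_n)\pm\eta$ and a quadratic correction.

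Plugging $\Phi=g_\pm\circ R$ into \eqref{eq:subsol}--\eqref{eq:supersol} (valid since $\nabla\Phi(p_n)=g_\pm'\nabla R\neq0$ when $f'(\lambda_n)\neq0$) gives
\[
g'_\pm(\lambda_n)\,|\nabla R(p_n)|\,H_n(p_n)+O(g''_\pm)\cdot 0 \;\gtrless\; 1 .
\]
The $g''$ term vanishes because it multiplies $|\nabla R|^2-\nabla R\otimes\nabla R(\nu,\nu)=0$. Letting $\eta\to0$ yields $1=f'(\lambda_n)|\nabla R(p_n)|H_n(p_n)$. In particular $f'(\lambda_n)\neq0$ is forced: if $f'(\lambda_n)=0$, the supersolution test gives $1\le0$, which is impossible.

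\textbf{Step 3: conclusion.} From Step 2, $H_n(p_n)=1/(f'(\lambda_n)|\nabla R(p_n)|)\ge 1/(M|\nabla R(p_n)|)$. Since $R\in C^2(A)$ and $\nabla R(p_{\mathrm{char}})=0$, we have $|\nabla R(p_n)|\to0$, so $H_n(p_n)\to\infty$.

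I expect the main obstacle to be Step 2: justifying the pointwise identity at points where $f$ is only differentiable. This requires building $C^2$ one-sided barriers in the $R$-variable with the correct slopes and handling the sign or orientation of $f$. Step 1 at axis points, where $\tilde\nabla R$ may also vanish, is a secondary difficulty.
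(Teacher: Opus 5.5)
Your Steps 1 and 3 follow the paper: you get Lipschitz continuity of $f$ by flowing along $\tilde\nabla R$ from a point of $\Sigma_{r_0}\cap A$ where $\tilde\nabla R\neq0$, and $\nabla R(p_{\mathrm{char}})=0$ forces $|\nabla R(p_n)|\to0$. Step~2, which you rightly flag as the crux, does not work as proposed.

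\emph{Slopes $f'(\lambda_n)\pm\eta$ are impossible.} Suppose $f$ is differentiable at $\lambda_n$ and a $C^1$ function $g$ satisfies $g\le f$ near $\lambda_n$ with equality at $\lambda_n$. Then $f-g$ has an interior minimum at a point of differentiability, so $g'(\lambda_n)=f'(\lambda_n)$. Hence no test function with slope $f'(\lambda_n)\pm\eta$ touches at $\lambda_n$. Touching on only one side of $\lambda_n$ does not help either: $R$ is a submersion near $p_n$, and $u-g\circ R$ must have an extremum in a full neighbourhood of $p_n$.

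\emph{The exact slope needs second-order control you do not have.} A $C^2$ lower touching function with slope $f'(\lambda_n)$ exists only if $f(\lambda)-f(\lambda_n)-f'(\lambda_n)(\lambda-\lambda_n)\ge -C(\lambda-\lambda_n)^2$. Neither differentiability nor the Lebesgue-point property of $f'$ gives this; $f(\lambda)=\lambda-|\lambda|^{3/2}$ at $0$ is a counterexample.

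\emph{The case $f'(\lambda_n)=0$ is mishandled.} You claim it gives $1\le0$, but then $\nabla\Phi(p_n)=0$ and the relevant condition is \eqref{eq:supersol_eta}. That condition only yields $1\le -g''(\lambda_n)\bigl(|\nabla R|^2-\langle\nabla R,\eta\rangle^2\bigr)$, which is no contradiction when every lower touching function has $g''(\lambda_n)$ sufficiently negative. Zero slopes must therefore be excluded by construction.

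The missing idea is to let the contact levels float and to use only the supersolution inequality. The paper takes the inf-convolution $f_s$, which is semiconcave and hence twice differentiable a.e. At such a point $\rho$ some $\varphi\in C^2$ touches $f_s$ from below. Let $m$ realize the infimum defining $f_s(\rho)$; then the translate $\psi(c)=\varphi(c+\rho-m)$ touches $f$ from below at $m$ with $\psi'(m)=f_s'(\rho)$ (Lemma~\ref{lem:prelimexpchar}). Strict monotonicity of $f$ prevents $f_s$ from being constant on any interval (Lemma~\ref{lem:strict_monotonicity_envelope}). So $\rho$ can be chosen arbitrarily close to $r_0$ with $f_s'(\rho)\neq0$, and the contact points $m=r_n$ converge to $r_0$ as $s\to0$. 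The bound $|\psi'(m)|\le M$ follows from Step~1; in fact any $C^1$ function touching an $M$-Lipschitz function from one side has slope at most $M$ in absolute value there. Testing the supersolution condition with $\psi_n\circ R$ at a non-characteristic $p_n\in\Sigma_{r_n}\cap A$ then gives $\bigl|\operatorname{div}\bigl(\nabla R/|\nabla R|\bigr)(p_n)\bigr|\ge 1/(M|\nabla R(p_n)|)$. That one-sided bound is all that is needed. The identity $1=f'(\lambda_n)|\nabla R(p_n)|H_n(p_n)$ you aim for is stronger than necessary, and your construction does not deliver it.
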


The proof of {Theorem \ref{thm:hmcfoliation}} proceeds by constructing a sequence of non-characteristic points approaching $p_\mathrm{\mathrm{char}}$ along which a quantitative lower bound forces the horizontal mean curvature to blow up. This requires pointwise evaluation of the horizontal mean curvature operator; however, the arrival time $u$ is only continuous in general, so in the end, we will perform such evaluations on suitable $C^2$ test functions that touch $u$ from one side.

Let $g: U \to \R$  be a  bounded function on a set $U\subset\R^3$, and let $s>0$. We define the \emph{sup-convolution} $g^s$ and the \emph{inf-convolution} $g_s$ as follows:
\begin{equation}\label{eq:subinfconv}
    g^s(p) = \sup_{q\in U} \biggl\{ g(q) - \frac{1}{s}|p-q|^2 \biggr\}\quad\text{and}\quad
    g_s(p) = \inf_{q\in U} \biggl\{ g(q) + \frac{1}{s}|p-q|^2 \biggr\},
\end{equation}
see e.g.~\cite[Chapter~3.5]{CannarsaSinestrari2004}.
Since the mapping $p \mapsto g^s(p) + \frac{1}{s}|p|^2$ is convex by construction, the regularized function $g^s$ is semi-convex (and analogously, $g_s$ is semi-concave), then by Alexandroff's Theorem $g_s$ and $g^s$ are twice differentiable almost everywhere.\\
Moreover, a standard fact is that assuming the domain to be compact implies that the minimizer in the inf-convolution converges to the evaluated point uniformly as $s\to 0$. In fact, letting $q_s\in\overline U$ realize the minimum in the definition of $g_s(p)$ (which exists, by continuity of $g$ and compactness of the domain), we have the quantitative estimate
\[
    |p - q_s| \;\le\; \sqrt{2Ms}, \qquad M=\sup_{\overline U}|g|<\infty,
\]
uniformly in $p$. Indeed, testing the infimum at $q=p$ gives $g_s(p)\le g(p)\le M$; on the other hand, since $g(q_s)\ge -M$,
\[
    g_s(p) = g(q_s) + \frac1s|p-q_s|^2 \ge -M + \frac1s|p-q_s|^2.
\]
Combining the two inequalities yields $|p-q_s|^2\le 2Ms$, and the claim follows by letting $s\to0$.

The following $1$-dimensional $C^2$-touching lemma ensures that, at any point of twice differentiability of the profile function, one can construct a valid global $C^2$ test function. It follows as a direct consequence of the general consistency result \cite[Corollary 12.14]{Calder2018}, specialized to the one-dimensional case.

\begin{lem}\label{lem:c2touching}
    Let $f: [a, b] \to \R$ be twice differentiable at $r_0 \in (a, b)$. Then, there exists a function $\varphi \in C^2([a, b])$ such that:
    \[
        \varphi(r_0) = f(r_0), \quad \varphi'(r_0) = f'(r_0), \quad \varphi''(r_0) = f''(r_0),
    \]
    and the difference $f - \varphi$ has a strict global minimum at $r_0$.
\end{lem}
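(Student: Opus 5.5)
The plan is to build $\varphi$ explicitly from the second-order Taylor polynomial of $f$ at $r_0$, corrected by a one-dimensional $C^2$ function that dominates the Taylor remainder from below. Set
\[
P(r)=f(r_0)+f'(r_0)(r-r_0)+\tfrac12 f''(r_0)(r-r_0)^2 ,
\]
so that twice differentiability at $r_0$ gives $f(r)-P(r)=o(|r-r_0|^2)$. Note that $f$ is only assumed twice differentiable at $r_0$, hence differentiable near $r_0$ and in particular continuous there. Away from $r_0$ we need $f$ bounded below on $[a,b]$; in the intended application $f$ is the continuous monotone profile from Lemma~\ref{lem:almostfoliationsim}, so I will assume $f$ is bounded (as implicit in \cite[Corollary 12.14]{Calder2018}).

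First, define the modulus
\[
\omega(\rho)=\sup_{0<|r-r_0|\le \rho,\ r\in[a,b]} \frac{\bigl(P(r)-f(r)\bigr)^+}{|r-r_0|^2}.
\]
This $\omega$ is nondecreasing, finite because $f$ is bounded, and satisfies $\omega(\rho)\to0$ as $\rho\to0^+$. The standard trick is then to replace $\omega$ by a continuous nondecreasing majorant $\bar\omega\ge\omega$ with $\bar\omega(0^+)=0$; for instance, one can take $\bar\omega(\rho)=\frac1\rho\int_\rho^{2\rho}\omega(s)\,ds$, extended suitably past $b-a$. Next, set
\[
\Theta(\rho)=\int_0^{\rho}\!\!\int_{\sigma}^{2\sigma}\bar\omega(s)\,ds\,d\sigma + \rho^4 .
\]
This function is $C^2$ on $[0,\infty)$ and satisfies $\Theta(\rho)\ge \rho^2\bar\omega(\rho)+\rho^4$, using monotonicity of $\bar\omega$. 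It also has $\Theta(0)=\Theta'(0)=\Theta''(0)=0$: indeed $\Theta'(\rho)=\int_\rho^{2\rho}\bar\omega+4\rho^3=o(\rho)$, and $\Theta''(\rho)=2\bar\omega(2\rho)-\bar\omega(\rho)+12\rho^2\to0$.

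Finally, define $\varphi(r)=P(r)-\Theta(|r-r_0|)$. Because $\Theta$ is even-extended with vanishing first and second derivatives at $0$, the function $\varphi$ is $C^2$ on $[a,b]$ and matches $f$ to second order at $r_0$. For $r\neq r_0$ we get
\[
f-\varphi=(f-P)+\Theta\ge -|r-r_0|^2\omega(|r-r_0|)+\Theta>0 ,
\]
which shows that $r_0$ is a strict global minimum of $f-\varphi$.

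The only delicate step is producing a $C^2$ minorant correction from the merely $o(\rho^2)$ modulus. The double averaging in the definition of $\Theta$ is exactly what upgrades the possibly discontinuous $\omega$ to something twice continuously differentiable. Alternatively, one can simply invoke \cite[Corollary 12.14]{Calder2018} in dimension one, as the text indicates.
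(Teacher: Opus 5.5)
Your route differs from the paper's, which gives no argument and simply invokes \cite[Corollary~12.14]{Calder2018} in dimension one. An explicit construction, the Taylor polynomial minus a $C^2$ radial correction, is a good self-contained substitute. Your regularity claims are correct: $\bar\omega$ is continuous, nondecreasing, $\bar\omega\ge\omega$, $\bar\omega(0^+)=0$, and $\Theta\in C^2([0,\infty))$ with $\Theta(0)=\Theta'(0)=\Theta''(0)=0$.

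The step that fails is the lower bound $\Theta(\rho)\ge\rho^2\bar\omega(\rho)+\rho^4$. Monotonicity only lets you bound $\bar\omega(s)$ below by $\bar\omega(\rho)$ for $s\ge\rho$. Such $s$ enter the double integral only for $\sigma\in[\rho/2,\rho]$, so what you actually get is
\[
\int_0^\rho\!\!\int_\sigma^{2\sigma}\bar\omega(s)\,ds\,d\sigma\;\ge\;\bar\omega(\rho)\int_{\rho/2}^{\rho}(2\sigma-\rho)\,d\sigma\;=\;\frac{\rho^2}{4}\,\bar\omega(\rho).
\]
This loss is real, not an artifact of the estimate. Suppose $P-f$ vanishes near $r_0$ except for a narrow bump of height $c\rho_1^2$ at $r_0+\rho_1$. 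Then $\omega$ is essentially the step function equal to $0$ on $(0,\rho_1)$ and to $c$ on $[\rho_1,\infty)$. A direct computation gives $\Theta(\rho_1)\approx\frac38 c\rho_1^2+\rho_1^4$, so $(f-\varphi)(r_0+\rho_1)\approx-\frac58 c\rho_1^2+\rho_1^4<0$ for $c$ large, and your final display is false. The fix is to put a factor $4$ in front of the double integral. This changes neither the $C^2$ regularity nor the vanishing of $\Theta,\Theta',\Theta''$ at $0$. It gives $\Theta(\rho)\ge\rho^2\bar\omega(\rho)+\rho^4\ge\rho^2\omega(\rho)+\rho^4$, hence $f-\varphi\ge|r-r_0|^4>0$ for $r\neq r_0$.

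With that correction your argument gives more than the citation. You rightly observe that a lower bound on $f$ is needed; as literally stated the lemma is false. For example, $f(r)=-1/(r-a)$ on $(a,b]$ with $f(a)=0$ has no bounded $\varphi$ below it, so no global minimum of $f-\varphi$ at $r_0$ is possible. In the paper the lemma is applied not to the profile $f$ but to its inf-convolution $f_s$ in the proof of Lemma~\ref{lem:prelimexpchar}. That function is Lipschitz on $[a,b]$, so your extra hypothesis is harmless there. Your construction uses only the expansion $f=P+o(|r-r_0|^2)$, so it covers Alexandrov-type twice differentiability, which is what Alexandroff's theorem supplies for $f_s$. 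Your side remark that $f$ is differentiable near $r_0$ is neither needed nor available in that setting. Finally, you obtain strictness and globality of the minimum explicitly rather than by reference.
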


\begin{lem}\label{lem:prelimexpchar}
    Let $f:[a,b]\to\R$ be a strictly monotone continuous function, and let $r\in(a,b)$. Assume $f$ is Lipschitz continuous on a neighborhood of $r$, with Lipschitz constant $L>0$. 
    Then there exist a sequence $(r_n)_{n\in\N}\subset(a,b)$, with $r_n\to r$ 
    as $n\to\infty$, and a sequence $(\psi_n)_{n\in\N}\subset C^2([a,b])$ such that, for every $n$, $f-\psi_n$ has a local minimum at $r_n$ and
    \[
        0 < |\psi_n'(r_n)| \le L, \qquad \forall n.
    \]
\end{lem}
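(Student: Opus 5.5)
We will use the inf-convolution $f_s$ of $f$ on a small interval around $r$, combined with Alexandroff's theorem and Lemma~\ref{lem:c2touching}. Without loss of generality $f$ is strictly increasing; the decreasing case follows by applying the argument to $-f$ and flipping signs. Fix $\rho>0$ with $[r-2\rho,r+2\rho]\subset(a,b)$ and $f$ $L$-Lipschitz on $U=[r-2\rho,r+2\rho]$. The plan is to work with $f_s$ on $U$, with $s$ small, and to produce points near $r$ where $f_s$ is twice differentiable with derivative in $(0,L]$. At such points we touch from below with a $C^2$ function, and transfer the touching to $f$ through the minimizer of the inf-convolution.

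\textbf{Step 1: properties of $f_s$.} We check that $f_s$ is semi-concave, nondecreasing and $L$-Lipschitz on the interior of $U$, at least near $r$. Monotonicity and the Lipschitz bound are inherited by taking the infimum of functions $p\mapsto f(q)+\frac1s|p-q|^2$. More cleanly, the Lipschitz bound follows from the standard fact that the minimizer $q_s(p)$ satisfies $\frac{2}{s}|p-q_s|\le L$ whenever $q_s$ is interior; this gives $|p-q_s|\le Ls/2$, which improves the $\sqrt{2Ms}$ estimate. Monotonicity follows because, for $p<p'$, shifting the competitor $q$ by $p'-p$ and using that $f$ is increasing gives $f_s(p)\le f_s(p')$.

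\textbf{Step 2: a good point.} By Alexandroff's theorem, $f_s$ is twice differentiable a.e. We must exclude $f_s'=0$ a.e. on $(r-1/n,r+1/n)$. If that held, $f_s$ would be constant there, since it is Lipschitz. But $f_s(p)=f(q_s(p))+\frac1s|p-q_s(p)|^2$ with $|p-q_s|\le Ls/2$, so $|f_s(p)-f(p)|\le CLs$. Strict monotonicity of $f$ gives $f(r+1/n)-f(r-1/n)>0$, so choosing $s=s_n$ small makes $f_s$ non-constant on that interval. Hence the set of points of twice differentiability with $0<f_s'\le L$ has positive measure, and we pick such a point $\bar r_n$ in $(r-1/n,r+1/n)$.

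\textbf{Step 3: transfer to $f$.} Lemma~\ref{lem:c2touching} gives $\varphi_n\in C^2$ with $f_s-\varphi_n$ having a minimum at $\bar r_n$ and $\varphi_n'(\bar r_n)=f_s'(\bar r_n)\in(0,L]$. Let $r_n=q_s(\bar r_n)$, so that $|r_n-\bar r_n|\le Ls/2$ and $r_n\to r$. Define $\psi_n(q)=\varphi_n(q+\bar r_n-r_n)-\frac1s|\bar r_n-r_n|^2$. For $q$ near $r_n$, put $p=q+\bar r_n-r_n$. Then
\[
f(q)=f(q)+\tfrac1s|p-q|^2-\tfrac1s|\bar r_n-r_n|^2\ge f_s(p)-\tfrac1s|\bar r_n-r_n|^2\ge \psi_n(q)+\bigl(f_s-\varphi_n\bigr)(\bar r_n),
\]
with equality at $q=r_n$. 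Therefore $f-\psi_n$ has a local minimum at $r_n$, and $\psi_n'(r_n)=\varphi_n'(\bar r_n)\in(0,L]$. Finally we extend $\psi_n$ to a $C^2$ function on $[a,b]$ by cutoff, which does not affect the local statement.

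The main obstacle is Step 2, ensuring a nonzero derivative at a twice-differentiable point. This is handled by the measure argument via uniform closeness $|f_s-f|=O(s)$ combined with strict monotonicity of $f$. A secondary concern is keeping the minimizer interior to $U$, which is why we localize near $r$ and take $s$ small.
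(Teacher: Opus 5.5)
Your proof is correct. It shares the paper's skeleton: inf-convolution, Alexandroff's theorem, Lemma~\ref{lem:c2touching} at a point of twice differentiability, and transfer to $f$ by translating the touching function by $\bar r_n-r_n$, which is exactly the paper's transfer step. The step that guarantees $f_s'\neq0$ is done differently.

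The paper proves Lemma~\ref{lem:strict_monotonicity_envelope}: for \emph{every} $s>0$, the inf-convolution of a strictly monotone function is non-constant on every open interval, because otherwise the minimizer would be forced to be $m_l=l$. Hence the good set $E_s$ is dense and one can simply take $s=1/n$. The paper then gets the bound $|f_{\sigma_n}'|\le L$ from a shift argument, using the crude localization $|p-q_s|\le\sqrt{2Ms}$ to keep the minimizers in the Lipschitz neighborhood.

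You instead use the first-variation estimate $|p-q_s|\le Ls/2$. This is valid because the segment $[q_s,p]$ lies in $U$. That single estimate gives you two things at once:
\begin{itemize}
\item the derivative bound $|f_s'(\bar r_n)|=\tfrac2s|\bar r_n-q_s(\bar r_n)|\le L$;
\item the uniform closeness $0\le f-f_s\le L^2s/2$ on $U$.
\end{itemize}
You then need non-constancy only on $(r-1/n,r+1/n)$, which you obtain by choosing $L^2s_n$ below the oscillation of $f$ there. This automatically forces $s_n\to0$, which you need for $r_n\to r$. Your route is more elementary and needs no auxiliary lemma, but it ties $s_n$ to the modulus of strict monotonicity of $f$ at scale $1/n$. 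The paper's lemma instead gives density of good points for each fixed $s$.

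One correction: your reduction to increasing $f$ ``by applying the argument to $-f$ and flipping signs'' fails as stated. A local minimum of $-f-\psi$ is a local maximum of $f+\psi$, so it gives touching from above, not below. No reduction is actually needed. For strictly decreasing $f$ the same argument gives $f_s$ nonincreasing, hence $\psi_n'(r_n)=f_s'(\bar r_n)\in[-L,0)$. Alternatively, reduce via $x\mapsto f(-x)$, which preserves touching from below and $|\psi_n'|$.
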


In the proof of this Lemma we need a technical result about inf-convolutions of strictly decreasing functions, which is the following.

\begin{lem}\label{lem:strict_monotonicity_envelope}
Let $f: [a,b] \to \mathbb{R}$ be a continuous, strictly monotone function, and let $s > 0$. The inf-convolution $f_s$ cannot be constant on any open interval $I \subset [a,b]$.
\end{lem}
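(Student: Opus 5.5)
Without loss of generality assume $f$ is strictly decreasing; the increasing case is identical up to sign. Fix $s>0$ and argue by contradiction. Suppose $f_s\equiv c$ on an open interval $I\subset[a,b]$. For each $p\in I$ choose a minimizer $q_p\in[a,b]$ in the definition of $f_s(p)$; such a minimizer exists by continuity of $f$ and compactness of $[a,b]$. Then
\[
c=f(q_p)+\tfrac1s|p-q_p|^2 .
\]
The plan is to show that the minimizers are monotone in $p$, and then that this forces $f$ to be constant somewhere, which contradicts strict monotonicity.

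\textbf{Step 1: monotonicity of minimizers.} Take $p_1<p_2$ in $I$ with minimizers $q_1,q_2$. Comparing the minimality of $q_1$ at $p_1$ with the competitor $q_2$, and the minimality of $q_2$ at $p_2$ with the competitor $q_1$, and adding the two inequalities, gives
\[
|p_1-q_1|^2+|p_2-q_2|^2\le |p_1-q_2|^2+|p_2-q_1|^2 .
\]
Expanding, this is equivalent to $(p_2-p_1)(q_2-q_1)\ge0$, so $q_1\le q_2$. In particular every selection $p\mapsto q_p$ is nondecreasing.

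\textbf{Step 2: $f_s$ is strictly decreasing, hence not constant.} A cleaner route avoids the case analysis on minimizers. For $p_1<p_2$ in $I$, use the competitor $q_2$ at $p_1$:
\[
f_s(p_1)\le f(q_2)+\tfrac1s|p_1-q_2|^2 .
\]
If $q_2\ge p_2$, then $|p_1-q_2|>|p_2-q_2|$, and this bound does not directly help. Instead I will use the competitor $q_2'=q_2-(p_2-p_1)$ at $p_1$; this is the shifted minimizer, and it lies in $[a,b]$ whenever $q_2-(p_2-p_1)\ge a$. It gives
\[
f_s(p_1)\le f(q_2')+\tfrac1s|p_2-q_2|^2 .
\]
Since $f$ is strictly decreasing and $q_2'<q_2$, we have $f(q_2')>f(q_2)$, so this estimate goes the wrong way.

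So I switch direction and compare at $p_2$ using the minimizer $q_1$ of $p_1$, shifted forward: $q_1'=q_1+(p_2-p_1)$. Provided $q_1'\le b$,
\[
f_s(p_2)\le f(q_1')+\tfrac1s|p_1-q_1|^2<f(q_1)+\tfrac1s|p_1-q_1|^2=f_s(p_1).
\]
This contradicts $f_s(p_1)=f_s(p_2)=c$.

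\textbf{Main obstacle: the boundary case $q_1'>b$.} It remains to handle $q_1+(p_2-p_1)>b$, that is, $q_1$ is within $p_2-p_1$ of $b$. Since $I$ is open, I can shrink the pair: take $p_2$ arbitrarily close to $p_1$. If $q_1<b$, then $q_1'\le b$ for $p_2$ near enough to $p_1$, and the argument above applies. If $q_1=b$ for every $p_1\in I$, then $f_s(p)=f(b)+\tfrac1s|p-b|^2$ on $I$. Because $p<b$ on $I$, this function is strictly decreasing in $p$, so it cannot be constant on $I$.

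This covers all cases and gives the contradiction, so $f_s$ is not constant on any open interval. The only delicate point is this endpoint bookkeeping; the shifted-competitor inequality is the core of the proof.
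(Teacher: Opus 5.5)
Your proof is correct, but it takes a different route from the paper's.

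\emph{The paper's argument.} The paper never moves the minimizer. It fixes $l\in I$ with minimizer $m_l$ and uses $m_l$ as a competitor at every other $l'\in I$. Constancy $f_s(l')=f_s(l)$ then gives $|m_l-l|\le|m_l-l'|$ for all $l'\in I$. Since $l$ is interior to $I$, this forces $m_l=l$, so $f=f_s\equiv C$ on $I$, which contradicts strict monotonicity.

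\emph{Your argument.} You translate the minimizer together with the base point, $q_1'=q_1+(p_2-p_1)$. The quadratic penalty is then unchanged, and the strict decrease of $f$ gives $f_s(p_2)<f_s(p_1)$ directly.

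\emph{What each buys.} The paper's version is shorter and symmetric: it never uses the direction of monotonicity. It also has no endpoint issue, because the competitor $m_l$ is always admissible. Yours needs the increasing/decreasing case split and the bookkeeping at $b$. In exchange, it proves something stronger without ever using constancy as a hypothesis: $f_s$ inherits strict monotonicity from $f$. In fact you can drop the ``shrink $p_2$'' case analysis. When $q_1+(p_2-p_1)>b$, you have $p_1-q_1<p_2-b\le 0$ and $f(b)\le f(q_1)$, so the competitor $b$ at $p_2$ gives $f_s(p_2)\le f(b)+\tfrac1s|p_2-b|^2<f_s(p_1)$. Hence $f_s$ is strictly decreasing on all of $[a,b]$.

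In a final write-up, drop Step~1, which is never used, and the two abandoned competitors at the start of Step~2.
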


\begin{proof}
Assume by contradiction that there exists an open interval $I  \subset (a,b)$ and a constant $C \in \mathbb{R}$ such that $f_s(l) = C$ for all $l \in I$.

For any $l \in I$, let $m_l \in [a,b]$ be a point achieving the minimum, so that
\begin{equation}\label{eq:costinfconv}
    C = f_s(l) = f(m_l) + \frac{1}{s}(m_l - l)^2.
\end{equation}
For any other point $l' \in I$, since $f_s(l')$ is the global minimum for the parameter $l'$, evaluating the functional at $m_l$ yields the inequality:
\begin{equation}
     f_s(l') \le f(m_l) + \frac{1}{s}(m_l - l')^2. 
\end{equation}
Using the assumption that $f_s(l') = C$, we have:
\begin{equation}
    C \le f(m_l) + \frac{1}{s}(m_l - l')^2.
\end{equation}

Plugging \eqref{eq:costinfconv} into the inequality above, we obtain:
\begin{equation}
    C \le C - \frac{1}{s}(m_l - l)^2 + \frac{1}{s}(m_l - l')^2.
\end{equation}
Simplifying $C$ and multiplying by $s > 0$ yields:
\begin{equation}
    (m_l - l)^2 \le (m_l - l')^2. 
\end{equation}
Expanding the squares, and factorizing we get:
\begin{equation}
    (l - l')(l + l') \le 2m_l(l - l'). 
\end{equation}

This inequality must hold for any choice of $l, l' \in I$. Taking  the limit 
     $l' \to l^-$ we obtain  $m_l \ge l$, while taking the right limit we obtain the reverse inequality, concluding that
     $m_l = l$ for all $l \in I$. Finally, substituting $m_l = l$ back into \eqref{eq:costinfconv}, we find:
$$ C = f(m_l) + \frac{1}{s}(m_l - l)^2 = f(l) + \frac{1}{s}(l - l)^2 = f(l). $$
This implies that $f(l) = C$ for all $l \in I$, which contradicts the assumption that $f$ is strictly monotone on $[a,b]$. Therefore, the initial assumption must be false, and $f_s$ cannot be constant on any open interval.
\end{proof}

Now we give the proof of Lemma \ref{lem:prelimexpchar}.

\begin{proof}[Proof of Lemma~\ref{lem:prelimexpchar}]
    For $s>0$, consider the inf-convolution $f_s:[a,b]\to\mathbb R$ defined in \eqref{eq:subinfconv}, which is Lipschitz continuous and twice differentiable almost everywhere.

    Let $\rho\in(a,b)$ be a point of twice differentiability of $f_s$. Owing to the continuity of $f$ and the compactness of the domain, the infimum defining $f_s(\rho)$ is always attained, so that we let $m\in[a,b]$ satisfying
    \begin{equation}\label{eq:pippi}
        f_s(\rho) = f(m) + \frac1s(\rho-m)^2.
    \end{equation}
    By Lemma~\ref{lem:c2touching}, there exists $\varphi\in C^2([a,b])$ such that $f_s-\varphi$ attains a global minimum at $\rho$. Thus, for $l\in(a,b)$, $c\in[a,b]$,
    \begin{equation}\label{eq:minfunztesttouch}
        f_s(\rho)-\varphi(\rho) \;\le\; f_s(l)-\varphi(l) \;\le\; f(c)+\frac1s(l-c)^2-\varphi(l).
    \end{equation}
    Plugging \eqref{eq:pippi} into the latter and choosing $c\in[a,b]$ such that $l=c+\rho-m\in(a,b)$, the quadratic terms cancel:
    \begin{equation}\label{eq:stimavisclocblowup}
        f(m)-\varphi(\rho) \le f(c)-\varphi(c+\rho-m).
    \end{equation}
    Since $\rho$ is an interior point, define a neighborhood $\mathcal U_m$ of $m$ small enough that $\rho+c-m\in(a,b)$ for every $c\in\mathcal U_m$, and set $\psi(c)=\varphi(c+\rho-m)$ for $c\in\mathcal U_m$; then $\psi\in C^2(\mathcal U_m)$ extends to a function in $C^2([a,b])$. Since $\psi(m)=\varphi(\rho)$, \eqref{eq:stimavisclocblowup} gives
    \[
        f(m)-\psi(m) \le f(c)-\psi(c) \qquad \forall c\in\mathcal U_m,
    \]
    so $m$ is a local minimum of $f-\psi$, with $\psi'(m)=\varphi'(\rho)=f_s'(\rho)$.

     For every fixed $s>0$, the set
    \[
        E_s = \{\rho\in(a,b) : f_s \text{ is twice differentiable at } \rho, \ f_s'(\rho)\neq0\}
    \]
    is dense in $(a,b)$. Indeed, let $I\subset(a,b)$ be an arbitrary open interval. Since $f_s$ is Lipschitz on $[a,b]$, it is absolutely continuous on $\overline I$, so for $\xi_0,\xi\in I$, $f_s(\xi)-f_s(\xi_0)=\int_{\xi_0}^\xi f_s'(t)\,dt$. If $f_s'=0$ a.e.\ on $I$, then $f_s$ is constant on $I$, contradicting Lemma~\ref{lem:strict_monotonicity_envelope} (using strict monotonicity of $f$). Hence $\{t\in I: f_s'(t)\neq0\}$ has positive measure; intersecting with the full-measure set of points of twice differentiability (Alexandroff's theorem) shows $E_s\cap I\neq\emptyset$. As $I$ was arbitrary, $E_s$ is dense in $(a,b)$.

    Set $\sigma_n=1/n$. By density of $E_{\sigma_n}$, we may choose
    \[
        \rho_n \in E_{\sigma_n} \cap \bigl(r-\tfrac1n,\, r+\tfrac1n\bigr),
    \]
    so that $\rho_n\to r$.
    Then, the construction above provides $r_n \in [a, b]$ and $C^2$ functions $\psi_n$ such that $f-\psi_n$ has a local minimum at $r_n$, with
    \[
        \psi_n'(r_n) = \varphi_n'(\rho_n) = f_{\sigma_n}'(\rho_n) \neq 0.
    \] 
    Recall that such $r_n$ realizes the minimum in the definition of $f_{\sigma_n}(\rho_n)$, and consequently
    $r_n\to r$ (see the discussion preceding Lemma \ref{lem:c2touching}).
     Since $f$ is $L$-Lipschitz on a neighborhood $\mathcal V$ of $r$, and $\rho_n,r_n\to r$, for $n$ large enough $\rho_n,r_n\in\mathcal V$. Fix such $n$, and let $l,l'\in\mathcal V$; taking $c=c^*+(l-l')$ as a competitor in the definition of $f_{\sigma_n}(l)$, where $c^*$ realizes the minimum for $f_{\sigma_n}(l')$, the quadratic terms cancel as in \eqref{eq:stimavisclocblowup}, yielding
    \[
        f_{\sigma_n}(l)-f_{\sigma_n}(l') \le f(c^*+(l-l'))-f(c^*) \le L|l-l'|.
    \]
   Then, $f_{\sigma_n}$ is $L$-Lipschitz near $r$, with constant independent of $n$. Hence $|f_{\sigma_n}'(\rho_n)|\le L$ for $n$ large enough, so
    \[
        |\psi_n'(r_n)| = |f_{\sigma_n}'(\rho_n)| \le L, \qquad \forall n.
    \]
\end{proof}

We now give the proof of Theorem~\ref{thm:hmcfoliation}.
The first step is to show that $f$ is Lipschitz: moving along a curve transverse to the leaves and using the local right-Lipschitz continuity of $u$, this regularity transfers to $f$. A one-dimensional touching argument then produces test functions $\psi_n$ touching $f$ at levels arbitrarily close to the characteristic one $p_{\mathrm{\mathrm{char}}}$, with uniformly controlled slope; testing $\psi_n\circ R$ on a suitable sequence of nearby noncharacteristic points $p_n \to p_{\mathrm{\mathrm{char}}}$, shows that along $p_n$, the horizontal mean curvature of the leaf $p_n$ belongs to blows up while approaching $p_{\mathrm{\mathrm{char}}}$.

\begin{proof}[Proof of Theorem \ref{thm:hmcfoliation}]
    Let $u=f\circ R$ where $f\in C^0([R_1,R_2];[t_1,t_2])$,\\ $R\in C^0(\Omega_{[t_1,t_2]};[R_1,R_2])$ and $A$ the open dense set as in Lemma~\ref{lem:almostfoliationsim}.
    For $r\in[R_1,R_2]$, we let 
    \begin{equation}
        \Sigma_r=\Gamma_{f(r)}.
    \end{equation}

    Suppose $m \in (R_1,R_2)$ and $\psi \in C^2([R_1,R_2])$ are such that $f-\psi$ has a local minimum at $m$, with $\psi'(m)\neq 0$. Let $\bar p \in \Sigma_m\cap A$ (i.e.\ $R(\bar p)=m$) be such that $\nabla R(\bar p) \neq 0$. Set $\Psi = \psi\circ R \in C^2$ in an neighborhood of $\overline{p}\in A$.
    
    Since $f-\psi$ has a local minimum at $m$, there is a neighborhood $\mathcal U_m \subset (R_1,R_2)$ of $m$ such that
    \[
        f(m) - \psi(m) \;\le\; f(c) - \psi(c) \qquad \forall\, c \in \mathcal U_m.
    \]
    By continuity of $R$, there is a neighborhood $\mathcal V_{\bar p}$ of $\bar p$ such that $R(p) \in \mathcal U_m$ for all $p \in \mathcal V_{\bar p}$. Evaluating the previous inequality at $c = R(p)$ and using $u = f\circ R$, $\Psi = \psi\circ R$, and $R(\bar p)=m$:
    \[
        u(\bar p) - \Psi(\bar p) = f(m) - \psi(m) \;\le\; f(R(p)) - \psi(R(p)) = u(p) - \Psi(p) \qquad \forall\, p \in \mathcal V_{\bar p},
    \]
    i.e.\ $u - \Psi$ has a local minimum at $\bar p$.
    
    By the chain rule, $\nabla \Psi = \psi'(m)\,\nabla R$; since $\psi'(m)\neq 0$ and $\nabla R(\bar p)\neq 0$, we get $\nabla\Psi(\bar p) \neq 0$. Since $u$ is a viscosity supersolution of \eqref{HMCFprobl} and $\Psi\in C^2$ touches $u$ from below at $\bar p$ with $\nabla\Psi(\bar p)\neq0$, the definition of viscosity supersolution and the identity~\eqref{eq:H} give
    \begin{equation}\label{eq:supersp1}
    \begin{split}
    1 &\le -|\nabla \Psi(\bar{p})| \operatorname{div} \left( \frac{\nabla \Psi}{|\nabla \Psi|} \right)(\bar{p}) \\
    &= -|\psi'(m)| |\nabla R(\bar{p})| \operatorname{div} \left( \frac{\psi'(m)}{|\psi'(m)|} \frac{\nabla R}{|\nabla R|} \right)(\bar{p}) \\
    &= -\psi'(m) |\nabla R(\bar{p})| \operatorname{div} \left( \frac{\nabla R}{|\nabla R|} \right)(\bar{p}).
    \end{split}
    \end{equation}

    Fix $p_{\mathrm{char}}\in\Omega_{[t_1,t_2]}$ with $R(p_{\mathrm{char}})=r_0\in (R_1,R_2)$ such that $p_{\mathrm{char}}\in \mathcal C(\Sigma_{r_0})$; by hypothesis, $p_{\mathrm{char}}\in A$.

    \noindent\textit{Step 1 ($f$ is locally Lipschitz near $r_0$)} We show that $f$ is locally Lipschitz near $r_0$. Since the right-invariant distribution $\tilde{\mathcal D}$ satisfies the same Hörmander (bracket-generating) condition as its left-invariant counterpart, the Frobenius Theorem guarantees that the right-characteristic locus of any $C^2$-hypersurface has relatively empty interior. We may therefore select a strictly regular point $q^*$ on $\Sigma_{r_0}\cap A$, where $\tilde\nabla R(q^*)\neq0$, since $\Sigma_{r_0}\cap A$ is not empty by assumption and, since $A$ is open, $\Sigma_{r_0}$ is a $C^2$ hypersurface by the implicit function theorem. In a compact neighborhood $\mathcal U\subset A$ of $q^*$, we define the $C^1$-smooth vector field
    \[
        \tilde W = \frac{\tilde\nabla R}{|\tilde\nabla R|^2}.
    \]
    In particular, there exists $K>0$ such that $|\tilde W|=1/|\tilde\nabla R|\le 1/K$ on $\mathcal U$. 
    
    For $r_1,r_2$ near $r_0$, flowing along $\tilde W$ connects a point $q_1\in\Sigma_{r_1}\cap\mathcal U$ to $q_2\in\Sigma_{r_2}\cap\mathcal U$ along a curve $\gamma$ which is tangent to $\tilde{\mathcal D}$ at every point and the support of $\gamma$ is contained in $A$. Since $\tilde\nabla R\cdot\tilde W \equiv 1$, the flow time required is
    \begin{equation}
        T=\int_0^T\tilde\nabla R(\gamma(t))\cdot\tilde W(\gamma(t))\;dt=\int_0^T(R\circ\gamma)'(t)\;dt =|r_1-r_2|.
    \end{equation}
    Hence, we have 
    \begin{equation}
        \int_0^T |\dot\gamma(t)|\,dt = \int_0^T |\tilde W(\gamma(t))|\,dt \le \Big(\sup_{\mathcal U}|\tilde W|\Big)\,T \le  \frac{|r_1-r_2|}{K}.
    \end{equation}
    By~\eqref{eq:CCdist} for the right invariant structure, this implies: $\tilde d(q_1,q_2)\le C|r_1-r_2|$, with $C=1/K$.
    Using the local Lipschitz continuity of $u$ with respect to $\tilde d$, and calling $L$ the Lipschitz constant on $\mathcal U$ we therefore obtain that 
    \begin{equation}\label{eq:lipstimatempo}
        |f(r_1)-f(r_2)| = |u(q_1)-u(q_2)| \le L\,\tilde d(q_1,q_2) \le L C\,|r_1-r_2|,
    \end{equation}
    i.e., $f$ is Lipschitz near $r_0$ with constant $M=L C$.

    \noindent\textit{Step 2 (touching sequence).} By Step~1, $f$ is Lipschitz near $r_0$ with constant $M$. Lemma~\ref{lem:prelimexpchar}, applied to $f$ with this Lipschitz constant $M$ and $r=r_0$, gives a sequence $r_n\to r_0$ and $\psi_n\in C^2([R_1,R_2])$ such that $f-\psi_n$ has a local minimum at $r_n$, with
    \[
        0 < |\psi_n'(r_n)| \le M \qquad \forall n.
    \]

    \noindent\textit{Step 3 (non-characteristic points).} Having produced, in Step~2, a sequence $r_n\to r_0$ of contact levels, we now need to exhibit, for each $n$, an actual point $p_n$ close to $p_{\mathrm{char}}$, on the leaf $\Sigma_{r_n}$, which is non-characteristic. Since $p_{\mathrm{char}}\in A$ already, the strategy is direct: flow from $p_{\mathrm{char}}$ itself for a short time to land exactly on $\Sigma_{r_n}$, then perturb slightly within $\Sigma_{r_n}\cap A$ to reach a non-characteristic point.

    Since $p_{\mathrm{char}}\in A$ and $A$ is open, there exists $\rho>0$ such that the closed Euclidean ball $\overline{B}(p_{\mathrm{char}},\rho)\subset A$. Since $R\in C^2(A)$ with $\nabla_{\mathrm{eucl}}R\neq0$ on $A$, the vector field
    \[
        W_{\mathrm{eucl}} = \frac{\nabla_{\mathrm{eucl}} R}{|\nabla_{\mathrm{eucl}} R|_{\mathbb R^3}^2}
    \]
    is well defined and of class $C^1$ on $\overline{B}(p_{\mathrm{char}},\rho)$, hence bounded there; set
    \[
        M_W = \sup_{\overline{B}(p_{\mathrm{char}},\rho)} |W_{\mathrm{eucl}}|_{\mathbb R^3} \;<\;\infty.
    \]
    Let $\gamma:(-\tau_0,\tau_0)\to\mathbb R^3$ be the (Euclidean) integral curve of $W_{\mathrm{eucl}}$ with $\gamma(0)=p_{\mathrm{char}}$, defined for as long as it remains in $\overline B(p_{\mathrm{char}},\rho)$. Since $\nabla_{\mathrm{eucl}}R\cdot W_{\mathrm{eucl}}\equiv1$,
    \begin{equation}\label{eq:propcurvaeuclr}
        \frac{d}{d\tau}\, R(\gamma(\tau)) = \nabla_{\mathrm{eucl}} R(\gamma(\tau))\cdot W_{\mathrm{eucl}}(\gamma(\tau)) = 1, \qquad\text{so}\qquad R(\gamma(\tau)) = r_0 + \tau,
    \end{equation}
    and, as long as $\gamma(\tau)\in\overline B(p_{\mathrm{char}},\rho)$,
    \begin{equation}\label{eq:flowbound}
        |\gamma(\tau)-p_{\mathrm{char}}|_{\mathbb R^3} \;\le\; \int_0^{|\tau|} |\dot\gamma(\sigma)|_{\mathbb R^3}\,d\sigma \;\le\; M_W\,|\tau|.
    \end{equation}
    In particular, for $|\tau|<\rho/M_W$ the trajectory has not left $B(p_{\mathrm{char}},\rho)\subset A$.

    Since $r_n\to r_0$, there is $\bar n$ such that $|r_n-r_0|<\rho/M_W$ for every $n\ge\bar n$; for such $n$, set
    \[
        q_n = \gamma(r_n-r_0) \in B(p_{\mathrm{char}},\rho)\subset A.
    \]
    By \eqref{eq:propcurvaeuclr}, $R(q_n)=r_n$, i.e.
    \[
        q_n \in \Sigma_{r_n}\cap A \neq \emptyset,
    \]
    and by \eqref{eq:flowbound},
    \[
        |q_n-p_{\mathrm{char}}|_{\mathbb R^3} \;\le\; M_W\,|r_n-r_0| \;\longrightarrow\; 0.
    \]

    Since $\Sigma_{r_n}\cap A$ is a non-empty $C^2$-hypersurface (implicit function theorem, using $R\in C^2(A)$, $\nabla_{\mathrm{eucl}}R\neq0$ on $A$), the Frobenius-type argument shows that the set of points where $\nabla R\neq0$ (horizontal, left-invariant gradient) is dense in $\Sigma_{r_n}\cap A$. We may therefore choose
    \[
         p_n \in (\Sigma_{r_n}\cap A)\setminus\mathcal C(\Sigma_{r_n}), \qquad | p_n - q_n|_{\mathbb R^3} < \frac1n.
    \]
    By the triangle inequality,
    \[
        | p_n - p_{\mathrm{char}}|_{\mathbb R^3} \le | p_n-q_n|_{\mathbb R^3} + |q_n-p_{\mathrm{char}}|_{\mathbb R^3} \longrightarrow 0,
    \]
    so $ p_n\to p_{\mathrm{char}}$, with $ p_n\in A$ non-characteristic for every $n$, i.e.\ $\nabla R( p_n)\neq0$; moreover $R\in C^2$ in a neighborhood of $ p_n$ (since $ p_n\in A$ open), so $\Psi_n=\psi_n\circ R$ is an admissible $C^2$ test function near $ p_n$.

        Since $\psi_n'(r_n)\neq0$ and $\nabla R( p_n)\neq0$, inequality~\eqref{eq:supersp1} holds at $ p_n$:
    \[
        1 \;\le\; -\psi_n'(r_n)\,|\nabla R(p_n)|\,\operatorname{div}\!\Big(\frac{\nabla R}{|\nabla R|}\Big)( p_n)
        \;\le\; |\psi_n'(r_n)|\,|\nabla R( p_n)|\,\Big|\operatorname{div}\!\Big(\frac{\nabla R}{|\nabla R|}\Big)( p_n)\Big|.
    \]
    Since $0<|\psi_n'(r_n)|\le M$ (Step~2), this yields the quantitative lower bound
    \begin{equation}\label{eq:quantlowerbound}
        \Big|\operatorname{div}\!\Big(\frac{\nabla R}{|\nabla R|}\Big)( p_n)\Big| \;\ge\; \frac{1}{|\psi_n'(r_n)|\,|\nabla R( p_n)|} \;\ge\; \frac{1}{M\,|\nabla R( p_n)|}.
    \end{equation}

    \medskip
        \noindent\textit{Step 4 (conclusion).} Since $p_{\mathrm{char}}\in A$ and $A$ is open, $R\in C^2$ in a full neighborhood of $p_{\mathrm{char}}$; as $p_n\to p_{\mathrm{char}}$, continuity of $\nabla R$ on this neighborhood gives $\nabla R(p_n)\to\nabla R(p_{\mathrm{char}})$. Since $R\in C^2$ near $p_{\mathrm{char}}$ with $\nabla_{\mathrm{eucl}}R\neq0$ there, $R$ is itself an admissible defining function for $\Sigma_{r_0}$ near $p_{\mathrm{char}}$; as the characteristic set does not depend on the choice of defining function and $p_{\mathrm{char}}\in\mathcal C(\Sigma_{r_0})$, this gives $\nabla R(p_{\mathrm{char}})=0$. Hence $|\nabla R(p_n)|\to0$, and \eqref{eq:quantlowerbound} gives directly
    \[
        \Big|\operatorname{div}\!\Big(\frac{\nabla R}{|\nabla R|}\Big)(p_n)\Big| \;\ge\; \frac{1}{M\,|\nabla R(p_n)|} \;\longrightarrow\;+\infty.
    \]
    Setting $\tau_n:=f(r_n)\in[t_1,t_2]$ (so that $p_n\in\Sigma_{r_n}=\Gamma_{\tau_n}$) this gives
    \[
        H_n(p_n)=\Big|\operatorname{div}\Big(\frac{\nabla R}{|\nabla R|}\Big)(p_n)\Big| \to \infty
        \quad\text{as } n \to \infty,
    \]
    which is the conclusion of Theorem~\ref{thm:hmcfoliation}.
\end{proof}

\subsection{Self-similar solutions}


We first observe that self similar solutions immediately yield a generalized foliation.
\begin{lem}
\label{thm:selfsimfoli}
Let $u$ be a self-similar solution to \eqref{HMCFprobl}, with $\lambda:[0,T^*)\to(0,1]$ satisfying \eqref{eq:defselfsimlambda}. Set $\Gamma_t=\{p \in \overline{\Omega} \mid u(p)=t\}$. Then. for every $\bar t\in(0,T^*)$, the level sets $\{\Gamma_t\}_{t\in[0,\bar t]}$ form a generalized $C^2$-foliation of $\overline\Omega\setminus F_{\lambda(\bar t)}$.
\end{lem}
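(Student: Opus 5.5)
The plan is to reduce to Lemma~\ref{lem:dilation-foliation}, applied with $\bar\lambda=\lambda(\bar t)$. Its conclusion is that $\{\delta_\lambda(\partial\Omega)\}_{\lambda\in[\bar\lambda,1]}$ is a generalized $C^2$-foliation of $\overline\Omega\setminus F_{\bar\lambda}$. By \eqref{eq:defselfsimlambda}, $\Gamma_t=\delta_{\lambda(t)}(\partial\Omega)$, so the two families coincide once we know that $t\mapsto\lambda(t)$ maps $[0,\bar t]$ onto $[\bar\lambda,1]$. The only hypothesis of Lemma~\ref{lem:dilation-foliation} that needs checking is the strict nesting $\overline{F_{\lambda_1}}\subset F_{\lambda_2}$ for $\bar\lambda\le\lambda_1<\lambda_2\le1$. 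Once this holds, the foliation function $R$ from that lemma is the leaf parameter; reparametrizing the leaves by $t$ through the bijection $\lambda(\cdot)$ changes nothing in Definition~\ref{def:genC2foliation}.

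The first step is to show that $\lambda$ is strictly decreasing and continuous on $[0,\bar t]$ with $\lambda(0)=1$. Continuity and $\lambda(0)=1$ are built into \eqref{eq:selfsimilar-scaling-function}. For injectivity, note that distinct level sets of $u$ are disjoint and nonempty. Hence $t\neq t'$ forces $\delta_{\lambda(t)}(\partial\Omega)\neq\delta_{\lambda(t')}(\partial\Omega)$, and so $\lambda(t)\neq\lambda(t')$. A continuous injective map on an interval is strictly monotone. Since $\lambda(0)=1\ge\lambda(t)$, it is strictly decreasing, and by the intermediate value theorem its image is exactly $[\bar\lambda,1]$.

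The second step, which is the main obstacle, is the nesting. Here I would use the superlevel sets of $u$. Since $u=0$ on $\partial\Omega$, $u>0$ in $\Omega$ and $u$ is continuous, I expect $\{u>t\}$ to be the open region enclosed by $\Gamma_t=\delta_{\lambda(t)}(\partial\Omega)$, that is, $\delta_{\lambda(t)}(\Omega)=F_{\lambda(t)}$. To argue this, I would note that $\delta_{\lambda(t)}$ is a homeomorphism of $\R^3$, so $\delta_{\lambda(t)}(\partial\Omega)$ separates $\R^3$ into $\delta_{\lambda(t)}(\Omega)$ and the exterior of $\delta_{\lambda(t)}(\overline\Omega)$. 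The set $\{u>t\}$ is open, has boundary inside $\Gamma_t$, and avoids $\Gamma_t$, so it is a union of such components intersected with $\Omega$.

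To pin down which components, I would use that $u$ attains $T^*>t$ inside. The exterior part of $\Omega$ adjacent to $\partial\Omega$, where $u$ is near $0<t$, cannot lie in $\{u>t\}$. A connectedness argument along the monotone family of leaves then gives $\{u>t\}=F_{\lambda(t)}$. The key is that points with $u$ slightly below $t$ must lie on leaves $\delta_{\lambda}(\partial\Omega)$ with $\lambda>\lambda(t)$.

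With this identification, for $t_1<t_2$ one gets $\overline{F_{\lambda(t_2)}}\subset\{u\ge t_2\}\subset\{u>t_1\}=F_{\lambda(t_1)}$, which is exactly the nesting, since $\lambda(t_2)<\lambda(t_1)$. If the topological identification proves delicate, I would instead argue directly that each dilated copy $\delta_\mu(\overline\Omega)$ is contained in $\{u\ge t(\mu)\}$, using continuity of $\lambda$ and the fact that $\delta_\mu\to$ identity as $\mu\to1$.

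Finally, I would invoke Lemma~\ref{lem:dilation-foliation} with $A=\bigcup_{\lambda\in(\bar\lambda,1)}\delta_\lambda(G)$, which concludes the proof.
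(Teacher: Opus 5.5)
Your proposal follows the paper's proof: identify $\Gamma_t=\delta_{\lambda(t)}(\partial\Omega)$, deduce the strict nesting of the dilated copies from the fact that they are level sets of $u$, and apply Lemma~\ref{lem:dilation-foliation} with $\bar\lambda=\lambda(\bar t)$. You also supply the strict monotonicity of $\lambda$, needed for $\lambda([0,\bar t])=[\lambda(\bar t),1]$, which the paper uses only implicitly.

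The identification $\{u>t\}=F_{\lambda(t)}$, which you only sketch, closes without any analysis of components. Indeed, $\{u>t\}=\bigcup_{s\in(t,T^*]}\Gamma_s=\delta_{\lambda(t)}\bigl(\{u>0\}\bigr)=\delta_{\lambda(t)}(\Omega)$. This uses two facts: $u>0$ in $\Omega$, and $\Gamma_{T^*}=\{0\}$. Both hold because a supersolution cannot have an interior local minimum (test with constants), combined with $\lambda(s)\to0$ as $s\to T^*$.
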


\begin{proof}
By definition of self-similar solution, $\Gamma_t=\delta_{\lambda(t)}(\partial\Omega)=\Sigma_{\lambda(t)}$ for every $t\in[0,T^*)$, with $\lambda$ continuous, so the family $\{\Gamma_t\}_{t\in[0,\bar t]}$ coincides, as a family of sets, with $\{\Sigma_\lambda\}_{\lambda\in[\lambda(\bar t),1]}$. The hypersurfaces $\Sigma_\lambda=\delta_\lambda(\partial\Omega)$ satisfy the strict nesting $\overline{F_{\lambda_1}}\subset F_{\lambda_2}$ for $\lambda_1<\lambda_2$, since distinct time-slices of a solution to \eqref{HMCFprobl} are disjoint and nested, being level sets of $u$. The claim then follows directly from Lemma~\ref{lem:dilation-foliation}, applied with $\bar\lambda=\lambda(\bar t)$.
\end{proof}

We are then interested in understanding whether the characteristic points of a self-similar generalized foliation are in fact regular. We start with the following simple observation.

{
\begin{lem}\label{lem:charpointsinG}
Let $\Omega\subset\R^3$ be a bounded open set with $C^2$ boundary, and let $g:\R^3\to\R$ be a $C^2$ defining function for $\Omega$ as in Lemma~\ref{lem:dilation-foliation}, i.e. $\partial\Omega=\{g=0\}$ and $\nabla_{\mathrm{eucl}}g\neq0$ on $\partial\Omega$. If $p=(x,y,z)\in\mathcal C(\partial\Omega)$, then $p$ belongs to the set $G$ of Definition~\ref{def:transversalset} if and only if $z\neq0$.
\end{lem}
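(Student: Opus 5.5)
At a characteristic point $p=(x,y,z)\in\mathcal C(\partial\Omega)$ we have $Xg(p)=Yg(p)=0$, by \eqref{eq:char_u} applied to the defining function $g$. Since $\nabla_{\mathrm{eucl}}g(p)\neq0$, this pins down the Euclidean normal direction completely. Writing $X g=\partial_x g-\frac y2\partial_z g$ and $Yg=\partial_y g+\frac x2\partial_z g$, the two conditions give
\[
\partial_x g(p)=\tfrac y2\,\partial_z g(p),\qquad \partial_y g(p)=-\tfrac x2\,\partial_z g(p).
\]
Then $\partial_z g(p)\neq0$, since otherwise all three partials would vanish, contradicting $\nabla_{\mathrm{eucl}}g\neq0$. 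Consequently $\nabla_{\mathrm{eucl}}g(p)=\partial_zg(p)\,\bigl(\tfrac y2,-\tfrac x2,1\bigr)$, and $\nu_{\mathrm{eucl}}(p)$ is a nonzero multiple of this vector.

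Next we pair this normal with the dilation generator $D(p)=(x,y,2z)$:
\[
\langle D(p),\nabla_{\mathrm{eucl}}g(p)\rangle=\partial_zg(p)\Bigl(\tfrac{xy}{2}-\tfrac{xy}{2}+2z\Bigr)=2z\,\partial_zg(p).
\]
The horizontal terms cancel exactly. Since $\partial_zg(p)\neq0$ and $\nu_{\mathrm{eucl}}$ is a nonzero multiple of $\nabla_{\mathrm{eucl}}g$, the inner product $\langle D(p),\nu_{\mathrm{eucl}}(p)\rangle$ is nonzero if and only if $z\neq0$. By Definition~\ref{def:transversalset}, this is exactly the statement that $p\in G$ if and only if $z\ne 0$.

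There is essentially no obstacle here. The only points needing care are that the characteristic-point condition \eqref{eq:char_u} is applied to the given defining function $g$ (legitimate, since $\nabla_{\mathrm{eucl}}g\neq0$ on $\partial\Omega$), and the observation that $\partial_zg(p)\neq0$ at characteristic points. Everything else is a one-line computation.
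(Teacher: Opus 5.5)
Your proposal is correct and follows essentially the same route as the paper: you use $Xg(p)=Yg(p)=0$ to write $\nabla_{\mathrm{eucl}}g(p)=\partial_zg(p)\bigl(\tfrac y2,-\tfrac x2,1\bigr)$ with $\partial_zg(p)\neq0$, and then compute $\langle D(p),\nabla_{\mathrm{eucl}}g(p)\rangle=2z\,\partial_zg(p)$. You also make explicit that $\nu_{\mathrm{eucl}}$ is a nonzero multiple of $\nabla_{\mathrm{eucl}}g$, which the paper leaves implicit.
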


\begin{proof}
Since $p\in\mathcal C(\partial\Omega)$, $Xg(p)=Yg(p)=0$, i.e.\
\[
g_x(p)=\tfrac y2\,g_z(p), \qquad g_y(p)=-\tfrac x2\,g_z(p),
\]
so that
\[
\nabla_{\mathrm{eucl}}g(p) = g_z(p)\left(\tfrac y2,-\tfrac x2,1\right).
\]
As $\partial\Omega$ is $C^2$-regular, $\nabla_{\mathrm{eucl}}g(p)\neq0$; since the third component of $\left(\tfrac y2,-\tfrac x2,1\right)$ equals $1\neq0$, this forces $g_z(p)\neq0$. Consequently,
\[
\langle D(p),\nabla_{\mathrm{eucl}}g(p)\rangle
= g_z(p)\left\langle (x,y,2z),\left(\tfrac y2,-\tfrac x2,1\right)\right\rangle
= g_z(p)\left(\tfrac{xy}2-\tfrac{xy}2+2z\right)
= 2z\,g_z(p).
\]
Since $g_z(p)\neq0$, this vanishes if and only if $z=0$. By definition of $G$, this proves the claim.
\end{proof}}

We can finally give the proof of Theorem \ref{cor:noselfsim}.

\begin{proof}[Proof of Theorem \ref{cor:noselfsim}]
Let $u\in C^0(\bar \Omega)$ be a right-Lipschitz solution of~\eqref{HMCFprobl}  with initial datum $\Omega$. Assume by contradiction that $u$ is self-similar in the sense of Definition~\ref{defn:ss} and let  $\lambda:[0,T^*)\to(0,1]$ be the continuous function satisfying~\eqref{eq:defselfsimlambda}.

Fix $\bar t\in(0,T^*)$ and $\eta>0$ such that $\bar t+\eta<T^*$. 

By Lemma~\ref{thm:selfsimfoli}, the level sets $\{\Gamma_t\}_{t\in[0,\bar t+\eta]}$ form a generalized $C^2$-foliation of $\overline\Omega\setminus F_{\lambda(\bar t+\eta)}$, with regular set $A=\bigcup_{\lambda\in(\lambda(\bar t+\eta),1)}\delta_\lambda(G)$ (Lemma~\ref{lem:dilation-foliation}, applied with $\bar\lambda=\lambda(\bar t+\eta)$).

Moreover, by Corollary \ref{cor:axisimcarpoint}, the characteristic set $\mathcal{C}(\partial \Omega)$ consists of two points $p_1, p_2$ on $z$-axis. Both $p_1$ and $p_2$ are distinct from the origin: if $0\in\partial\Omega$, since $\delta_\lambda(0)=0$ for every $\lambda>0$, the origin would belong to $\Gamma_t=\delta_{\lambda(t)}(\partial\Omega)$ for every $t\in[0,T^*)$, contradicting the disjointness of distinct level sets of $u$. In particular, $p_1$ and $p_2$ are contained in the set $G$ of Definition \ref{def:transversalset}, and thus by Lemma \ref{lem:dilation-foliation} the characteristic points of $\Gamma_t$, given by $\delta_{\lambda(t)}(p_1)$ and $\delta_{\lambda(t)}(p_2)$ are contained in the regular set $A$.

By Theorem~\ref{thm:hmcfoliation} applied to $p_{\mathrm{\mathrm{char}}}\in A\cap\mathcal C(\Gamma_{\bar t})$, there exists a sequence $\{p_n\}_n\subset\Omega_{[0,\bar t+\eta]}$ such that
\[
    p_n \to p_{\mathrm{\mathrm{char}}}, \qquad H_n(p_n)\to\infty \quad\text{as } n\to\infty,
\]
where $H_n(p_n)$ denotes the horizontal mean curvature of the leaf $\Gamma_{\tau_n}$ through $p_n$, for some $\tau_n\in[0,\bar t+\eta]$.

 Since $\Gamma_{\tau_n}=\delta_{\lambda(\tau_n)}(\partial\Omega)$, write $p_n=\delta_{\lambda_n}(q_n)$ for a unique $q_n\in\partial\Omega$, where $\lambda_n=\lambda(\tau_n)$. The anisotropic dilation scales the horizontal mean curvature as
\[
    H(\Gamma_{\tau_n})(p_n) = \lambda_n^{-1}\, H(\partial\Omega)(q_n).
\]
Since the initial datum has bounded curvature, $H(\partial\Omega)\le c$ on $\partial\Omega\setminus\mathcal C(\partial\Omega)$, we obtain
\[
    H_n(p_n) = |H(\Gamma_{\tau_n})(p_n)| \le \frac{c}{\lambda_n}.
\]
Since $\tau_n\in[0,\bar t+\eta]$ and $\lambda$ is continuous and strictly positive on the compact interval $[0,\bar t+\eta]\subset[0,T^*)$, we have $\lambda_n\ge\min_{[0,\bar t+\eta]}\lambda =: \lambda_{\min}>0$ for every $n$, hence
\[
    H_n(p_n) \le \frac{c}{\lambda_{\min}} < \infty \qquad \text{for every } n,
\]
uniformly in $n$. This contradicts $H_n(p_n)\to\infty$.
\end{proof}

\bibliographystyle{abbrv}
\bibliography{references.bib}

\end{document}